\theoremstyle{plain}
\newtheorem{theorem}{Theorem}[section]
\newtheorem{proposition}[theorem]{Proposition}
\newtheorem{corollary}[theorem]{Corollary}
\newtheorem{lemma}[theorem]{Lemma}
\theoremstyle{definition}
\newtheorem{definition}[theorem]{Definition}
\newtheorem*{ack}{Acknowledgement}
\theoremstyle{remark}
\newtheorem*{remark*}{Remark}
\newtheorem{remark}[theorem]{Remark}
\newtheorem{remarks}[theorem]{Remarks}
\newcommand\numberthis{\addtocounter{equation}{1}\tag{\theequation}}
\numberwithin{equation}{section}
\def\1{\textbf{\textnormal{1}}}
\def\E{{\textnormal E}}
\def\Pr{{\textnormal P^*}}
\def\P{{\textnormal P}}
\def\CLT{\textnormal{CLT}}
\def\supl{\sup\limits}
\def\R{{\mathbb R}}
\def\N{{\mathbb N}}
\begin{document}
\title{A CLT for weighted time-dependent uniform empirical processes}
\author{Yuping Yang}
\address{School of Mathematics and Statistics, Southwest University, Chongqing 400715, People's Republic of China}
\email{yangyuping@gmail.com}
\subjclass[2000]{Primary 60F05; secondary 60F17}
\keywords{Weighted empirical processes, Central limit theorem, Convergence of stochastic processes}
\begin{abstract}
  For a uniform process $\{ X_t: t\in E\}$ (by which $X_t $ is uniformly distributed on $(0,1)$ for $t\in E$)
  and a function $w(x)>0$ on $(0,1)$, we give a sufficient
  condition for the weak convergence of the empirical
  process based on $\{ w(x)(\1_{X_t\leq x} -x): t\in E, x\in
  [0,1]\}$ in $\ell^\infty(E\times [0,1])$. When
  specializing to $w(x)\equiv 1$ and assuming strict monotonicity
  on the marginal distribution functions of the input process,
  we recover a result of \cite{KKZ10}. In the last section,
  we give an example of the main theorem.
\end{abstract}
\maketitle

\section{Introduction}

Given a sequence of independent uniform $(0,1)$ random
variables $X_1, X_2, \cdots$, if let $G_n(x)= n^{-1/2}
\sum_{i=1}^n(\1_{X_i\leq x} -x)$ be the uniform empirical
process, then Donsker's theorem (\cite{Don52}) says $G_n(x)$
converges weakly to the Brownian bridge process, $B(x) $, on
$[0,1]$. Weighted empirical processes consider suitable
weight functions $w(x)$ such that $w(x)G_n(x)$ converges
weakly to the weighted Brownian bridge process $w(x)B(x)$;
in the literature, such a theorem is called
 Chibisov-O'Reilly theorem; see \cite{Cib64}, \cite{Ore74},
\cite{CCHM86a} etc. \cite{KKZ10} considered a time dependent
empirical process
\[
 G_n(t,y):=n^{-1/2}\sum_1^{n}(\1_{Y_i(t)\leq y} - \P(Y_i(t)\leq y)), 
\quad t\in E, \quad y\in \R,
\]
for independent and identically distributed (iid) stochastic
processes \\
$Y_1(t), Y_2(t), \cdots$ for $t\in
E$. Under a condition the authors call the L-condition, this empirical process
converges weakly in $\ell^\infty(E\times \R)$.
In \cite{E-L06}, the authors proved a CLT for weighted tail
empirical processes under a small oscillation condition as the
L-condition guarantees.

We consider a time dependent weighted uniform empirical process.
For a process $X(t)$ for $t\in E$ and a ``weight function''
$w(x)$ on $(0,1)$, we are interested in conditions on the
process and the weight function so that the empirical
process
 \[
 \nu_n(t,y):=n^{-1/2}\sum_1^{n}w(y)(\1_{X_i(t)\leq y} - y)), 
\quad t\in E, \quad y\in [0,1],
\]
where $X(t), X_1(t), X_2(t),\cdots$ are iid, converges
weakly in $\ell^\infty(E\times [0,1])$. We give a sufficient
condition in Section~\ref{s:main} for a Central limit theorem (CLT)
for this empirical process.

This paper is organized as following. In
Section~\ref{s:pre}, we give some definitions and results
about weak convergence (CLT) for empirical processes.
Section~\ref{s:main} contains the main result. The proof is
to use Theorem 4.4 in \cite{AGOZ}.  In particular, the
pre-Gaussian condition and the local modulus condition are
to be checked under the assumptions. An example of the main
theorem is given at the last section.

\section{Preliminaries}\label{s:pre}
Given a centered stochastic process $\{X(t): t\in T\}$,
we define the empirical process based on it by
\begin{equation}\label{def:nu_n}
 \nu_n(t):=n^{-1/2}\sum_{j=1}^n X_j(t), \quad t\in T,
\end{equation}
where $\{X_j(t):t\in T\}$ for $j=1,2,\cdots$ are independent
and identically distributed as $\{X(t):t\in T\} $.

On a probability space $(\Omega, \mathcal{A}, P)$, recall
the outer expectation of an arbitrary function $f:\Omega
\rightarrow \R $
\[
\E^*(f):=\inf \{\, \E g: g\geq f, \, g \text{ is }
(\mathcal{A}, \mathcal{B}(\R)) \text{ measurable}\,\}.
\]
\begin{definition}
  Let $X:=\{X(t): t\in T\}$ be a centered stochastic process
  on a parameter set $T$, and sample paths in $\ell^\infty(T)$. Assume $\E|X(t)|^2<\infty$ for
  $t\in T$. The empirical process based on $X$,
  $\nu_n(t)$ in \eqref{def:nu_n}, satisfies
  the central limit theorem, -- for short $X\in
  \textup{CLT}$ -- if there exists a centered Radon measure
  $\gamma$ on $\ell^\infty(T)$ such that for all
  $H:\ell^{\infty}(T) \rightarrow \R$ bounded and
  continuous, we have
$$
\lim_{n\to \infty} \E^*(H(\nu_n)) =  \int_{\ell^\infty(T)} H\,d\gamma.
$$
\end{definition}

\begin{definition}\label{def:pregaussian}
  A centered stochastic process $\{X_t: t\in T\}$ is
  pregaussian if its covariance coincides with the
  covariance of a centered Gaussian process $G$ on $T$ with
  bounded and uniformly $d_G$-continuous sample paths, where
  $d_G(s,t):=(\E(G(s) - G(t))^2)^{1/2}$.
\end{definition}
\begin{theorem}[cf. \cite{KKZ10}, Proposition 1]\label{thm:comp}
  Let $H_1$ and $H_2$ be zero mean Gaussian processes with $L_2$
  distances $d_{H_1}$, $d_{H_2}$, respectively, on $T$. Furthermore,
  assume $T$ is countable, and $d_{H_1}(s, t) ²\leq d_{H_2}(s, t)$ for
  all $s, t \in T$. Then, $H_2$ sample bounded and uniformly
  continuous on $(T, d_{H_2})$ with probability one, implies $H_1$ is
  sample bounded and uniformly continuous on $(T, d_{H_1})$ with
  probability one.
\end{theorem}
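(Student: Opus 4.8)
The plan is to deduce the conclusion from the two-sided Fernique--Talagrand majorizing measure characterization of sample boundedness and continuity for Gaussian processes, rather than from a direct increment comparison such as Slepian's lemma or the Sudakov--Fernique inequality; the reason for this choice is spelled out in the last paragraph. The single structural consequence of the hypothesis I will exploit is the following nesting of balls: since $d_{H_1}(s,t)\le d_{H_2}(s,t)$, for every $t\in T$ and every $\epsilon>0$ we have
$$
B_{d_{H_2}}(t,\epsilon)=\{s\in T: d_{H_2}(s,t)<\epsilon\}\ \subseteq\ \{s\in T: d_{H_1}(s,t)<\epsilon\}=B_{d_{H_1}}(t,\epsilon).
$$
Two things follow at once: total boundedness of $(T,d_{H_2})$ — which holds because $H_2$ is sample bounded and uniformly continuous — passes to $(T,d_{H_1})$; and every probability measure $m$ on $T$ satisfies $m(B_{d_{H_2}}(t,\epsilon))\le m(B_{d_{H_1}}(t,\epsilon))$.

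Next I would invoke the majorizing measure theorem. By its necessity direction applied to $H_2$, sample boundedness together with uniform $d_{H_2}$-continuity produces a probability measure $m$ on the (countable) set $T$ with
$$
\lim_{\eta\downarrow 0}\ \sup_{t\in T}\ \int_0^{\eta}\Big(\log\frac{1}{m(B_{d_{H_2}}(t,\epsilon))}\Big)^{1/2}\,d\epsilon=0.
$$
Using the ball inclusion (so that $m(B_{d_{H_2}}(t,\epsilon))\le m(B_{d_{H_1}}(t,\epsilon))$) and the fact that $x\mapsto(\log(1/x))^{1/2}$ is nonincreasing, the integrand built from $H_1$ is dominated pointwise by the one built from $H_2$, so the \emph{same} measure $m$ witnesses
$$
\lim_{\eta\downarrow 0}\ \sup_{t\in T}\ \int_0^{\eta}\Big(\log\frac{1}{m(B_{d_{H_1}}(t,\epsilon))}\Big)^{1/2}\,d\epsilon=0.
$$
The sufficiency direction of the theorem, applied to $H_1$ on the totally bounded space $(T,d_{H_1})$, then yields that $H_1$ is sample bounded and uniformly $d_{H_1}$-continuous with probability one. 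The countability of $T$ is what removes all separability and measurability caveats in applying the characterization.

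The main obstacle is conceptual rather than computational: one must resist the two natural but failing shortcuts. The entropy route fails because uniform continuity of $H_2$ does not force convergence of Dudley's entropy integral — that condition is sufficient but not necessary — so I cannot argue ``$H_2$ continuous $\Rightarrow$ entropy integral finite $\Rightarrow$ entropy integral finite for $H_1$ $\Rightarrow$ $H_1$ continuous.'' The Slepian/Sudakov--Fernique route fails because $d_{H_1}\le d_{H_2}$ does \emph{not} imply the four-point increment domination
$$
\E\big(H_1(s)-H_1(t)-H_1(s')+H_1(t')\big)^2\ \le\ \E\big(H_2(s)-H_2(t)-H_2(s')+H_2(t')\big)^2
$$
that would be needed to compare the moduli of continuity via the difference (modulus) processes. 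The majorizing measure formulation sidesteps both difficulties precisely because it is a sharp, two-sided characterization and because it depends on the metric only through the measures of its balls — and that is exactly the quantity the hypothesis controls monotonically.
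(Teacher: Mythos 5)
Your proof is correct, but note that the paper itself contains no proof of this statement: it is quoted as a known result, with the general case attributed to Proposition~1 of \cite{KKZ10} and the case $T=[0,1]$ to Lemma~2.1 of \cite{MS72}. Your majorizing-measure argument is in substance the standard proof behind that cited proposition: the Fernique--Talagrand characterization is monotone in the metric precisely because it sees the metric only through the measures of its balls, and $d_{H_1}\le d_{H_2}$ gives the inclusion $B_{d_{H_2}}(t,\epsilon)\subseteq B_{d_{H_1}}(t,\epsilon)$, so the measure witnessing the condition for $H_2$ also witnesses it for $H_1$, and total boundedness transfers for the same reason. Your diagnosis of why the shortcuts fail is also accurate: Dudley's entropy condition is sufficient but not necessary, and $d_{H_1}\le d_{H_2}$ does not yield the increment domination needed for a Slepian/Sudakov--Fernique comparison of moduli. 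The one step worth making explicit is the last: the sufficiency half of the majorizing measure theorem produces a \emph{version} of $H_1$ with bounded, uniformly $d_{H_1}$-continuous paths; since $T$ is countable, sample boundedness and uniform continuity are events in the product $\sigma$-algebra determined by the finite-dimensional distributions, hence they hold for $H_1$ itself with probability one --- this is exactly where the countability hypothesis enters, as you indicate but do not spell out.
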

When $T=[0,1]$, this is Lemma~2.1 in \cite{MS72}.

The assumption that $T$ is countable can be removed if 
$T$ is given a totally bounded metric.
\begin{lemma}\label{lem:extension}
  Let $\{G(t):t\in T\}$ be a zero mean Gaussian
  process. Further assume $\sup_{t\in T} \E G(t)^2 <
  \infty$.  Let $d_G(s,t):=(\E (G(s)-G(t))^2)^{1/2}$.  Then,
  if $T_0$ is a dense set in $(T, d_G)$ and the restricted
  process $\{G(t):t\in T_0\}$ is sample bounded and
  uniformly $d_G$-continuous, then $\{G(t):t\in T\}$ has a
  version with bounded and uniformly $d_G$-continuous sample
  paths.
\end{lemma}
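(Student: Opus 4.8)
The plan is to carry out the classical extension of a uniformly continuous function from a dense subset, but done pathwise. Let $\Omega_0\subseteq\Omega$ be the probability-one event on which the sample path $t\mapsto G(t)(\omega)$ is bounded and uniformly $d_G$-continuous on $(T_0,d_G)$; such an event exists by hypothesis. The assumption $\sup_{t\in T}\E G(t)^2<\infty$ guarantees that $d_G$ is a finite pseudometric on $T$ and that every $G(t)$ lies in $L_2$; in particular $G(t_n)\to G(t)$ in $L_2$ whenever $d_G(t_n,t)\to 0$. I will define the extension $\tilde G$ on $\Omega_0$ by continuity and set $\tilde G(\cdot)(\omega)\equiv 0$ off $\Omega_0$, then verify that $\tilde G$ is a version of $G$ and that its sample paths are bounded and uniformly $d_G$-continuous.

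Concretely, I fix $t\in T$ and, using density of $T_0$ in $(T,d_G)$, choose $t_n\in T_0$ with $d_G(t_n,t)\to 0$. For $\omega\in\Omega_0$ the sequence $(t_n)$ is $d_G$-Cauchy, so uniform continuity of the sample path forces $(G(t_n)(\omega))_n$ to be Cauchy in $\R$ and hence convergent; I set $\tilde G(t)(\omega):=\lim_n G(t_n)(\omega)$. The standard uniform-continuity extension lemma then shows that this limit does not depend on the chosen sequence, that the resulting map $t\mapsto\tilde G(t)(\omega)$ is the unique uniformly $d_G$-continuous extension of $G(\cdot)(\omega)|_{T_0}$ to $T$, and that it has the same finite supremum and is therefore bounded. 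Here one either passes to the quotient metric space of $(T,d_G)$ or simply notes that the Cauchy-sequence argument is insensitive to pairs with $d_G=0$. Since $\tilde G(t)$ is, on $\Omega_0$, the pointwise limit of the measurable random variables $G(t_n)$ (and $0$ off $\Omega_0$), it is measurable, so $\tilde G$ is a genuine stochastic process with bounded, uniformly $d_G$-continuous sample paths.

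It remains to check that $\tilde G$ is a version of $G$, that is, $\tilde G(t)=G(t)$ almost surely for each fixed $t\in T$; reconciling the pathwise limit with the $L_2$-limit is the one genuine point to get right. For $t\in T_0$ it is immediate, since $\tilde G(t)=G(t)$ on $\Omega_0$. For general $t$, the relation $d_G(t_n,t)\to 0$ means precisely that $G(t_n)\to G(t)$ in $L_2$, hence in probability, while by construction $G(t_n)\to\tilde G(t)$ almost surely on $\Omega_0$ and so also in probability. Because limits in probability are almost surely unique, $\tilde G(t)=G(t)$ almost surely, as required. This completes the construction, the only real care being the pseudometric nature of $d_G$ and the matching of the two modes of convergence, both of which are routine.
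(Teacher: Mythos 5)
Your proof is correct, and the core construction coincides with the paper's: both extend each sample path from $T_0$ to $T$ along $d_G$-convergent sequences, using uniform continuity to get existence and sequence-independence of the pathwise limit, and both obtain boundedness and uniform $d_G$-continuity of the extension by the classical argument. Where you genuinely diverge is in certifying that the extension $\tilde G$ is a version of $G$. The paper argues distributionally: $\tilde G(t)$ is an almost sure limit of the Gaussian variables $G(t_m)$, hence Gaussian, and the covariance of $\tilde G$ agrees with that of $G$ on $T_0\times T_0$ and therefore, by $d_G$-approximation, on all of $T\times T$; equality of covariances for centered Gaussian processes then gives equality of finite-dimensional laws. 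You instead prove the stronger statement that $\tilde G$ is a modification of $G$: for fixed $t$, $G(t_n)\to G(t)$ in $L_2$ (which is exactly what $d_G(t_n,t)\to 0$ says), while $G(t_n)\to\tilde G(t)$ almost surely on $\Omega_0$, so uniqueness of limits in probability forces $\tilde G(t)=G(t)$ a.s. This buys you two things: a modification is in particular a version, and your argument nowhere uses Gaussianity, so it proves the lemma for an arbitrary centered process with $\sup_{t\in T}\E G(t)^2<\infty$; the paper's covariance-matching step, by contrast, genuinely needs the Gaussian hypothesis to pass from second moments to laws, and it also has to justify convergence of the covariances $\E[\tilde G(s_m)\tilde G(t_m)]\to\E[\tilde G(s)\tilde G(t)]$, a point your route bypasses entirely.
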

The proof of this lemma is given in the appendix.

We will use the following theorem to prove our main result.
\begin{theorem}[\cite{AGOZ}, Theorem 4.4]\label{thm:agoz}
  Let $\{X(t): t\in T\}$ be a sample bounded process on
  a set $T$ such that $E X(t)=0$ and $ E X(t)^2< \infty$ for
  all $t\in T$. Assume:
  \begin{enumerate}[\textup{(}$i$\textup{)}]
\item $u^2P^*\{ \|X\|_\infty >u\} \rightarrow 0$ as $u\rightarrow \infty$,
 \item $X$ is pregaussian, and
 \item there is pseudometric $\rho$ on $T$ dominated by the
   pseudometric $d_G$ corresponding to a centered Gaussian
   process $G$ on $T$ with bounded and uniformly
   $d_G$-continuous paths such that for some $K$ and for all
   $t\in T$ and $\varepsilon>0$,
 $$ 
 \sup_{u>0}u^2P^*(\sup_{s\in B_{\rho}(t,\varepsilon)} |X(t)
 - X(s)|>u)\leq K\varepsilon^2.$$
  \end{enumerate}
Then $X\in \textup{CLT} $ as a $\ell^\infty(T)$-valued random
element.
\end{theorem}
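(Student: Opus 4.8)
The plan is to establish weak convergence of $\nu_n$ in $\ell^\infty(T)$ through the standard two-part criterion: $\nu_n$ converges weakly to a tight (Gaussian) limit if and only if (a) its finite-dimensional distributions converge, and (b) $\{\nu_n\}$ is asymptotically uniformly $\rho$-equicontinuous in probability for a pseudometric $\rho$ making $T$ totally bounded. First I would record that $(T,\rho)$ is indeed totally bounded: by pregaussianity (ii) the limiting Gaussian process $G$ has bounded, uniformly $d_G$-continuous sample paths, and a centered Gaussian process with such paths forces $(T,d_G)$ to be totally bounded; since $\rho$ is dominated by $d_G$ by (iii), $(T,\rho)$ is totally bounded as well. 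This also identifies the candidate limit.

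For the finite-dimensional convergence, fix $t_1,\dots,t_k\in T$; the vectors $(X_j(t_1),\dots,X_j(t_k))$ for $j\ge 1$ are iid, centered, and square-integrable, so the classical multivariate CLT gives $(\nu_n(t_1),\dots,\nu_n(t_k))\Rightarrow N(0,\Sigma)$ with $\Sigma_{ab}=\E X(t_a)X(t_b)$. By pregaussianity (ii) this $\Sigma$ is exactly the covariance of $G$, so the prospective limit is the Gaussian process whose existence and path-regularity are guaranteed by (ii). Finite second moments alone suffice here; the stronger, uniform-in-$t$ tail condition (i) is reserved for the equicontinuity step.

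The heart of the proof is the asymptotic equicontinuity, namely that for every $\eta>0$, $\lim_{\delta\downarrow 0}\limsup_n \Pr(\sup_{\rho(s,t)\le\delta}|\nu_n(s)-\nu_n(t)|>\eta)=0$. I would first symmetrize, replacing $\nu_n$ by $n^{-1/2}\sum_j \varepsilon_j X_j$ with Rademacher signs, and then truncate each $X_j$ at a level of order $\sqrt n$; the tail condition (i), $u^2\Pr\{\|X\|_\infty>u\}\to 0$, guarantees that the discarded large-value part is asymptotically negligible in sup-norm. On the truncated symmetrized process I would randomize by Gaussian multipliers and run a chaining argument along a $d_G$-net, using the local oscillation bound (iii) to control, at each scale $\varepsilon$, the weak-$L_2$ size of the increments over $\rho$-balls of radius $\varepsilon$, and using the majorizing-measure (Dudley) entropy furnished by pregaussianity of $G$ to sum the chain. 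Because $\rho\le d_G$, the covering numbers and the modulus of continuity of $G$ dominate those of the increments of $\nu_n$, so the chaining sum is controlled uniformly in $n$ and tends to $0$ with $\delta$; combining (a) and (b) then yields $X\in\CLT$.

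I expect the main obstacle to be precisely this chaining step. Condition (iii) supplies only weak-$L_2$ (Lorentz $L_{2,\infty}$) control of the ball oscillations rather than the sub-Gaussian or genuine $L_2$ increments under which empirical-process chaining is usually carried out, so a naive single-scale union bound over an $\varepsilon$-net fails: Sudakov's inequality gives only $\varepsilon^2\log N(\varepsilon,T,d_G)\to 0$, far weaker than what a union bound of weak-$L_2$ tails would require. The delicate point is therefore to interleave the truncation coming from (i) with the multiscale oscillation bounds from (iii) and the Gaussian entropy integral, so that the truncated increments behave, after summation, like $L_2$ (indeed sub-Gaussian) increments compatible with $d_G$. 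Managing this balance among the truncation level, the oscillation scale, and the entropy is where the real work lies.
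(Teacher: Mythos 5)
This statement is nowhere proved in the paper you were asked to match: it is imported verbatim as Theorem~4.4 of \cite{AGOZ} and used as a black box, so the only question is whether your argument would itself establish it. As it stands, it would not. Everything you actually verify is the routine part: total boundedness of $(T,\rho)$ from pregaussianity via Sudakov plus $\rho\le d_G$; convergence of finite-dimensional distributions by the classical multivariate CLT, with limiting covariance that of $G$; and negligibility of summands exceeding level $\lambda\sqrt n$, which is indeed what condition ($i$) encodes. The decisive step --- deducing asymptotic $\rho$-equicontinuity of $\nu_n$ from the weak-$L_2$ ball-oscillation bound ($iii$) --- is precisely the step you defer, and your closing admission that ``managing this balance \ldots is where the real work lies'' concedes that the content of the theorem is missing. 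An outline whose ``main obstacle'' paragraph coincides with the theorem's substance is not a proof.

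Moreover, the route you sketch for that step would fail as described. First, pregaussianity does not furnish a Dudley entropy integral: bounded, uniformly $d_G$-continuous Gaussian paths are characterized by majorizing measures (generic chaining), and Dudley's integral can diverge for such $G$; so there is no ``entropy furnished by pregaussianity'' to sum a chain against, only the Sudakov bound $\varepsilon^2\log N(\varepsilon,T,d_G)\to 0$, which you yourself note is too weak. Second, ($iii$) is a single-scale tail bound on the ball envelope $Y_t^\varepsilon:=\sup_{s\in B_\rho(t,\varepsilon)}|X(t)-X(s)|$; a bound $u^2P^*(Y>u)\le K\varepsilon^2$ only gives truncated second moments of order $K\varepsilon^2\bigl(1+\log(M/\varepsilon)\bigr)$ at truncation level $M$, a logarithmic loss that ruins any multiscale summation, and ($iii$) provides no increment control linking different scales for a fixed pair $(s,t)$. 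This is exactly why the argument in \cite{AGOZ} is not a chaining argument on $\nu_n$ at all: it works at a single scale, covering $T$ by finitely many $\rho$-balls, bounds the within-ball oscillation of the symmetrized and truncated empirical process by that paper's maximal inequalities for sums of iid processes under local Lorentz-norm ($\Lambda_{2,\infty}$) hypotheses (Hoffmann--J{\o}rgensen-type inequalities combined with truncation at levels tied to $\sqrt n$, where ($i$) enters again), and treats the finitely many net points via the finite-dimensional CLT together with the uniform continuity of the limiting Gaussian process. That machinery, not a trade-off between truncation and entropy chaining, is what the theorem rests on; without it, your step (b) remains unproven.
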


\begin{definition} \label{def:dist-trans} Let $F(x)$ be a
  distribution function (df) on $\R$. The (randomized)
  distributional transform of $F(x)$ as defined in \cite{Rus09} is
  \[ \tilde F(x) := \tilde F(x, V):= F(x-) + (F(x)-F(x-))V,
  \] where $V$ is a uniform random variable on $[0,1]$.
\end{definition}

Next we give some simple properties of the distributional
transform.
\begin{lemma}\label{lem:properites-tilde-F}
\begin{enumerate}[\textup{(}$i$\textup{)}]
\item $\tilde F(x) \leq F(x)$ for all $x\in \R$.
\item If $x< y$, then $F(x) \leq \tilde F(y) $.
\item If $x\leq y$, then $\tilde F(x) \leq \tilde F(y) $.
\item If $x< y$ and $F(\cdot)$ is strictly increasing, 
then $F(x) < \tilde F(y) $.
\end{enumerate}
\end{lemma}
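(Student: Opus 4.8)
The plan is to reduce the entire lemma to a single structural observation: since $V$ takes values in $[0,1]$, the distributional transform
\[
\tilde F(x) = F(x-) + (F(x)-F(x-))V = (1-V)F(x-) + V F(x)
\]
is a convex combination of $F(x-)$ and $F(x)$. As $F$ is nondecreasing we have $F(x-) \leq F(x)$, so the convex combination is trapped between its two endpoints,
\[
F(x-) \leq \tilde F(x) \leq F(x) \qquad \text{for every } x \in \R .
\]
Establishing this two-sided bound is the one substantive step; all four claims then follow by elementary monotonicity arguments. I therefore do not anticipate any serious obstacle, only the need to keep careful track of the left limit $F(x-)$ versus the value $F(x)$.

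Part ($i$) is precisely the upper bound above. For part ($ii$), I would use the lower bound applied at $y$, namely $\tilde F(y) \geq F(y-)$, combined with the fact that $F(y-) = \lim_{z \uparrow y} F(z) \geq F(x)$ whenever $x < y$ (again by monotonicity of $F$); this gives $F(x) \leq F(y-) \leq \tilde F(y)$. Part ($iii$) in the case $x < y$ is then immediate by chaining ($i$) and ($ii$): $\tilde F(x) \leq F(x) \leq \tilde F(y)$. The remaining case $x = y$ is a tautology, since the same realization of $V$ is used on both sides, so $\tilde F(x)$ and $\tilde F(y)$ are literally the same quantity.

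The only part demanding genuine care is ($iv$), where the inequality must be made strict. Here I would deliberately avoid invoking continuity of $F$, since strict monotonicity of a distribution function need not force continuity; instead I would interpose an intermediate point. Given $x < y$, choose any $z$ with $x < z < y$, which exists by the density of $\R$. Strict monotonicity yields $F(x) < F(z)$, while $z < y$ gives $F(z) \leq F(y-)$. Combining these with the lower bound $\tilde F(y) \geq F(y-)$ from the convex-combination estimate produces
\[
F(x) < F(z) \leq F(y-) \leq \tilde F(y),
\]
which is exactly the desired strict inequality. Thus the whole lemma rests on the convex-combination bound, with strictness in ($iv$) supplied by the intermediate point $z$.
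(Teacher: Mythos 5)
Your proof is correct and follows essentially the same route as the paper: both arguments rest on the squeeze $F(x-) \leq \tilde F(x) \leq F(x)$ coming from $V \in [0,1]$, derive ($ii$) from $F(x) \leq F(y-) \leq \tilde F(y)$, obtain ($iii$) by chaining ($i$) and ($ii$), and get the strict inequality in ($iv$) by interposing a point $z$ with $x < z < y$ and using strict monotonicity. The only cosmetic difference is that you state the two-sided convex-combination bound explicitly up front, where the paper invokes it implicitly.
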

\begin{proof}
  By definition, $(i)$ is obvious.  For $(ii)$, take
  $x<z<y$, hence $F(x)\leq F(z)$. Since $F(z)\leq F(y-)$ and
  $F(y-)\leq \tilde F(y)$, hence $F(x) \leq \tilde F(y)$.
  For $(iii)$, if $x=y$, there is nothing to prove; assume
  $x<y$. By $(i)$ and $(ii)$, we get $(iii)$.  For $(iv)$,
  take $x<z<y$. Since $F(\cdot)$ is strictly increasing,
  $F(x)<F(z)$. But by $(ii)$, $F(z)\leq \tilde F(y)$. Hence
  $F(x)<\tilde F(y)$.
\end{proof}

For a continuous df $F$ of a random variable $X$, the random
variable $F(X)$ is uniform on $[0,1]$; but for a general df
$F$, this might not be the case. However using the
(randomized) distributional transform overcomes this.
\begin{lemma} \label{lem:dist-trans} If $F(x)$ is the
  distribution function of a random variable $X$, then
  $\tilde F(X):=\tilde F(X, V)$ is uniform on $[0,1]$.  Here $V$ is a
  uniform random variable on $[0,1] $ independent of $X$.
\end{lemma}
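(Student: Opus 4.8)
The plan is to fix $u\in(0,1)$ and show $\P(\tilde F(X,V)\le u)=u$; since $\tilde F(X,V)$ always takes values in $[0,1]$, this identifies its law as the uniform distribution. The starting point is to condition on $X$ and read off the conditional law of $\tilde F(x,V)=F(x-)+(F(x)-F(x-))V$. When $x$ is not an atom of $F$ (so $F(x-)=F(x)$) the transform is the constant $F(x)$, while at an atom it is uniformly distributed on $[F(x-),F(x)]$; the randomizing variable $V$ exactly ``fills in'' each jump. Consequently, writing $g(x):=\P(\tilde F(x,V)\le u)$, one finds $g(x)=1$ if $F(x)\le u$, $g(x)=0$ if $F(x-)>u$, and $g(x)=(u-F(x-))/(F(x)-F(x-))$ in the remaining case $F(x-)\le u<F(x)$, so that $\P(\tilde F(X,V)\le u)=\E\,g(X)$.

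The second step is to evaluate this expectation. First I would record the elementary but crucial observation that, because $F$ is nondecreasing, at most one point can satisfy $F(\cdot-)\le u<F(\cdot)$: if two such points $x_1<x_2$ existed, then $u<F(x_1)\le F(x_2-)\le u$, a contradiction. Hence the ``fractional'' middle case contributes either nothing or a single atom. Introducing the generalized inverse $q:=F^{-1}(u)=\inf\{x:F(x)\ge u\}$, I split into two cases. If a straddling atom $x_0$ exists, then necessarily $x_0=q$; the event $\{F(X)\le u\}$ reduces to $\{X<x_0\}$ (points below have $F\le F(x_0-)\le u$, points above have $F\ge F(x_0)>u$), and the middle term equals $\big((u-F(x_0-))/(F(x_0)-F(x_0-))\big)\,\P(X=x_0)=u-F(x_0-)$; adding $\P(X<x_0)=F(x_0-)$ gives exactly $u$. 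If no straddling atom exists, the middle term vanishes and $F(q)=u$, whence $\{F(X)\le u\}$ equals $\{X\le q\}$ up to a $\P_X$-null flat stretch of $F$ at level $u$, and $\E\,g(X)=\P(X\le q)=F(q)=u$.

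I expect the main obstacle to be the careful bookkeeping in this second step, in particular handling flat pieces of $F$ at the level $u$ (which carry no mass for $X$ and so may be discarded) and the boundary subtleties distinguishing $F(q-)$, $u$, and $F(q)$. The probabilistic content is entirely contained in the two clean facts---conditionally on an atom the transform is uniform on the jump, and at most one jump can straddle a fixed level---so once these are isolated the remainder is a short case analysis. The lemma then follows, and it is precisely the device that will let us replace a general input process by a genuinely uniform one in the sequel.
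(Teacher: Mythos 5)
Your proof is correct, and it is genuinely different from what the paper does: the paper offers no argument at all for this lemma, simply citing R\"uschendorf's paper on the distributional transform for the proof. Your argument is a complete, self-contained replacement. The structure is sound: conditioning on $X=x$ and computing $g(x)=\Pr(\tilde F(x,V)\le u)$ in the three regimes $F(x)\le u$, $F(x-)>u$, and $F(x-)\le u<F(x)$ is exactly right (the jump is ``filled in'' uniformly by $V$); the observation that monotonicity forces at most one straddling point $x_0$ is the key combinatorial fact; and the two-case bookkeeping closes correctly, since in the atom case $\P(X<x_0)+\frac{u-F(x_0-)}{F(x_0)-F(x_0-)}\,\P(X=x_0)=F(x_0-)+(u-F(x_0-))=u$, while in the no-atom case the flat stretch $\{x>q:F(x)=u\}$ carries $\P_X$-measure $F(x^*)-F(q)=u-u=0$, so $\P(F(X)\le u)=F(q)=u$. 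What your route buys is that the paper becomes self-contained on this point, and the mechanism of the randomized transform (each jump of $F$ is spread uniformly over the gap $[F(x-),F(x)]$) is made explicit, which is also the intuition used implicitly elsewhere in the paper (e.g.\ in the proof of Corollary~\ref{cor:kkz}, where $\tilde F_t(Y_t)\stackrel{d}{=}U(0,1)$ is invoked). The only stylistic remark: in the straddling case you do not actually need the identification $x_0=q$; the computation goes through with $x_0$ alone, so that sentence can be dropped.
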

\begin{proof}
For a proof, see \cite{Rus09}.
\end{proof}
\begin{definition}
  We say a (pseudo) distance $\rho$ on a set $T$ is a
  continuous Gaussian distance if there is a zero mean
  Gaussian process $\{G(t): t\in T\}$ with bounded and
  uniformly $d_G$-continuous sample paths where
  $d_G(s,t):=(\E(G(s) - G(t))^2)^{1/2}$ and $\rho(s,t)=d_G(s,t)$ for all 
$s,t \in T$.
\end{definition}

For notation, we write $X$ for a process $\{X(t): t\in E\}$
and $X_t$ for $X(t)$. We recall from \cite{KKZ10}
\begin{definition}[L-condition for a stochastic process]
  \label{L-cond}
  Let $X:=\{X_t: t\in E\}$ be a stochastic process. The
  process $X$ satisfies the L-condition if there exists a
  continuous Gaussian distance $\rho$ on $E$ such that for
  every $\varepsilon>0$
\begin{equation}\label{eq:L} \sup\limits_{t\in E}
{\P}^*(\supl_{s:\rho(s,t)\leq \varepsilon} |\tilde F_t(X_t) -
\tilde F_t(X_s)| >\varepsilon^2)\leq L \varepsilon^2,
\end{equation} where $\tilde F_t(\cdot)$ is the
distributional transform of the distribution function 
$F_t(\cdot)$ of $X_t$.
\end{definition}

\begin{theorem}[\cite{KKZ10}, Theorem 3]\label{thm:kkz}
  Let $X(t)$ be a process on $E$. Let $\rho$ be given by
  $\rho(s,t)^2=\E (H(s) - H(t))^2$, for some centered
  Gaussian process $H$ that is sample bounded and uniformly
  continuous on $(E,\rho)$ with probability one. Further,
  assume that for some $L<\infty$, and all $\varepsilon>0$,
  the L-condition holds for $X$, and $D(E)$ is a collection
  of real valued functions on $E$ such that
  $\textup{P}(X(\cdot)\in D(E))=1$. If
\[
 \mathcal{C}=\{C_{s,x}:s\in E, x\in \R\},
\]
where
\[
 C_{s,x}=\{z\in D(E):z(s)\leq x\}
\]
for $s\in E$, $x\in \R$, then $\mathcal{C}\in \textup{CLT}(X)$.
\end{theorem}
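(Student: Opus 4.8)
The plan is to apply Theorem~\ref{thm:agoz} to the centered, bounded process $Z(s,x):=\1_{X(s)\le x}-F_s(x)$ indexed by $T:=E\times\R$, where $F_s$ is the distribution function of $X(s)$; the assertion $\mathcal C\in\CLT(X)$ is precisely $Z\in\CLT$. Condition $(i)$ is immediate, since $|Z(s,x)|\le 1$ forces $\|Z\|_\infty\le 1$ and hence $u^2\P^*\{\|Z\|_\infty>u\}=0$ for $u\ge 1$. All the work lies in producing a continuous Gaussian distance that dominates a pseudometric $\rho$ on $T$ so that the pregaussian condition $(ii)$ and the local oscillation condition $(iii)$ hold; both are driven by the distributional transform and the L-condition.

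First I would pass from the indicators of $X$ to the uniformized variables $U_s:=\tilde F_s(X_s,V)$, where $V$ is an auxiliary independent uniform; by Lemma~\ref{lem:dist-trans} each $U_s$ is uniform on $[0,1]$, and the quantile identity $X_s=F_s^{-1}(U_s)$ together with $F_s^{-1}(u)\le x\iff u\le F_s(x)$ gives $\1_{X_s\le x}=\1_{U_s\le F_s(x)}$ a.s. Applying the single transform $\tilde F_s$ to the value at another time and using Lemma~\ref{lem:properites-tilde-F}$(i)$--$(ii)$ identifies $\1_{X_{s'}\le x'}$ with $\1_{\tilde F_s(X_{s'})\le F_s(x')}$ off a boundary event. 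Writing $a=U_s$, $b=\tilde F_s(X_{s'})$, $p=F_s(x)$, $q=F_s(x')$, a short case analysis of a flip $\1_{a\le p}\ne\1_{b\le q}$ yields
\[
|U_s-F_s(x)|\ \le\ |\tilde F_s(X_s)-\tilde F_s(X_{s'})|+|F_s(x)-F_s(x')|,
\]
the inequality that couples the time increment (first term, governed by the L-condition) to the threshold increment (second term, deterministic).

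Next I would build the dominating Gaussian process. Reparametrizing the class through the transform, the natural index is $E\times[0,1]$ with second coordinate the uniform level $u$, on which a standard Brownian motion $W$ supplies the exact distance $\sqrt{|u-u'|}$ in the threshold direction; combined with the given process $H$ one sets, up to absolute scaling constants, $G:=c_1H+c_2W$ so that $d_G^2=c_1^2\rho_H^2+c_2^2|u-u'|$ with $\rho_H^2(s,s')=\E(H(s)-H(s'))^2$. Since $H$ and $W$ both have bounded, uniformly continuous paths, $G$ restricted to a countable $d_G$-dense subset is sample bounded and uniformly continuous, and Lemma~\ref{lem:extension} upgrades this to a bounded uniformly $d_G$-continuous version on all of $E\times[0,1]$, so $\rho:=d_G$ is a continuous Gaussian distance. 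For $(ii)$ I would first note that the L-condition yields the pointwise time bound $\P(\1_{X_s\le x}\ne\1_{X_{s'}\le x})\le (2+L)\rho_H(s,s')^2$ --- obtained by applying the displayed inequality with $p=q$, splitting on whether $|\tilde F_s(X_s)-\tilde F_s(X_{s'})|\le\rho_H(s,s')^2$, and using that $U_s$ is uniform --- and then combine it with the exact threshold bound through the triangle inequality to get $d_Z\le d_G$ pointwise. Theorem~\ref{thm:comp} then transfers bounded uniform continuity from $G$ to the Gaussian process carrying $Z$'s covariance, which is exactly pregaussianity.

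Finally, for $(iii)$, the $\rho$-ball $B_\rho((s,x),\varepsilon)$ forces $\rho_H(s,s')\lesssim\varepsilon$ and $|F_s(x)-F_s(x')|\lesssim\varepsilon^2$. On the good event of the L-condition, of probability at least $1-L\varepsilon^2$, one has $\sup_{s':\rho_H(s,s')\le\varepsilon}|\tilde F_s(X_s)-\tilde F_s(X_{s'})|\le\varepsilon^2$, so the displayed inequality makes every flip over the ball force $|U_s-F_s(x)|\lesssim\varepsilon^2$, an event of probability $O(\varepsilon^2)$ because $U_s$ is uniform. Since the random part of $Z(s,x)-Z(s',x')$ is $\{-1,0,1\}$-valued and its centering varies by $O(\varepsilon^2)$ over the ball, this gives $\sup_{u>0}u^2\P^*(\sup_{B_\rho}|Z(s,x)-Z(s',x')|>u)\le K\varepsilon^2$, which is $(iii)$, and Theorem~\ref{thm:agoz} then delivers $Z\in\CLT$. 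I expect the main obstacle to be the bookkeeping at this last step: reducing the uncountable supremum to a countable dense set to legitimize the outer-probability estimates (as required by Theorem~\ref{thm:comp} and repaired by Lemma~\ref{lem:extension}), disposing of the distributional-transform boundary event $\{\tilde F_s(X_{s'})=F_s(x')\}$, and checking that the $\rho$-radius $\varepsilon$ really converts into a threshold window of order $\varepsilon^2$ and a time window of $\rho_H$-radius $\varepsilon$, so that the two $\varepsilon^2$ contributions align with the L-condition's budget.
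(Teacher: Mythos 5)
Your proposal is correct, but it takes a genuinely different route from the paper's. The paper does not prove Theorem~\ref{thm:kkz} directly: it recovers it as Corollary~\ref{cor:kkz}, by first passing to the copula process $X_t=\tilde F_t(Y_t)$ via Proposition~\ref{prop:X2Y}$(i)$, then verifying the WL-condition from the L-condition and invoking the weighted main theorem (Theorem~\ref{thm:main1} with $w\equiv 1$). That reduction forces the additional hypothesis that each $F_t$ be strictly increasing (an assumption not present in the statement), and without it the paper only obtains the CLT on countable subsets of $E\times\R$ (Proposition~\ref{prop:X2Y}$(ii)$ and the remark after Corollary~\ref{cor:kkz}). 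You instead apply Theorem~\ref{thm:agoz} directly to $Z(s,x)=\1_{X_s\le x}-F_s(x)$ indexed by $E\times\R$, using the distributional transform only as an estimation device inside the probability bounds (the flip inequality plus uniformity of $U_s$), not as a reparametrization of the process; this is essentially the original argument of \cite{KKZ10}, and it proves the statement in full generality, with no monotonicity assumption. What each approach buys: the paper's route exhibits the result as a special case of its weighted theorem, at the price of the extra hypothesis; your direct route is self-contained and covers the general case, but reuses none of the weighted machinery. Notably, your pregaussian step --- comparing $d_Z$ with the $L_2$ distance of $c_1H(s)+c_2W(F_s(x))$ and then invoking Theorem~\ref{thm:comp} and Lemma~\ref{lem:extension} --- is exactly the paper's Theorem~\ref{thm:iid-pregaussian}, which the paper proves as an alternative to the chaining argument of \cite{KKZ10} but does not actually use on its way to Corollary~\ref{cor:kkz} (there it uses the uniform-process comparison of Corollary~\ref{cor:preg} instead).

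Two small repairs to your bookkeeping, both harmless. First, the ``boundary event'' $\{\tilde F_s(X_{s'})=F_s(x')\}$ that you list as an obstacle is a non-issue: it can indeed have positive probability when $F_s$ has flat parts (so it cannot be ``disposed of'' as a null set), but your displayed inequality $|U_s-F_s(x)|\le|\tilde F_s(X_s)-\tilde F_s(X_{s'})|+|F_s(x)-F_s(x')|$ holds on the \emph{actual} flip event $\{\1_{X_s\le x}\ne\1_{X_{s'}\le x'}\}$ using only the one-sided implications of Lemma~\ref{lem:properites-tilde-F} (if $X_{s'}>x'$ then $F_s(x')\le\tilde F_s(X_{s'})$; if $X_{s'}\le x'$ then $\tilde F_s(X_{s'})\le F_s(x')$), together with the a.s.\ quantile identity $\1_{X_s\le x}=\1_{U_s\le F_s(x)}$; no two-sided identification is needed. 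Second, the claim that the $\rho$-ball ``forces $|F_s(x)-F_s(x')|\lesssim\varepsilon^2$'' needs one extra line: the ball only controls $|F_s(x)-F_{s'}(x')|$, and you must add $\sup_z|F_s(z)-F_{s'}(z)|\le(2+L)\rho_H(s,s')^2$ --- which is precisely the pointwise time bound you already derive for the pregaussian step (it is Lemma~1 of \cite{KKZ10}, quoted by the paper in the proof of Corollary~\ref{cor:kkz}).
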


In this case,
we say the empirical process based on
$\{\1_{Y(t)\leq y} - \P(Y(t)\leq y):\, t\in E,\, y\in \R\} $
satisfies the CLT or write
$\mathcal{C}\in \textup{CLT}(X)$ in $\ell^\infty(E\times \R)$.

\section{Weak convergence of the time dependent weighted
  empirical process}
\label{s:main}
In view of Theorem~\ref{thm:kkz}
and the classical weighted empirical process,
a natural question is to consider
the time dependent weighted (uniform) empirical process,
\[
\alpha_n(t,y):=n^{-1/2}\sum_{i\leq n} w(y)(\1_{X_i(t)\leq y}
-y), t\in E, \, y\in [0,1]
\]
where $\{X(t), X_1(t), X_2(t),\cdots\}$ are iid uniform
processes (see the definition below). 
Under the WL-condition (below) and some regularity conditions
on the weight function $w(\cdot)$, we proves
a CLT 
for the empirical process $\alpha_n$. 

\begin{definition}\label{def:uniform-process}
  We call a process $X=\{X(t):t\in E\}$ a uniform
  process if for each $t\in E$, $X(t)$ is uniformly
  distributed on $(0,1)$.
\end{definition}
We call the main condition in our theorem the WL-condition.
\begin{definition}\label{def:WL-condition}[WL-condition for $(X;w)$]
  Given a uniform process $X:=\{X_t: t\in E\}$ and a
  function $w:=w(x)>0$ on $(0,1)$, we say $(X;w)$ satisfies
  the WL-condition if for some constant $L$ (depending on
  $w$, but not on $x$), some continuous Gaussian distance
  $\rho$ on $E$ and all $\varepsilon>0$, $0<x<1$, we have
  \begin{align}
        \sup_t\Pr(\sup_{s:\rho(s,t)\leq \varepsilon} \1_{X_t\leq x <
      X_s}>0) &\leq \tfrac{L\varepsilon^2}{w(x)^2} \label{WLts}\\
    \sup_t\Pr(\sup_{s:\rho(s,t)\leq \varepsilon}
    \1_{X_s\leq x < X_t}>0) &\leq {L\varepsilon^2\over w(x)^2} \label{WLst}
  \end{align}
\end{definition}
The following is the main result of this paper. 
\begin{theorem}\label{thm:main1} Let $X:=\{X_t:t\in E\}$ be
  a uniform process on a parameter set $E$. Let $w:=w(x)>0, 0<x<1$ be
  continuous and symmetric about $x=1/2$ for which
  there exists $\gamma\in (0,1/2]$ such that $w$ is
  non-increasing and $xw(x)^2$ is non-decreasing on
  $(0,\gamma)$ and such that $w$ is uniformly bounded on
  $[\gamma,1/2]$. Further, assume that $w(x)$ is regularly
  varying in a neighborhood of zero and satisfies the
  integral condition
\begin{equation}\label{eq:1d-weight}
  \int_0^{\gamma} s^{-1} \exp[-c/(sw(s)^2)]\, ds <\infty \text{ for all } c>0.
\end{equation}
If
\[\lim\limits_{\alpha\to \infty} \alpha^2\Pr(\sup_{
  t\in E} w(X_t)>\alpha) = 0\] and the \textup{WL-condition}
 for $(X;w)$ is satisfied, then the empirical
  process based on $\{w(x)(\1_{X_t\leq x}-x): t\in E, \,
  x\in [0,1]\}$ converges weakly in $\ell^\infty(E\times
  [0,1])$.
\end{theorem}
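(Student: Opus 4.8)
The plan is to verify the three hypotheses of Theorem~\ref{thm:agoz} for the centered, sample-bounded process $Z(t,y):=w(y)(\1_{X_t\le y}-y)$ indexed by $T:=E\times[0,1]$; since $X_t$ is uniform on $(0,1)$ we have $\E Z(t,y)=w(y)(y-y)=0$ and $\E Z(t,y)^2=w(y)^2y(1-y)<\infty$, so Theorem~\ref{thm:agoz} applies once (i)--(iii) are checked. For the envelope condition (i), I would first note that $|Z(t,y)|=w(y)y$ when $X_t>y$ and $|Z(t,y)|=w(y)(1-y)$ when $X_t\le y$. Using that $w$ is non-increasing with $xw(x)^2$ non-decreasing on $(0,\gamma)$, symmetric about $1/2$, and bounded on $[\gamma,1/2]$, one checks that $\sup_y w(y)y$ and $\sup_{y\ge X_t}w(y)(1-y)$ are each bounded by a constant multiple of $\max(1,w(X_t))$ (the endpoint terms are of the form $w(y)\sqrt{\cdot}\le C$, while the flip near $y=X_t$ contributes $\approx w(X_t)$). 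Hence $\|Z\|_\infty\le C(1+\supl_{t\in E}w(X_t))$, and for large $u$, $u^2\Pr(\|Z\|_\infty>u)\le C^2(u/C)^2\Pr(\supl_t w(X_t)>u/C-1)\to0$ by the tail hypothesis $\lim_\alpha\alpha^2\Pr(\supl_t w(X_t)>\alpha)=0$.

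For (ii) and (iii) I would introduce the product pseudometric
$$\rho_T((s,z),(t,y)):=\rho(s,t)+\sigma(z,y),$$
where $\rho$ is the continuous Gaussian distance furnished by the WL-condition and $\sigma(z,y)^2:=\E\big(w(z)B(z)-w(y)B(y)\big)^2$ is the canonical distance of the weighted Brownian bridge $w(\cdot)B(\cdot)$. The oscillation splits as $|Z(t,y)-Z(s,z)|\le|Z(t,y)-Z(t,z)|+|Z(t,z)-Z(s,z)|$ into a space part (fixed $t$) and a time part (fixed $z$). For the time part, $|Z(t,x)-Z(s,x)|=w(x)|\1_{X_t\le x}-\1_{X_s\le x}|$, which equals $w(x)$ when exactly one of $X_t\le x$, $X_s\le x$ holds and is $0$ otherwise; hence for $u<w(x)$,
\begin{align*}
\Pr\Big(\supl_{s:\rho(s,t)\le\varepsilon}|Z(t,x)-Z(s,x)|>u\Big)
&\le \Pr\big(\supl_{s:\rho(s,t)\le\varepsilon}\1_{X_t\le x<X_s}>0\big)\\
&\quad+\Pr\big(\supl_{s:\rho(s,t)\le\varepsilon}\1_{X_s\le x<X_t}>0\big)\le\frac{2L\varepsilon^2}{w(x)^2}
\end{align*}
by \eqref{WLts}--\eqref{WLst}, while the probability is $0$ for $u\ge w(x)$. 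Multiplying by $u^2\le w(x)^2$ gives the clean bound $2L\varepsilon^2$, so $w(x)^2$ cancels $1/w(x)^2$ exactly---this is the precise purpose of the WL-condition.

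The space direction is the main obstacle. With $U:=X_t$ a single uniform variable, $Z(t,z)-Z(t,x)=[w(z)\1_{U\le z}-w(x)\1_{U\le x}]-[zw(z)-xw(x)]$ decomposes into a deterministic drift and a jump of size $\approx w(U)$ occurring only when $U$ lies between $x$ and $z$, an event of probability $|z-x|$; I would exploit the regular variation of $w$, the monotonicity of $w$ and $xw(x)^2$, and the boundedness on $[\gamma,1/2]$ to show $u^2\Pr(\supl_{z:\sigma(z,x)\le\varepsilon}|Z(t,z)-Z(t,x)|>u)\le K'\varepsilon^2$, in the manner of classical oscillation estimates for weighted uniform empirical processes. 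For pregaussianity (ii), observe that for fixed $t$ the covariance of $Z(t,\cdot)$ equals that of $w(\cdot)B(\cdot)$ since $\E(\1_{U\le x}-x)(\1_{U\le z}-z)=\min(x,z)-xz$, so the space $L_2$-distance of $Z$ is exactly $\sigma$, while $\E(Z(t,x)-Z(s,x))^2\le2L\rho(s,t)^2$ by the WL-condition. Taking independent $G_1$ (canonical distance $\rho$, from the WL Gaussian process) and $G_2=w(\cdot)B(\cdot)$, the process $G:=G_1+G_2$ has $d_G^2=\rho^2+\sigma^2$, whence $\rho_T\le\sqrt2\,d_G$ and $d_Z\le C\,d_G$. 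The integral condition \eqref{eq:1d-weight} (a Chibisov--O'Reilly-type condition) ensures $w(\cdot)B(\cdot)$ has bounded, uniformly continuous paths and $G_1$ does by definition, so $G$ does; by the comparison Theorem~\ref{thm:comp} (applied on a countable dense subset and extended via Lemma~\ref{lem:extension}) the Gaussian process with the covariance of $Z$ inherits bounded uniformly continuous paths, giving (ii) and showing $\rho_T$ is dominated by a continuous Gaussian distance.

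Combining the two directions by the triangle inequality and splitting $u$ (so that $\supl_u u^2\Pr(A+B>u)\le4[\supl_u u^2\Pr(A>\cdot)+\supl_u u^2\Pr(B>\cdot)]$) yields (iii) with $\rho=\rho_T$ dominated by $d_G$ and $K$ a constant multiple of $L$ plus $K'$. Then (i), (ii), (iii) all hold, and Theorem~\ref{thm:agoz} gives $Z\in\textup{CLT}$ as an $\ell^\infty(E\times[0,1])$-valued random element, i.e. $\alpha_n(t,y)$ converges weakly in $\ell^\infty(E\times[0,1])$. I expect the bulk of the technical work to be the endpoint analysis of the space oscillation and the verification that \eqref{eq:1d-weight} delivers continuity of the weighted bridge; the time component and the envelope condition follow comparatively directly from the WL-condition and the tail hypothesis, respectively.
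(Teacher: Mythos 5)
Your overall strategy is the same as the paper's: verify (i)--(iii) of Theorem~\ref{thm:agoz}, split the increment into a fixed-$t$ space part and a mixed part $w(x)(\1_{X_s\leq x}-\1_{X_t\leq x})$, defer the space part to the one-dimensional weighted oscillation estimate (the paper cites \cite{AGOZ}, Example 4.9, for exactly this), and prove pregaussianity by comparing the $L_2$ distance of the empirical functions with that of an independent sum of the WL Gaussian process and the weighted Wiener/bridge process, then invoking Theorem~\ref{thm:comp} and Lemma~\ref{lem:extension}. Your envelope and pregaussian arguments are essentially the paper's.

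The gap is in what you call the time part, and it is not a technicality --- it is where the paper spends most of its effort. Condition (iii) of Theorem~\ref{thm:agoz} requires a bound on
\[
\sup_{u>0}u^2\,\Pr\Bigl(\sup_{(s,x)\in B}\,w(x)\bigl|\1_{X_s\leq x}-\1_{X_t\leq x}\bigr|>u\Bigr),
\]
where $B$ is the full ball around $(t,y)$ in your metric $\rho_T$, so the supremum inside the probability runs over $s$ \emph{and} $x$ jointly. Your estimate freezes $x$, applies \eqref{WLts}--\eqref{WLst} to get $\Pr(\cdot)\leq 2L\varepsilon^2/w(x)^2$, and multiplies by $u^2\leq w(x)^2$; that yields the weak-$L_2$ bound $2L\varepsilon^2$ \emph{for each fixed} $x$, but says nothing about the union of these events over the uncountably many $x$ in the ball, along which $w(x)$ is unbounded (the $x$-section of the ball reaches down to a point $x_0$ at which $w(x_0)$ may be far larger than $w(y)$). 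One cannot pull $\sup_x$ out of the probability at constant cost, and no single application of the WL-condition controls the joint event. This is precisely what the paper's chaining machinery exists for: Lemma~\ref{lem:ab} decomposes the $x$-range into dyadic blocks $(2^{-k-1}b,2^{-k}b]$, applies the WL-condition once per block and pays the measure $\P(2^{-k-1}b<X_t\leq 2^{-k}b)$ each time; Lemma~\ref{lem6} uses the regular variation of $w$ --- the hypothesis whose role in the paper is exactly this summation --- to get $\sum_k L\varepsilon^2/w(2^{-k}b)^2\leq C\varepsilon^2/w(b)^2$; Lemmas~\ref{lem:x0}, \ref{lem:x1} and \ref{lem:mono} control the accumulated measure terms via $w(y)^2(y-x_0)\leq d(x_0,y)^2\leq\varepsilon^2$; and the case analysis over $x_\alpha=\sup\{x:w(x)>\alpha\}$ in Lemma~\ref{lem:iii}, together with Lemmas~\ref{lem:sty} and \ref{lem:gty}, converts these into the required uniform-in-$u$ bound for the quantity \eqref{eq:B}. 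Without this block, or a genuine substitute for it, your verification of (iii) is incomplete: the ``clean cancellation'' you highlight is an artifact of having fixed the space variable, and the hard case is exactly the one it hides.
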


\begin{remark}\label{rem:main1}
  (1) We require that the function $w(x)$ be symmetric about
  $1/2$ is no loss of generality. As the Brownian bridge has
  the same behavior at $0$ and $1$. Moreover we only give
  the proof of the theorem for $0<x<1/2$. Indeed, if let
  $\tilde X_t:=1-X_t$, then $(\tilde X;w) $ satisfies the
  WL-condition. The result for $\tilde X$ for $0<x\leq 1/2$
  gives a result of $X$ for $1/2<x\leq 1$.
  The fact (cf. \cite{GZ86lec}, Corollary 1.6, p. 61)
  that if
  $\mathcal{F}_1$ and $\mathcal{F}_2$ are Donsker classes,
  then $\mathcal{F}:=\mathcal{F}_1\cup \mathcal{F}_2$ 
 is a Donsker class gives the result for $\mathcal{F}=E\times [0,1]$.

 (2) For a general process $Y:=\{Y_t:t\in E\}$, if we define
  $X:=X_t:=\tilde F_t(Y_t)$, where $\tilde F_t(\cdot)$ is
  the (randomized) distributional transform of the df $F_t$
  of $Y_t$, then $X $ is a uniform process (see
  Lemma~\ref{lem:dist-trans}). Such a process $X$ is called
  a copula process. If we have a CLT for the $X$ process,
  then we have a CLT for the $Y $ process; see
  Proposition~\ref{prop:X2Y} for precise statement. In case
  of $w\equiv 1$, this theorem gives a proof of Theorem
  \ref{thm:kkz} provided that $F_t(\cdot)$ for
  each $t\in E $ is strictly increasing; see
  Corollary~\ref{cor:kkz}.

(3) The integral condition \eqref{eq:1d-weight} is necessary and
sufficient for one dimensional weighted uniform empirical process
under regularity of the weight function; see \cite{AGOZ}, Example 4.9.
\end{remark}

The proof of the theorem is given at the end of this
section.

The following is a possible way that a CLT for the time
dependent empirical process for $Y$ can be obtained from
proving a CLT for the process $X$.

\begin{proposition}\label{prop:X2Y} Let $w(x)$ be any function on
  $(0,1)$. Let $\{Y_t:t\in E\} $ be a process and
  $F_t(\cdot)$ is the df of $Y_t$. Let $X_t:=\tilde F_t(Y_t)$.
Then the following hold\textup{:}
\begin{enumerate}[\textup{(}i\textup{)}]
\item If $F_t(\cdot)$ is strictly increasing for
  each $t\in E $, then
 $$\{w(x)(\1_{X_t\leq x} - x) :t\in E, \, x\in [0,1]\}\in \CLT \text{ in } \ell^\infty(E\times [0,1])$$
implies
\[ \{w(F_t(y))(\1_{Y_t\leq y} - F_t(y)): t\in E, \, y\in \R \}
\in \CLT
 \text{ in } \ell^\infty(E\times \R). \]
\item Without assuming that $F_t(\cdot)$ is strictly increasing
  for each $t\in E$, we have
  $$\{w(x)(\1_{X_t\leq x} - x) :t\in E, \, x\in
  [0,1]\}\in \CLT  \text{ in } \ell^\infty(E\times [0,1])$$
  implies
\[ \{w(F_t(y))(\1_{Y_t\leq y} - F_t(y)): (t,y)\in T_0 \}
\in \CLT \text{ in } \ell^\infty(T_0),
\]
where $T_0$ is any countable subset of $E\times \R$.
\end{enumerate}
\end{proposition}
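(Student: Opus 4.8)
The plan is to reduce both parts to a single reindexing of the $X$-process by the map $\phi(t,y):=(t,F_t(y))$ from $E\times\R$ into $E\times[0,1]$, after showing that the $Y$-indicators and the $X$-indicators agree. First I would fix $(t,y)$ and compare the events $\{Y_t\le y\}$ and $\{X_t\le F_t(y)\}$ using Lemma~\ref{lem:properites-tilde-F}. On $\{Y_t\le y\}$, monotonicity of $\tilde F_t$ (part $(iii)$) together with part $(i)$ gives $X_t=\tilde F_t(Y_t)\le\tilde F_t(y)\le F_t(y)$, so $\{Y_t\le y\}\subseteq\{X_t\le F_t(y)\}$ always holds. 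For a partial converse I would use part $(ii)$: if $Y_t>y$ then $F_t(y)\le\tilde F_t(Y_t)=X_t$, whence $\{X_t<F_t(y)\}\subseteq\{Y_t\le y\}$. Thus, deterministically,
\[
\{X_t<F_t(y)\}\subseteq\{Y_t\le y\}\subseteq\{X_t\le F_t(y)\}.
\]

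For part $(i)$, strict monotonicity of $F_t$ upgrades the left inclusion: by part $(iv)$, $Y_t>y$ forces $F_t(y)<X_t$, so $\{X_t\le F_t(y)\}\subseteq\{Y_t\le y\}$ and the two events coincide \emph{for every} sample point and every value of the auxiliary uniform. Hence $\1_{Y_t\le y}=\1_{X_t\le F_t(y)}$ identically, and since the weight $w(F_t(y))$ is common to both sides, the $Y$-process $\nu_n^Y(t,y):=n^{-1/2}\sum_{i\le n}w(F_t(y))(\1_{Y_i(t)\le y}-F_t(y))$ equals the $X$-process $\nu_n^X(t,x):=n^{-1/2}\sum_{i\le n}w(x)(\1_{X_i(t)\le x}-x)$ evaluated at $x=F_t(y)$, i.e. $\nu_n^Y=\nu_n^X\circ\phi$. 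I would then introduce the pullback $\Phi\colon\ell^\infty(E\times[0,1])\to\ell^\infty(E\times\R)$, $(\Phi h)(t,y):=h(t,F_t(y))$, which is linear and norm-decreasing, hence continuous, and which satisfies $\Phi(\nu_n^X)=\nu_n^Y$. The hypothesis that the $X$-process lies in $\CLT$ means $\nu_n^X$ converges weakly in $\ell^\infty(E\times[0,1])$ to a centered Radon measure; applying the continuous mapping theorem to $\Phi$ then yields weak convergence of $\nu_n^Y$ in $\ell^\infty(E\times\R)$, which is the assertion.

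For part $(ii)$ strict monotonicity is dropped, so I retain only the two-sided inclusion above, but now I bring in that $X_t=\tilde F_t(Y_t)$ is uniform on $[0,1]$ by Lemma~\ref{lem:dist-trans}, so $\P(X_t=F_t(y))=0$. The two outer events therefore differ by a null set, giving $\1_{Y_t\le y}=\1_{X_t\le F_t(y)}$ only \emph{almost surely}, for each fixed $(t,y)$; consequently $\nu_n^Y=\Phi(\nu_n^X)$ holds pointwise a.s. rather than surely. I would then restrict to a countable $T_0\subseteq E\times\R$: over $T_0$ the union of the exceptional null sets (one per index, per $n$) is again null, so for each $n$ the identity $\nu_n^Y=\Phi(\nu_n^X)$ holds simultaneously for all indices in $T_0$ outside a single null set, i.e. as elements of $\ell^\infty(T_0)$. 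Composing $\Phi$ with restriction to $T_0$ and invoking the continuous mapping theorem once more gives the $\CLT$ in $\ell^\infty(T_0)$.

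The genuinely delicate point—and the only place the two parts diverge—is exactly this passage from ``a.s. for each $(t,y)$'' to ``a.s. for all $(t,y)$ simultaneously.'' Without strict monotonicity the coincidence of the two indicator processes can fail on the random set where some $X_t$ hits the value $F_t(y)$, and over an uncountable index set the corresponding null sets need not have null union, so $\nu_n^Y$ and $\Phi(\nu_n^X)$ need not agree as elements of $\ell^\infty(E\times\R)$. Countability of $T_0$ is precisely what neutralizes this, and is the reason $(ii)$ is stated only for countable subsets while $(i)$ holds on all of $E\times\R$. Everything else is routine: the inclusions are immediate from Lemma~\ref{lem:properites-tilde-F}, and the transfer of weak convergence is a one-line continuous-mapping argument once $\Phi$ is recognized as a continuous (indeed norm-decreasing) linear map.
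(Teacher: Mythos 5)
Your proposal is correct and follows essentially the same route as the paper: both proofs establish the deterministic indicator identity $\1_{Y_t\le y}=\1_{\tilde F_t(Y_t)\le F_t(y)}$ via Lemma~\ref{lem:properites-tilde-F} (parts (i)--(iii) for one inclusion, part (iv) under strict monotonicity for the other), and in part (ii) downgrade it to an almost-sure identity per fixed $(t,y)$ using that $X_t$ is uniform, with countability of $T_0$ controlling the union of null sets. The only difference is presentational: where the paper transfers the CLT by ``substituting $x$ with $F_t(y)$,'' you make the same step explicit as a continuous-mapping argument for the norm-decreasing pullback $h\mapsto h(\cdot,F_\cdot(\cdot))$.
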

\begin{proof}
  {Proof of $(i)$}.  Recall that $\tilde F(x)\leq
  \tilde F(y) $ for $x\leq y$ and $\tilde F(x)\leq F(x)$ for
  all $x\in \R$ and for any df $F$ (see
  Lemma~\ref{lem:properites-tilde-F}). Hence $Y_t\leq y $
  implies that $\tilde F_t(Y_t) \leq F_t(y)$; i.e.
\begin{equation}\label{eq:right}
\1_{Y_t\leq y}\leq \1_{\tilde F_t(Y_t) \leq F_t(y)}, 
\text{ uniformly in } t\in E, \, y\in \R.
\end{equation}
Since $F_t(\cdot)$ is strictly increasing, by the same lemma
if $x<y$, then $F(x) < \tilde F(y) $. Now if $\tilde
F_t(Y_t) \leq F_t(y)$ and $Y_t> y $ for some $t\in E$ and
$y\in \R $, then $F_t(y)< \tilde F_t(Y_t)$.  We have a
contradiction: $F_t(y) < F_t(y)$ . Thus $\tilde F_t(Y_t)
\leq F_t(y)$ implies $Y_t\leq y $; i.e.
\begin{equation*}
\1_{Y_t\leq y}\geq \1_{\tilde F_t(Y_t) \leq F_t(y)}, 
\text{ uniformly in } t\in E, \, y\in \R. 
\end{equation*}
Combining the two displays, we have
\begin{equation}\label{eq:X2Y}
  \1_{Y_t\leq y}=\1_{\tilde F_t(Y_t) \leq F_t(y)},\text{ uniformly in } t\in E, \, y\in \R. 
\end{equation}

Since $\{F_t(y): t\in E, \, y\in \R\}$ is a subset of
$[0,1]$, thus if the empirical process based on
$\{w(x)(\1_{\tilde F_t(Y_t) \leq x}-x):t\in E , \, x\in
[0,1]\} $ satisfies CLT in $\ell^\infty(E\times [0,1])$,
then, by substituting $x $ with $F_t(y) $ and using
\eqref{eq:X2Y}, the empirical process based on $\{w(F_t(y))
(\1_{Y_t\leq y} - F_t(y)): t\in E , \, y\in \R \}$
satisfies the CLT in $\ell^\infty(E \times \R)$.

{Proof of $(ii)$}. Fix $t\in E$ and $y\in \R$.  If
$\tilde F_t(Y_t)\leq F_t(y)$, since $\tilde F_t(Y_t)=F_t(y)$
has probability zero, then, after throwing out this null
set, $\tilde F_t(Y_t)< F_t(y)$, which will imply $Y_t\leq
y$. If not, then $Y_t>y$, by
Lemma~\ref{lem:properites-tilde-F}, hence $F_t(y)\leq \tilde
F_t(Y_t)$. Again we have a contradiction
$F_t(y)<F_t(y)$. Thus almost surely $\1_{\tilde F_t(Y_t)
  \leq F_t(y)} \leq \1_{Y_t\leq y}$. Combining this with
\ref{eq:right} gives, almost surely,
\begin{equation}\label{eq:countable-T_0}
  \1_{Y_t\leq y}=\1_{\tilde F_t(Y_t) \leq F_t(y)},\text{ uniformly in } (t,y)\in T_0, 
\end{equation}
where $T_0$ is any countable set in $E\times \R$.
Restricting to the countable set, we have the stated
implication as in $(i)$.
\end{proof}
\begin{corollary}[cf. \cite{KKZ10}, Theorem 3]\label{cor:kkz}
  Let $Y:=\{Y_t:t\in E\}$ be a process.  Let $F_t$
  be the df of $Y_t $. In addition, assume that $F_t(\cdot)$
  is strictly increasing for each $t\in E $ and that $Y$
  satisfies the L-condition\textup{:}
\begin{equation}\label{L}
\sup_{t\in E}\Pr(\sup\limits_{s:\rho(s,t)\leq \varepsilon} |\tilde
F_t(Y_t) - \tilde F_t(Y_s)| >\varepsilon^2)\leq L
\varepsilon^2,
\end{equation} for a constant $L$ and a continuous Gaussian metric
$\rho(s,t)$ on $E$.
Then
\[ \{ \1_{Y_t\leq y} - \P(Y_t\leq y): t\in E, \, y\in \R \}
\in \CLT \text{ in } \ell^\infty(E\times \R).
\]
\end{corollary}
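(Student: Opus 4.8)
The plan is to deduce the corollary from the main theorem, Theorem~\ref{thm:main1}, specialized to the constant weight $w\equiv 1$, followed by the transfer device of Proposition~\ref{prop:X2Y}(i). First I would set $X_t:=\tilde F_t(Y_t)$. By Lemma~\ref{lem:dist-trans} this is a uniform process on $E$, so it is a legitimate input for Theorem~\ref{thm:main1}. If I can verify the hypotheses of that theorem for the pair $(X;1)$, I obtain that the empirical process based on $\{\1_{X_t\le x}-x:t\in E,\,x\in[0,1]\}$ converges weakly in $\ell^\infty(E\times[0,1])$. Since $F_t$ is strictly increasing by assumption, Proposition~\ref{prop:X2Y}(i) then upgrades this to weak convergence of the process based on $\{\1_{Y_t\le y}-F_t(y):t\in E,\,y\in\R\}$ in $\ell^\infty(E\times\R)$; as $F_t(y)=\P(Y_t\le y)$, this is exactly the asserted conclusion.

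Most of the hypotheses of Theorem~\ref{thm:main1} are vacuous or immediate for $w\equiv 1$. The weight $w\equiv 1$ is continuous, positive, symmetric about $1/2$, non-increasing, and bounded on every $[\gamma,1/2]$; moreover $xw(x)^2=x$ is non-decreasing, and $w$ is regularly varying (of index $0$) near the origin, so I may take any $\gamma\in(0,1/2]$. The integral condition \eqref{eq:1d-weight} reduces to $\int_0^\gamma s^{-1}\exp(-c/s)\,ds<\infty$ for every $c>0$, which holds because the substitution $u=c/s$ turns it into $\int_{c/\gamma}^\infty u^{-1}e^{-u}\,du<\infty$. The tail hypothesis is trivial as well: since $\sup_{t\in E}w(X_t)=1$, one has $\Pr(\sup_{t}w(X_t)>\alpha)=0$ for $\alpha>1$, hence $\alpha^2\Pr(\sup_t w(X_t)>\alpha)\to 0$.

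The substantive step is to derive the WL-condition \eqref{WLts}--\eqref{WLst} for $(X;1)$ from the L-condition \eqref{L} for $Y$. Fixing $t$ and $x$, I would estimate $\Pr(\sup_{s:\rho(s,t)\le\varepsilon}\1_{X_t\le x<X_s}>0)$ by splitting on the position of $X_t$ relative to $x$. On the event $\{x-\varepsilon^2<X_t\le x\}$ the uniform law of $X_t$ gives probability at most $\varepsilon^2$. On the complementary event $\{X_t\le x-\varepsilon^2\}$, the occurrence of $X_t\le x<X_s$ for some nearby $s$ should force the reference-transform increment $|\tilde F_t(Y_t)-\tilde F_t(Y_s)|$ to exceed $\varepsilon^2$, so that this part is controlled by \eqref{L}. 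Here I would use the monotonicity properties of the distributional transform in Lemma~\ref{lem:properites-tilde-F} together with the strict monotonicity of $F_t$ to pass from the level-crossing of $X_s$ to a lower bound on $\tilde F_t(Y_s)$. The second inequality \eqref{WLst} is handled by the symmetric argument. Adding the two contributions yields a bound of the form $(L+1)\varepsilon^2$, which establishes the WL-condition with an enlarged constant.

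The main obstacle is precisely this last passage. The WL-condition is phrased through $X_s=\tilde F_s(Y_s)$, i.e.\ the own-time transform of $Y_s$, whereas the L-condition controls $|\tilde F_t(Y_t)-\tilde F_t(Y_s)|$, the reference-time transform $\tilde F_t$ applied to both coordinates. These two transforms of $Y_s$ need not be pointwise comparable unless the marginals $F_s$ and $F_t$ are themselves comparable, so the implication ``$X_s>x\Rightarrow\tilde F_t(Y_s)$ large'' is not automatic; it is exactly where the strict monotonicity of the marginals and the transform inequalities of Lemma~\ref{lem:properites-tilde-F} must be combined most carefully. I expect this reconciliation, rather than the routine verifications for $w\equiv 1$, to carry the technical weight of the argument.
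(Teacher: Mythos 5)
Your overall strategy is exactly the paper's: set $X_t:=\tilde F_t(Y_t)$, verify the hypotheses of Theorem~\ref{thm:main1} with $w\equiv 1$ (your routine verifications of the weight, integral, and envelope conditions are fine), and then transfer via Proposition~\ref{prop:X2Y}(i) using strict monotonicity of the $F_t$. But the substantive step --- deriving the WL-condition for $(X;1)$ from the L-condition for $Y$ --- is left open in your proposal: you correctly identify that the WL-condition involves the own-time transform $X_s=\tilde F_s(Y_s)$ while the L-condition controls the reference-time quantity $|\tilde F_t(Y_t)-\tilde F_t(Y_s)|$, you say this passage ``should force'' the needed inequality, and you then concede it is ``not automatic.'' That concession is the gap, and the tools you propose to close it (strict monotonicity of $F_t$ plus the pointwise inequalities of Lemma~\ref{lem:properites-tilde-F}) cannot close it: those give relations between $\tilde F$ and $F$ for a \emph{single} distribution function, whereas what is needed is a comparison between the two \emph{different} transforms $\tilde F_s$ and $\tilde F_t$ evaluated at the same point $Y_s$. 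Strict monotonicity places no constraint whatsoever on how far apart $F_s$ and $F_t$ are.

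The missing ingredient is quantitative and comes from the L-condition itself, via Lemma~1 of \cite{KKZ10}: the L-condition implies
$\sup_x |F_t(x)-F_s(x)|\leq 2(L+1)\rho(s,t)^2$, hence (taking left limits and using the definition of the distributional transform)
$\sup_x |\tilde F_t(x)-\tilde F_s(x)|\leq 6(L+1)\rho(s,t)^2$. With this uniform bound one writes, on the complement of the L-condition event $A:=\{\sup_{s:\rho(s,t)\leq\varepsilon}|\tilde F_t(Y_t)-\tilde F_t(Y_s)|>\varepsilon^2\}$,
\begin{align*}
|\tilde F_s(Y_s)-\tilde F_t(Y_t)| &\leq |\tilde F_s(Y_s)-\tilde F_t(Y_s)|+|\tilde F_t(Y_s)-\tilde F_t(Y_t)| \leq 6(L+1)\rho(s,t)^2+\varepsilon^2\leq (6L+7)\varepsilon^2,
\end{align*}
so that on $A^c$ the crossing event $\{\tilde F_s(Y_s)\leq x<\tilde F_t(Y_t)\}$ forces $\tilde F_t(Y_t)\in(x,x+(6L+7)\varepsilon^2]$, an event of probability at most $(6L+7)\varepsilon^2$ because $\tilde F_t(Y_t)$ is uniform; adding $\P(A)\leq L\varepsilon^2$ gives the WL-condition with constant $7L+7$ (and symmetrically for the other inequality). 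This is what carries the technical weight, and it is absent from your argument; note also that the paper's verification of the WL-condition makes no use of strict monotonicity at all --- that hypothesis enters only through Proposition~\ref{prop:X2Y}(i).
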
 
\begin{remark} Under the L-condition, we will see from the
  proof of Theorem~\ref{thm:iid-pregaussian} that there is a
  countable dense set in $E\times \R$ with respect to the
  $L_2$ distance of the limiting Gaussian
  process. Hence without the restriction that $F_t(\cdot)$
  is strictly increasing, we still have a CLT but on a
  countable dense set.
\end{remark}

\begin{proof}[Proof of Corollary ~\ref{cor:kkz}]
By part ($i$) of Proposition~\ref{prop:X2Y}, we only need 
to check the conditions in Theorem~\ref{thm:main1}
with $w(x)\equiv 1$. 

Under the L-condition, we have (cf. \cite{KKZ10}, Lemma 1)
\[ \sup_x |F_t(x) - F_s(x)| \leq 2(L+1)\rho(s,t)^2.
\] Consequently by passing to the limit,
\[ \sup_x |F_t(x-) - F_s(x-)| \leq 2(L+1)\rho(s,t)^2.
\] Recalling that $\tilde F_s(x)=F_s(x-) + V(F_s(x) -
F_s(x-))$, we obtain
\begin{align*} \sup_x |\tilde F_t(x) - \tilde F_s(x)| &\leq
\sup_x |F_t(x-) - F_s(x-)|+\sup_x |V(F_t(x) - F_s(x))| \\
&\qquad +
\sup_x |V(F_t(x-) - F_s(x-))| \\ &\leq 6(L+1)\rho(s,t)^2.
\end{align*} For $t\in E$ fixed, let $A:=\{
\sup\limits_{s:\rho(s,t)\leq \varepsilon} |\tilde F_t(Y_t) -
\tilde F_t(Y_s)| >\varepsilon^2\}$. \\
On the complement, $A^c$, of $A$, we have for
all $s$ with $\rho(s,t)\leq \varepsilon$,
\begin{align*} |\tilde F_s(Y_s) - \tilde F_t(Y_t) |&\leq
|\tilde F_s(Y_s) - \tilde F_t(Y_s) | + |\tilde F_t(Y_s) -
\tilde F_t(Y_t) |\\ &\leq 6(L+1) \rho(s,t)^2 +
\varepsilon^2\\ &\leq (6L+7)\varepsilon^2.
\end{align*} Hence
\begin{align*} \Pr(\sup_{s:\rho(s,t)\leq \varepsilon}
\1_{\tilde F_s(Y_s)\leq x < \tilde F_t(Y_t)}>0)
&=\Pr(A^c,\sup_{s:\rho(s,t)\leq \varepsilon} \1_{\tilde F_s(Y_s)\leq x
< \tilde F_t(Y_t)}>0) \\
&\qquad + \Pr(A, \sup_{s:\rho(s,t)\leq
\varepsilon} \1_{\tilde F_s(Y_s)\leq x < \tilde
F_t(Y_t)}>0)\\ &\leq \P(A^c, \1_{\tilde F_t(Y_t)-( 6(L+1)
\varepsilon^2 + \varepsilon^2)\leq x < \tilde F_t(Y_t)}>0) +
L\varepsilon^2
\intertext{Keeping in mind that $\tilde F_t(Y_t)\stackrel{d}= U(0,1)$}
&\leq (7L+7)\varepsilon^2 .
\end{align*}
Similarly, $$\Pr(\sup_{s:\rho(s,t)\leq \varepsilon}
\1_{\tilde F_t(Y_t)\leq x < \tilde F_s(Y_s)}>0)\leq
(7L+7)\varepsilon^2.$$ 

In addition, obviously for $w(x) \equiv 1$
\[\lim\limits_{\alpha\to \infty} \alpha^2
\Pr(\sup_{t\in E} w(\tilde F_t(Y_t))>\alpha) = 0 .\] Thus we have
verified the conditions in Theorem~\ref{thm:main1}.
\end{proof}

We will prove Theorem~\ref{thm:main1} only for $0<x<1/2$
as explained in Remark~\ref{rem:main1}. We
will check the pre-Gaussian condition $(ii)$ and the local modulus
condition $(iii)$ in Theorem~\ref{thm:agoz}.

\subsection{Pre-Gaussian}
Let $\{G_0((s,x)): s\in E,\,x\in [0,1]\}$ be the zero mean
Gaussian process with covariance
\begin{equation}\label{eq:G_0}
 \E G_0(s,x)G_0(t,y))=w(x)w(y)\P(X_s\leq x, X_t\leq y).
\end{equation}
Under the assumptions of Theorem \ref{thm:main1}, we will
prove $G_0(s,x)$ has a version with bounded and uniformly
continuous sample paths with its $L_2$ distance $d_{G_0} $
by comparing it with some other continuous Gaussian
distance; consequently by another comparison the
centered Gaussian process with covariance
\begin{equation}\label{eq:G_c}
 \E G(s,x)G(t,y)):=w(x)w(y)[\P(X_s\leq x, X_t\leq y)-xy]
\end{equation}
has a version with bounded and uniformly continuous sample
paths with its $L_2$ distance $d_G$, which is equivalent to
say the process $\{w(y)(\1_{X_t\leq y} - y):\, t\in E, y \in
[0,1]\}$ is pre-Gaussian.
\begin{lemma}[see \cite{AGOZ}, Example 4.8]
  Let $W(y)$ be a Brownian motion and $w(y)$ as in
  Theorem~\ref{thm:main1}. Then the Gaussian process
  $\{w(y)W(y):y\in [0,1] \}$ is sample bounded and uniformly
  continuous w.r.t. its $L_2$ distance, which is given by
\begin{equation}\label{dw}
d(x,y)^2:=\E(w(y)W(y)-w(x)W(x))^2=w(x\vee y)^2|y-x|+(x\wedge
y)(w(x)-w(y))^2.
\end{equation}
\end{lemma}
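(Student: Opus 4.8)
The plan is to reduce everything to a neighborhood of the origin and to extract almost-sure decay there from the integral condition \eqref{eq:1d-weight}. First, the distance formula \eqref{dw} is immediate from $\E W(x)W(y)=x\wedge y$: expanding gives $\E(w(y)W(y)-w(x)W(x))^2 = w(y)^2y-2w(x)w(y)(x\wedge y)+w(x)^2x$, and for $x\le y$ the right side equals $w(y)^2(y-x)+x(w(x)-w(y))^2$, which is the stated expression. Because $w$ is symmetric about $1/2$ and, by Remark~\ref{rem:main1}, the argument is needed only for $x\in(0,1/2]$, I will work on $[0,1/2]$, where $w$ is non-increasing near $0$ and bounded at $1/2$; the far endpoint is handled by the same reduction. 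On any compact subinterval of $(0,1/2]$ the quantities $d(x,y)^2$ and $|x-y|$ are comparable (as $w$ is positive and continuous there), so $y\mapsto w(y)W(y)$ is automatically bounded and $d$-continuous away from $0$. The entire content of the lemma is thus the behavior as $y\to 0$.

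The heart of the proof is to show that, almost surely, $\lim_{y\to 0^+}w(y)W(y)=0$, which I would establish by a dyadic blocking argument. On $I_k:=(2^{-(k+1)},2^{-k}]$ with $2^{-k}\le\gamma$, monotonicity gives $\sup_{y\in I_k}w(y)=w(2^{-(k+1)})$, while Brownian scaling gives $\sup_{0\le y\le 2^{-k}}|W(y)|\stackrel{d}{=}2^{-k/2}\sup_{[0,1]}|W|$. Combined with the reflection bound $\P(\sup_{[0,1]}|W|>\lambda)\le 4e^{-\lambda^2/2}$, this yields for each $\eta>0$ the estimate $\P(\sup_{I_k}|wW|>\eta)\le 4\exp(-2^{k}\eta^2/(2w(2^{-(k+1)})^2))$. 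Writing $s_k=2^{-k}$ and using that $sw(s)^2$ is non-decreasing (hence $w(s_k/2)^2\le 2w(s_k)^2$), the exponent is bounded above by $-\eta^2/(4s_kw(s_k)^2)$.

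Next I would convert the integral condition into summability. Since $sw(s)^2$ is non-decreasing, $s\mapsto\exp(-c/(sw(s)^2))$ is non-decreasing, so a block comparison $\int_{s_{k+1}}^{s_k}s^{-1}\exp(-c/(sw(s)^2))\,ds\ge(\log 2)\exp(-c/(s_{k+1}w(s_{k+1})^2))$ shows that \eqref{eq:1d-weight} makes $\sum_k\exp(-c/(s_kw(s_k)^2))<\infty$ for every $c>0$; taking $c=\eta^2/4$ gives $\sum_k\P(\sup_{I_k}|wW|>\eta)<\infty$. Borel--Cantelli then forces $\limsup_{y\to0}|w(y)W(y)|\le\eta$ a.s., and letting $\eta\downarrow0$ along $1/m$ proves the claim. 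The integral condition also forces $yw(y)^2\to0$ (otherwise the integrand is bounded below near $0$ and the integral diverges), so $d(0,y)\to0$; since $d$ separates points (as $d(x,y)^2\ge w(x\vee y)^2|x-y|$), it is a genuine metric, and the identity map $([0,1/2],|\cdot|)\to([0,1/2],d)$ is a continuous bijection of a compact space onto a Hausdorff space, hence a homeomorphism. Consequently the path, after setting its value at $0$ equal to $0$, is continuous on $([0,1/2],d)$, which is $d$-compact, and is therefore bounded and uniformly $d$-continuous.

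The routine ingredients are the covariance identity and the interior continuity, where the metrics are trivially comparable. The real work, and the only place the hypotheses on $w$ are genuinely used, is the passage from \eqref{eq:1d-weight} to the almost-sure decay at $0$: the delicate points are exploiting the monotonicity of both $w$ and $sw(s)^2$ to turn the Gaussian tail bound into a summand matching the integrand of \eqref{eq:1d-weight}, and carrying out the sum--integral comparison cleanly. Regular variation of $w$ is available but, for this lemma, the monotonicity of $sw(s)^2$ alone supplies the bound $w(s/2)^2\le 2w(s)^2$ needed to align the exponents.
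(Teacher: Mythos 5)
The paper itself gives no proof of this lemma --- it is quoted from \cite{AGOZ}, Example 4.8 --- so your argument stands on its own, and its core is correct: the covariance expansion yielding \eqref{dw}; the dyadic blocks $I_k=(2^{-(k+1)},2^{-k}]$ with the scaling/reflection tail bound; the double use of the monotonicity of $sw(s)^2$, first to align the Gaussian exponent with the integrand of \eqref{eq:1d-weight} via $w(s/2)^2\le 2w(s)^2$, then to run the sum--integral comparison; Borel--Cantelli to get $w(y)W(y)\to 0$ a.s.\ as $y\to 0^+$; and the packaging of sample boundedness and uniform $d$-continuity by observing that the identity is a continuous bijection, hence a homeomorphism, from $([0,1/2],|\cdot|)$ onto $([0,1/2],d)$, so that $([0,1/2],d)$ is compact. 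This is essentially the classical Chibisov--O'Reilly mechanism that the cited Example 4.8 encapsulates, and you are right that only monotonicity, not regular variation, is needed. (One minor rewording: on compact subintervals of $(0,1/2]$ the metric $d$ and the Euclidean metric are topologically equivalent rather than ``comparable'', since $w$ need not be Lipschitz; your subsequent compactness argument makes this immaterial.)

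The genuine gap is the sentence claiming that ``the far endpoint is handled by the same reduction.'' It is not, and in fact cannot be. Time reversal is a symmetry of the Brownian bridge, not of Brownian motion: writing $\tilde W(s):=W(1)-W(1-s)$ (again a Brownian motion) and using $w(1-s)=w(s)$, one gets $w(1-s)W(1-s)=w(s)W(1)-w(s)\tilde W(s)$; your argument kills the second term as $s\to 0^+$, but the first term diverges a.s.\ whenever $w$ is unbounded at $0$, because $W(1)\neq 0$ a.s. More structurally, $\E\big(w(y)W(y)\big)^2=yw(y)^2\to\infty$ as $y\to 1^-$ in the unbounded case, and a centered Gaussian process with unbounded variances admits no sample-bounded version, so the assertion over all of $y\in[0,1]$ is simply false there --- this is a defect of the lemma's formulation (with $w$ symmetric about $1/2$ and $W$ a Brownian motion) rather than something a better proof could repair. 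What your argument actually proves is the lemma on $[0,1/2]$, equivalently for weights singular only at $0$, and that is precisely the version the paper needs: by Remark~\ref{rem:main1} the main theorem is only proved for $0<x\le 1/2$, the endpoint $1$ being handled at the level of the empirical process through $\tilde X_t:=1-X_t$, where the relevant limiting object (the bridge, not $W$) does have the symmetric behavior at $0$ and $1$. You should either state your result for $[0,1/2]$ explicitly, or note that the $[0,1]$ formulation requires replacing $W$ by a Brownian bridge or assuming $w$ bounded near $1$.
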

\begin{lemma}\label{lem:mono} If $xw(x)^2$ is non-decreasing
and $w(x)$ is non-increasing for $0<x<\delta$, then
$$
 d(x,y)\leq d(x,z)
$$
for $0<x\leq y\leq z\leq \delta$.
\end{lemma}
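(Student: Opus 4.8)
The plan is to fix $x$ and regard $d(x,u)^2$ as a function of the second argument $u\in[x,\delta]$, then show this function is non-decreasing; applying this to $u=y$ and $u=z$ with $y\le z$ gives the claim. First I would use the ordering $0<x\le u\le\delta$ to simplify the distance, so that $x\vee u=u$ and $x\wedge u=x$. From \eqref{dw} this reads
\[
d(x,u)^2=w(u)^2(u-x)+x\bigl(w(x)-w(u)\bigr)^2.
\]

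The key step is to expand the square and collect terms: the contributions $-x\,w(u)^2$ from the first summand and $+x\,w(u)^2$ from the expanded square cancel, leaving the identity
\[
d(x,u)^2=u\,w(u)^2+x\,w(x)^2-2x\,w(x)\,w(u).
\]
The point of this rearrangement is that each $u$-dependent summand is now manifestly monotone in $u$.

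Now I would argue monotonicity term by term. The first term $u\,w(u)^2$ is non-decreasing in $u$ by the hypothesis that $xw(x)^2$ is non-decreasing. The middle term $x\,w(x)^2$ does not depend on $u$. For the last term, since $w>0$ we have the constant factor $x\,w(x)>0$, and since $w$ is non-increasing, $-2x\,w(x)\,w(u)$ is non-decreasing in $u$. Hence $d(x,u)^2$ is a sum of non-decreasing functions of $u$ together with a constant, so it is non-decreasing on $[x,\delta]$.

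Therefore $d(x,y)^2\le d(x,z)^2$ whenever $x\le y\le z\le\delta$, and taking square roots yields $d(x,y)\le d(x,z)$. I do not expect a genuine obstacle here: the only substantive move is spotting the cancellation that exhibits $d(x,u)^2$ as a sum of monotone pieces, after which the conclusion follows immediately from the two structural hypotheses on $w$.
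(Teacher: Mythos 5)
Your proof is correct and essentially identical to the paper's: both rest on the same expansion $d(x,u)^2 = uw(u)^2 + xw(x)^2 - 2xw(x)w(u)$ and then invoke the monotonicity of $uw(u)^2$ and of $-w(u)$ exactly as the paper does when comparing the expressions at $y$ and $z$. Phrasing it as monotonicity of $d(x,u)^2$ in $u$ is just a repackaging of the paper's direct inequality, not a different route.
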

\begin{proof} Let $0<x\leq y\leq z\leq \delta$.  Using
  definition \eqref{dw} and the monotonicity of $xw(x)^2$
  and $w(x)$, we obtain
\begin{align*} d(x,y)^2&=w(y)^2(y-x)+x(w(y)-w(x))^2 \\
&=xw(x)^2 + yw(y)^2 -2xw(x)w(y) \\ &\leq xw(x)^2 + zw(z)^2
-2xw(x)w(z) \\ &=d(x,z)^2. \qedhere
\end{align*}
\end{proof}

Next we give an upper bound for $d_{G_0}$ under WL-condition
in Theorem~\ref{thm:main1}.

\begin{lemma}\label{lem:upper} Let $d(x,y)$ be as in
  \eqref{dw} and $d_{G_0}((s,x),(t,y))$ the $L_2$ distance
  of the Gaussian process $G_0$ in \eqref{eq:G_0}.  Then under
  the WL-condition, we have
$$
 d_{G_0}^2((s,x), (t,y))\leq 2d^2(x,y)+4L\rho(s,t)^2.
$$
\end{lemma}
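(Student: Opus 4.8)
The plan is to control $d_{G_0}$ by separating the increment of $G_0$ into a pure time increment at a fixed level and a pure level increment at a fixed time, and then to handle the two pieces by the WL-condition and by an exact identity, respectively. Concretely, I would insert the intermediate point $(t,x)$ and write
$$
G_0(s,x) - G_0(t,y) = \bigl(G_0(s,x) - G_0(t,x)\bigr) + \bigl(G_0(t,x) - G_0(t,y)\bigr).
$$
Applying Minkowski's inequality in $L_2(\P)$ and then $(a+b)^2 \le 2a^2 + 2b^2$ reduces the lemma to the two estimates $d_{G_0}^2((s,x),(t,x)) \le 2L\rho(s,t)^2$ and $d_{G_0}^2((t,x),(t,y)) \le d(x,y)^2$; in fact the second will turn out to be an equality.

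For the time increment I would expand the squared distance using the covariance \eqref{eq:G_0} together with the fact that every marginal is uniform, so that $\E G_0(s,x)^2 = w(x)^2\P(X_s \le x) = w(x)^2 x$. This gives
$$
d_{G_0}^2((s,x),(t,x)) = 2w(x)^2\bigl(x - \P(X_s \le x, X_t \le x)\bigr) = 2w(x)^2\,\P(X_s \le x < X_t),
$$
the last step because $\P(X_s \le x) = x$. For the fixed pair $(s,t)$ the event $\{X_s \le x < X_t\}$ is contained in $\{\supl_{s':\rho(s',t)\le \varepsilon}\1_{X_{s'}\le x < X_t} > 0\}$ once $\varepsilon = \rho(s,t)$, so by monotonicity of outer probability and the WL-condition \eqref{WLst} one gets $\P(X_s \le x < X_t) \le L\rho(s,t)^2/w(x)^2$, whence $d_{G_0}^2((s,x),(t,x)) \le 2L\rho(s,t)^2$.

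For the level increment I would fix $t$ and use uniformity once more: $\P(X_t \le x, X_t \le y) = x \wedge y$, which is exactly the Brownian covariance. A direct computation then yields
$$
d_{G_0}^2((t,x),(t,y)) = w(x)^2 x + w(y)^2 y - 2w(x)w(y)(x\wedge y),
$$
and expanding \eqref{dw} in the case $x \le y$ (the general case following by symmetry of $d$) shows this equals $d(x,y)^2$. Combining the two displays through the reduction above gives $d_{G_0}^2((s,x),(t,y)) \le 4L\rho(s,t)^2 + 2d(x,y)^2$, as claimed. I expect the only point requiring care to be the transition from the supremum form of the WL-condition to the probability of the single event $\{X_s \le x < X_t\}$; this is resolved by choosing $\varepsilon = \rho(s,t)$ and using monotonicity of outer probability, after which everything reduces to exact identities forced by the uniform marginals.
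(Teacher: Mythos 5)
Your proposal is correct and follows essentially the same route as the paper's own proof: both insert the intermediate point $(t,x)$, apply $(a+b)^2\le 2a^2+2b^2$, bound the resulting time increment by the WL-condition with $\varepsilon=\rho(s,t)$, and identify the level increment exactly with $d(x,y)^2$. The only cosmetic difference is that the paper phrases the computation through the indicator representation $d_{G_0}^2((s,x),(t,y))=\E|w(x)\1_{X_s\le x}-w(y)\1_{X_t\le y}|^2$ and invokes both inequalities \eqref{WLts} and \eqref{WLst}, whereas you compute directly with the covariance of $G_0$ and exploit the uniform marginals (which force $\P(X_s\le x<X_t)=\P(X_t\le x<X_s)$) so that \eqref{WLst} alone suffices.
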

\begin{proof} First observe that for $t\in E$
\begin{equation}\label{eq:d}
d(x,y)^2=\E(w(y)W(y)-w(x)W(x))^2=\E |w(x)\1_{X_t \leq x} -
w(y)\1_{X_t \leq y}|^2.
\end{equation} Using, by the WL-condition for fixed $s$ and $t$,
\begin{equation}\label{eq:wl} \P(X_s \leq x < X_t)\leq
\frac{L\rho(s,t)^2}{w(x)^2} \text{ and } \P(X_t \leq x <
X_s)\leq \frac{L\rho(s,t)^2}{w(x)^2},
\end{equation} 
we obtain
\begin{align} &\quad d_{G_0}((s,x),(t,y))^2 \label{eq:d_G_0}\\ 
&\quad= \E
|w(x)\1_{X_s \leq x} - w(y)\1_{X_t \leq y}|^2 \nonumber \\ 
&\quad= \E
|w(x)\1_{X_s \leq x} - w(x)\1_{X_t \leq x} + w(x)\1_{X_t
\leq x} - w(y)\1_{X_t \leq y}|^2 \nonumber\\ 
&\quad\leq 2\E
|w(x)\1_{X_s \leq x} - w(x)\1_{X_t \leq x}|^2 + 2\E
|w(x)\1_{X_t \leq x} - w(y)\1_{X_t \leq y}|^2 \nonumber\\ 
&\quad=
2w(x)^2\E |1_{X_s \leq x} - \1_{X_t \leq x}|^2 + 2d(x,y)^2
\qquad \text{ by } \eqref{eq:d} \nonumber\\ 
&\quad \leq2w(x)^2(\P(X_s
\leq x < X_t)+ \P(X_t \leq x < X_s)) + 2d(x,y)^2\nonumber \\ 
&\quad
\leq 4L\rho(s,t)^2 + 2d(x,y)^2 \qquad \text{ by }
\eqref{eq:wl}. \nonumber \qedhere
\end{align}
\end{proof}

\begin{corollary}\label{cor:preg} Under the WL-condition,
  the process $G_0(t,y)$ is sample bounded and uniformly
  continuous with respect to its $L_2$ distance; the same is true for
a zero mean Gaussian process with covariance 
\begin{equation}\label{eq:G}
 \E G(s,x)G(t,y)):=w(x)w(y)[\P(X_s\leq x, X_t\leq y)-xy].
\end{equation}
\end{corollary}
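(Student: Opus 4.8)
The plan is to dominate the intrinsic distance $d_{G_0}$ by the $L_2$ distance of an explicit, manifestly well-behaved Gaussian process built as an independent sum, invoke the comparison Theorem~\ref{thm:comp}, and then transfer the conclusion to $G$ by a second, essentially free, comparison. The key input is Lemma~\ref{lem:upper}, whose right-hand side $2d(x,y)^2+4L\rho(s,t)^2$ is a sum of a distance in the $x$-coordinate and a distance in the $s$-coordinate, and is therefore itself the intrinsic distance of a sum of two independent processes.

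First I would construct the dominating process. Since $\rho$ is a continuous Gaussian distance there is a zero mean Gaussian process $\{H(s):s\in E\}$, sample bounded and uniformly $\rho$-continuous, with $\rho(s,t)^2=\E(H(s)-H(t))^2$; let $W$ be a Brownian motion independent of $H$, so that $\{w(x)W(x):x\in[0,1]\}$ is sample bounded and uniformly $d$-continuous by \cite{AGOZ}, Example~4.8, with $d$ as in \eqref{dw}. Define on $E\times[0,1]$
\[
\Gamma(s,x):=\sqrt2\,w(x)W(x)+2\sqrt L\,H(s).
\]
By independence its $L_2$ distance is $d_\Gamma^2((s,x),(t,y))=2d(x,y)^2+4L\rho(s,t)^2$, which is exactly the bound of Lemma~\ref{lem:upper}, so $d_{G_0}\le d_\Gamma$. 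Moreover $\Gamma$ is sample bounded (a sum of two sample bounded processes) and uniformly $d_\Gamma$-continuous, since $d_\Gamma\to0$ forces both $d\to0$ and $\rho\to0$, and $|\Gamma(s,x)-\Gamma(t,y)|\le\sqrt2\,|w(x)W(x)-w(y)W(y)|+2\sqrt L\,|H(s)-H(t)|$ transfers the uniform continuity of the two coordinate processes.

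Next I would deal with the passage from the \emph{countable} comparison theorem to the uncountable index set $E\times[0,1]$. Because $W$ and $H$ are sample bounded Gaussian processes, $([0,1],d)$ and $(E,\rho)$ are totally bounded, so I may fix countable dense sets $D_0\subseteq([0,1],d)$ and $E_0\subseteq(E,\rho)$; then $T_0:=E_0\times D_0$ is countable and dense in $(E\times[0,1],d_\Gamma)$, and since $d_{G_0}\le d_\Gamma$ it is dense in $(E\times[0,1],d_{G_0})$ as well. Applying Theorem~\ref{thm:comp} on $T_0$ with $H_1=G_0|_{T_0}$, $H_2=\Gamma|_{T_0}$ and $d_{G_0}\le d_\Gamma$ shows $G_0|_{T_0}$ is sample bounded and uniformly $d_{G_0}$-continuous. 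As $\sup_{(s,x)}\E G_0(s,x)^2=\sup_x x\,w(x)^2<\infty$ (finite because $xw(x)^2$ is non-decreasing on $(0,\gamma)$ and $w$ is bounded on $[\gamma,1/2]$, together with symmetry), Lemma~\ref{lem:extension} upgrades this to a version of $G_0$ on all of $E\times[0,1]$ with bounded and uniformly $d_{G_0}$-continuous sample paths.

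Finally I would transfer to $G$. Writing $Z(s,x):=w(x)\1_{X_s\le x}$, one sees $d_{G_0}^2=\E(Z(s,x)-Z(t,y))^2$ whereas $d_G^2=\operatorname{Var}(Z(s,x)-Z(t,y))$, so trivially $d_G\le d_{G_0}\le d_\Gamma$ and $T_0$ remains dense for $d_G$. Repeating the previous paragraph with $G$ in place of $G_0$, using $\sup_{(s,x)}\E G(s,x)^2=\sup_x x(1-x)w(x)^2<\infty$, yields a version of $G$ with bounded and uniformly $d_G$-continuous paths, i.e. pre-Gaussianity of $\{w(y)(\1_{X_t\le y}-y)\}$. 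I expect the only genuinely delicate part to be exactly this bookkeeping: the comparison theorem is stated for countable index sets, so the real work is producing one countable set dense simultaneously for $d_\Gamma$, $d_{G_0}$ and $d_G$ and verifying the uniform second-moment bounds needed for Lemma~\ref{lem:extension}; the distance estimates themselves are immediate from Lemma~\ref{lem:upper} and the elementary inequality $\operatorname{Var}\le$ second moment.
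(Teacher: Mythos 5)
Your proposal is correct and takes essentially the same route as the paper: both build the independent-sum dominating process $\sqrt2\,w(x)W(x)+2\sqrt L\,H(s)$, bound $d_{G_0}$ by its $L_2$ distance via Lemma~\ref{lem:upper}, apply the comparison Theorem~\ref{thm:comp} on a countable dense set, extend with Lemma~\ref{lem:extension}, and pass to $G$ by the inequality variance $\leq$ second moment. If anything your write-up is a bit more careful than the paper's: you compute the dominating distance correctly as $(2d^2+4L\rho^2)^{1/2}$ (the paper states it as $2^{1/2}d+2L^{1/2}\rho$, which is only an upper bound, though this does not affect the argument), and you explicitly verify the uniform second-moment hypothesis of Lemma~\ref{lem:extension}, which the paper leaves implicit.
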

\begin{proof}
  By assumption, $\rho$ is the $L_2$ distance of a zero mean
  Gaussian process on $E$, say $\{H_0(t):t\in E\} $, with
  bounded and uniformly $\rho$-continuous sample paths.  Let
  the metric $d$ on $ [0,1]$ as given in~\ref{dw} with the
  corresponding Gaussian process $w(x)W(x)$, which is sample
  bounded and uniformly $d$-continuous.  Let
  $H_2((t,y)):=2^{1/2}w(y)W(y) + 2L^{1/2}H_0(t): t\in E, \,
  y\in [0,1]$, where $W$ and $H_0$ are independent.  Then
  the $L_2$ distance, $d_{H_2}((s,x),(t,y))$, of $H_2$ is
  $2^{1/2}d(x,y) + 2L^{1/2}\rho(s,t)$.  Total boundedness of
  $d$ and $\rho$ implies that of $d_{H_2}$.  Thus let $T_0$
  be a dense subset in $(E\times [0,1], d_{H_2})$; since
  $d_{G_0}\leq d_{H_2}$ by \eqref{eq:d_G_0}, $T_0$ is also a
  dense subset in $(E\times [0,1], d_{G_0})$; Using the
  comparison Theorem~\ref{thm:comp} with $H_1:=G_0$ and that
  $d_{G_0}\leq d_{H_2}$, the Gaussian process $\{G_0:
  (s,x)\in T_0\}$ is sample bounded and uniformly $d_{G_0}$
  continuous. By Lemma~\ref{lem:extension}, $\{G_0: (s,x)\in
  E\times R\}$ is sample bounded and uniformly
  $d_{G_0}$-continuous; the second statement in the Lemma is
  straightforward.
\end{proof}


For the pre-Gaussian property of the empirical process
considered in \cite{KKZ10}, we give a different proof rather
than the constructive one in \cite{KKZ10} using the generic chaining
\cite{TGC}.
\begin{theorem}
  \label{thm:iid-pregaussian}
  Let $\{Y(t):t\in E\}$ be a process and satisfies the L-condition,
  then the centered Gaussian process on $E\times \R$ with
  covariance either
$$\P(Y_s\leq x, Y_t\leq y) - \P(Y_s\leq x)\P(Y_t\leq y) $$ or
$$\P(Y_s\leq x, Y_t\leq y) $$
has a version, which is sample bounded and uniformly
continuous with respect to its $L_2$ distance.
\end{theorem}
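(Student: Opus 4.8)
The plan is to reduce the general process $Y$ to the uniform process already treated in Corollary~\ref{cor:preg}, by means of the distributional transform. First I would set $X_t:=\tilde F_t(Y_t)$, which is a uniform process by Lemma~\ref{lem:dist-trans}, and check that the L-condition for $Y$ forces the WL-condition for $(X;w)$ with $w\equiv 1$. This is precisely the estimate already carried out in the proof of Corollary~\ref{cor:kkz}: the L-condition yields $\sup_x|\tilde F_t(x)-\tilde F_s(x)|\leq 6(L+1)\rho(s,t)^2$, and hence
\[
\sup_t\Pr\bigl(\sup_{s:\rho(s,t)\leq\varepsilon}\1_{X_s\leq x<X_t}>0\bigr)\leq(7L+7)\varepsilon^2,
\]
together with the symmetric inequality; that is, \eqref{WLts} and \eqref{WLst} hold with $w\equiv 1$ and constant $7L+7$. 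Since $w\equiv 1$ satisfies all the hypotheses on the weight in Theorem~\ref{thm:main1}, Corollary~\ref{cor:preg} then supplies sample-bounded, uniformly continuous versions of both the uncentered Gaussian process $G_0^X$ with covariance $\P(X_s\leq x,X_t\leq y)$ and the centered one $G^X$ with covariance $\P(X_s\leq x,X_t\leq y)-xy$ on $E\times[0,1]$.

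Next I would transfer these versions to $E\times\R$ using the map $\phi(t,y):=(t,F_t(y))$, whose range lies in $E\times[0,1]$. The key input is the pointwise identity $\1_{Y_t\leq y}=\1_{X_t\leq F_t(y)}$ a.s.: for fixed $(t,y)$ the event $\{\tilde F_t(Y_t)=F_t(y)\}$ is null because $X_t$ is uniform, and off it the equality follows from Lemma~\ref{lem:properites-tilde-F} (this is the two-point instance of Proposition~\ref{prop:X2Y}(ii)). Applying it at two points gives, for all $(s,x),(t,y)$,
\[
\P(Y_s\leq x,Y_t\leq y)=\P(X_s\leq F_s(x),X_t\leq F_t(y)),\qquad\P(Y_t\leq y)=F_t(y),
\]
so the covariance of $G_0^Y$ equals that of $G_0^X\circ\phi$, and the centered covariance of $G^Y$ equals that of $G^X\circ\phi$. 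Taking expectations of $(\1_{Y_s\leq x}-\1_{Y_t\leq y})^2$ in the same way shows the induced $L_2$ distances coincide, $d_{G_0^Y}=d_{G_0^X}\circ\phi$ and $d_{G^Y}=d_{G^X}\circ\phi$, on all of $E\times\R$.

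I would then define $\tilde G_0^Y:=G_0^X\circ\phi$ and $\tilde G^Y:=G^X\circ\phi$ from the good versions of Corollary~\ref{cor:preg}. Each is a centered Gaussian process on $E\times\R$ with the covariance demanded in the statement, hence a version of the corresponding process. Sample boundedness is immediate since $\sup_{E\times\R}|G_0^X\circ\phi|\leq\sup_{E\times[0,1]}|G_0^X|<\infty$ almost surely, and uniform continuity transfers verbatim: given $\eta>0$, pick $\delta$ from the uniform $d_{G_0^X}$-continuity of $G_0^X$; then $d_{G_0^Y}((s,x),(t,y))=d_{G_0^X}(\phi(s,x),\phi(t,y))<\delta$ forces $|\tilde G_0^Y(s,x)-\tilde G_0^Y(t,y)|<\eta$, and identically for $\tilde G^Y$. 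This establishes both assertions simultaneously.

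The step I expect to be the main obstacle is this metric–covariance correspondence between the $Y$-indexed process and the $\phi$-image of the uniform process, which rests entirely on the almost-sure two-point identity $\1_{Y_t\leq y}=\1_{X_t\leq F_t(y)}$. Once that is secured, pre-Gaussianity is inherited by plain composition with $\phi$, and at this stage one needs neither the extension Lemma~\ref{lem:extension} nor the comparison Theorem~\ref{thm:comp}, since $\phi(E\times\R)\subseteq E\times[0,1]$ already carries the good versions. Should one prefer to avoid the explicit composition, total boundedness of $d_{G_0^Y}$ as the pullback of a totally bounded metric permits working on a countable dense set and extending by Lemma~\ref{lem:extension}, with the comparison theorem handling $G^Y$ through $d_{G^Y}\leq d_{G_0^Y}$.
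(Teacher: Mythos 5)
Your proposal is correct, but it takes a genuinely different route from the paper. The paper works directly with the $Y$-indexed processes on $E\times\R$: it bounds $d_{G_2}^2((s,x),(t,y))\leq 6(L+1)\rho(s,t)^2+|F_t(y)-F_s(x)|$ straight from the L-condition (via Lemma~1 of \cite{KKZ10}), builds the comparison process $H(t,y)=W(F_t(y))+(6L+6)^{1/2}H_1(t)$ --- note the Brownian motion is composed with $F_t$ \emph{inside} the comparison process --- and then invokes Theorem~\ref{thm:comp} on a countable $d_H$-dense set followed by Lemma~\ref{lem:extension}. You instead route everything through the copula process $X_t=\tilde F_t(Y_t)$: the L-to-WL computation you borrow from the proof of Corollary~\ref{cor:kkz} is sound (it does not use strict monotonicity of $F_t$, which is only needed there for the CLT transfer in Proposition~\ref{prop:X2Y}(i)), Corollary~\ref{cor:preg} with $w\equiv 1$ then gives the good versions on $E\times[0,1]$, and your transfer by composition with $\phi(t,y)=(t,F_t(y))$ is legitimate because the covariance identities only require the per-pair almost-sure equality $\1_{Y_t\leq y}=\1_{X_t\leq F_t(y)}$ (null sets may depend on the pair, which is harmless for expectations, unlike for path-wise CLT statements). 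What your route buys is economy and a clean exhibition of why no monotonicity hypothesis is needed: pre-Gaussianity becomes a pullback of the uniform-process result, with no second pass through the comparison and extension machinery. What the paper's route buys is self-containedness and an explicit comparison process on $E\times\R$, which also underlies the remark following Corollary~\ref{cor:kkz} that a countable $d_H$-dense set exists; ultimately the two arguments rest on the same tools, since Corollary~\ref{cor:preg} is itself proved by comparison with $2^{1/2}W(y)+2L^{1/2}H_0(t)$ plus Theorem~\ref{thm:comp} and Lemma~\ref{lem:extension} --- your version simply composes $W$ with $F_t$ at the end rather than at the start.
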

\begin{proof}
  Let $\{G_1(t,y):t\in E, \, y\in \R\}$ and $\{G_2(t,y):t\in
  E, \, y\in \R\}$ be the Gaussian processes on $E\times \R$
  with covariance $\P(Y_s\leq x, Y_t\leq y) - \P(Y_s\leq
  x)\P(Y_t\leq y) $ and $\P(Y_s\leq x, Y_t\leq y)$,
  respectively.  Let $d_{G_1}$ and $d_{G_2}$ be their $L_2$
  distances, respectively; i.e,
 \begin{align}
   d_{G_1}((s,x),(t,y))^2 &=\E (\1_{Y_s\leq x}-\1_{Y_t\leq
     y})^2
   - (\E (\1_{Y_s\leq x}-\1_{Y_t\leq y}))^2, \label{eq:d_1-d_2}\\
   d_{G_2}((s,x),(t,y))^2 &=\E (\1_{Y_s\leq x}-\1_{Y_t\leq
     y})^2.  \nonumber
 \end{align}
 And,
  \begin{align}
    d_{G_2}((s,x),(t,y))^2
    &=\E (\1_{Y_s\leq x}-\1_{Y_t\leq y})^2 \label{eq:d_1<d_H}\\
    &=\E (\1_{Y_s\leq x}-\1_{Y_t\leq x}+\1_{Y_t\leq x}-\1_{Y_t\leq y})^2 \nonumber\\
    &\leq 2\E (\1_{Y_s\leq x}-\1_{Y_t\leq x})^2+\E(\1_{Y_t\leq x}-\1_{Y_t\leq y})^2 \nonumber\\
    &\leq 2 (\P(Y_s\leq x<Y_t)+\P(Y_t\leq x<Y_s))+|F_t(y)-F_t(x)| \nonumber\\
    &\leq 6(L+1)\rho(s,t)^2+|F_t(y)-F_s(x)|, \nonumber
 \end{align}
 where in the last line of the above display, we used
 Lemma~1 in \cite{KKZ10}.
 
 Let $W(\cdot)$ be a Brownian motion on $[0,\infty)$.
 Define the centered Gaussian process $$H_2(t,y):=W(F_t(y)):
 t\in E, \, y\in \R,$$ where $F_t(\cdot)$ be the df of
 $Y_t$.  Then its $L_2$ distance
 $d_{H_2}((s,x),(t,y))=|F_t(y)-F_s(x)|^{1/2}$.
 By the uniform continuity of the sample
 paths of $W(\cdot)$ on $[0,1]$,
 it follows
 that $H_2$ is sample bounded and uniformly continuous with
 respect to $d_{H_2}$.  By the L-condition, let
 $\{H_1(t):t\in E\}$, independent from $H_2$, be a Gaussian
 process with bounded and uniformly continuous sample paths
 with it's $L_2$ distance $\rho$. Define $H(t,y)=H_2(t,y) +
 (6L+6)^{1/2}H_1(t)$.  Then $\{H(t,y): t\in E, \, y\in \R\}$
 is sample bounded and uniformly continuous with respect to
 it's $L_2$ distance $d_H$.
 Total boundedness of $d_{H_1}$ and $d_{H_2}$ implies that
 of $d_H$ as can be seen from the equation
$$d_H((t_1, y_1), (t_2, y_2))^2=d_{H_2}((t_1, y_1), (t_2, y_2))^2 + (6L+6)d_{H_1}(t_1,t_2)^2.$$ 
Thus let $T_0 $ be a countable dense subset in $(E\times \R,
d_H) $.  Since $d_{G_1}\leq d_H $ in view of
\eqref{eq:d_1-d_2} and \eqref{eq:d_1<d_H}, by the comparison
theorem~\ref{thm:comp}, $\{G_1(s,x): (s,x)\in T_0\}$ is
sample bounded and uniformly continuous with respect to
$d_{G_1}$.  Since $T_0$ is also dense in $(E\times \R,
d_{G_1}) $, by Lemma~\ref{lem:extension}, $\{G_1(s,x):
(s,x)\in E\times \R\}$ has a version which is sample
bounded and uniformly $d_{G_1}$-continuous.
\end{proof}

\subsection{Local modulus} Recall that a positive function
$L(x) $ defined on $(0,\infty) $ is slowly varying at
infinity (in a neighborhood of zero) if $ {L(\lambda x)}/{L(x)}\to 1, x\to
\infty \;(x \to 0) $ for every $\lambda>0$ (see
\cite[p. 276]{FellerII}). One says a function $U(x)$ is
regularly varying at
infinity (in a neighborhood of zero) if $U(x)=x^\rho L(x) $ for some
$-\infty<\rho<\infty $, and some slowly varying at
infinity (in a neighborhood of zero) function
$L(x)$; $\rho$ is called the exponent (see
\cite[p. 275]{FellerII}).
\begin{lemma}\label{lem6} Let $w(x)>0$ for $0<x\leq 1/2$ and is
  regularly varying in a neighborhood of $0$ with nonzero
  exponent $\alpha$. Let $\theta_0>0$ be small enough such that
  $w(x)$ is non-increasing for $0<x<\theta_0$.  Then for
  $0<\theta<\theta_0$
\[ \sum_{k=0}^\infty \frac{1}{w(2^{-k}\theta)^2} \leq
\frac{C}{w(\theta)^2},
\] where $C$ depends only on the weight function $w(x)$, but
not on the argument $x$.
\end{lemma}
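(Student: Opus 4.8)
The plan is to use that $w$ is regularly varying at $0$ to show the summands $w(2^{-k}\theta)^{-2}$ decay geometrically in $k$, at a rate and with a multiplicative constant that are both independent of $\theta$; equivalently I will bound $\sum_{k\ge 0} w(\theta)^2/w(2^{-k}\theta)^2$ uniformly in $\theta\in(0,\theta_0)$. First I would pin down the sign of the exponent. Since $w$ is non-increasing on $(0,\theta_0)$ and strictly positive, $w(x)\ge w(x')>0$ whenever $x\le x'<\theta_0$, so $\lim_{x\to 0^+}w(x)$ exists in $(0,\infty]$ and in particular $w(x)\not\to 0$. But a regularly varying function $w(x)=x^\alpha L(x)$ with positive exponent satisfies $w(x)\to 0$ as $x\to 0^+$. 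Hence the nonzero exponent must be negative, and I set $|\alpha|=-\alpha>0$.

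The key analytic input is the uniform convergence theorem (Potter's bounds) for regularly varying functions (see \cite{FellerII}), which upgrades the pointwise ratio limit $w(\lambda x)/w(x)\to\lambda^\alpha$ to a uniform estimate. Fixing any $\delta\in(0,|\alpha|)$, these bounds furnish a threshold $x_0\in(0,\theta_0]$ and a constant $A\ge 1$ such that
$$
\frac{w(x)}{w(y)}\le A\Big(\frac{y}{x}\Big)^{|\alpha|-\delta}\qquad\text{for all }0<y\le x\le x_0 .
$$
Since $|\alpha|-\delta>0$, applying this along the dyadic scale produces exactly the geometric factor $2^{-(k-k_0)(|\alpha|-\delta)}$ that makes the tail summable.

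To turn this into a bound normalized at $\theta$, I would split the series at the first index $k_0=k_0(\theta)$ with $2^{-k_0}\theta\le x_0$. For $k\ge k_0$ both $2^{-k_0}\theta$ and $2^{-k}\theta$ lie in $(0,x_0]$, so combining the Potter bound (with $x=2^{-k_0}\theta$, $y=2^{-k}\theta$) with the monotonicity inequality $w(\theta)\le w(2^{-k_0}\theta)$ gives
$$
\frac{w(\theta)^2}{w(2^{-k}\theta)^2}\le\frac{w(2^{-k_0}\theta)^2}{w(2^{-k}\theta)^2}\le A^2\,2^{-2(k-k_0)(|\alpha|-\delta)},
$$
whence $\sum_{k\ge k_0} w(\theta)^2/w(2^{-k}\theta)^2\le A^2/(1-2^{-2(|\alpha|-\delta)})$. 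For the remaining indices $k<k_0$ I use only monotonicity: $w(2^{-k}\theta)\ge w(\theta)$, so each term is at most $1$, and because $\theta<\theta_0$ one has $k_0\le\lceil\log_2(\theta_0/x_0)\rceil=:N_0$, a count independent of $\theta$. Adding the two pieces yields $\sum_{k=0}^\infty w(\theta)^2/w(2^{-k}\theta)^2\le N_0+A^2/(1-2^{-2(|\alpha|-\delta)})=:C$, which depends only on $w$ (through $\alpha$, $x_0$, $A$, $N_0$) and not on $\theta$, proving the stated inequality after dividing by $w(\theta)^2$.

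The step I expect to be the main obstacle is precisely the uniformity in $\theta$: the bare definition of regular variation controls $w(2^{-k}\theta)/w(\theta)$ for each fixed $\theta$ but with a constant that could degenerate as $\theta$ varies, so the pointwise limit is not enough. This is why I invoke the uniform (Potter) form of regular variation for the tail $k\ge k_0$, and why the finite initial block $k<k_0$ must be absorbed using the non-increasing property together with a $\theta$-independent bound $N_0$ on its length; keeping both bounds free of $\theta$ is the crux of the argument.
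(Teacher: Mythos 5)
Your proof is correct, but it takes a genuinely different route from the paper. The paper compares the dyadic sum with the integral $\int_0^\theta \frac{1}{w(y)^2}\frac{dy}{y}$ using the monotonicity of $w$ (each dyadic block of the integral is within a factor $\ln 2$ of the corresponding summand), and then invokes Karamata's theorem (Feller, Vol.\ II, Theorem 1, p.\ 281) to conclude that this integral is asymptotic to $\tfrac{1}{\alpha}\cdot\tfrac{1}{w(\theta)^2}$, hence bounded by $C(w)/w(\theta)^2$ uniformly on $(0,\theta_0)$; your argument never passes to an integral, but instead uses Potter's bounds to extract geometric decay $w(\theta)^2/w(2^{-k}\theta)^2 \le A^2 2^{-2(k-k_0)(|\alpha|-\delta)}$ along the dyadic scale below the Potter threshold $x_0$, sums the geometric series, and absorbs the finitely many initial terms ($k<k_0\le N_0$, with $N_0$ independent of $\theta$) using monotonicity alone. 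Each approach has merit: the paper's is shorter, delegating all the regular-variation work to Karamata's theorem in one stroke; yours is more hands-on and makes two points explicit that the paper glosses over, namely the uniformity of the constant in $\theta$ (which in the paper is hidden in passing from a limit statement to a bound on all of $(0,\theta_0)$) and, more notably, the fact that the monotonicity hypothesis forces the exponent $\alpha$ to be negative, so that $1/w^2$ has positive exponent --- a fact the paper simply asserts when it writes ``$\alpha>0$ is the exponent of the regularly varying function $1/w(x)^2$,'' even though it is exactly what makes the relevant integral converge and Karamata's theorem applicable.
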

\begin{proof}

Since $w(x)$ is non-increasing for $0<x<\theta_0$,
\[ (\ln 2)\sum_{k=1}^\infty \frac{1}{w(2^{-k}\theta)^2} \leq
\int_0^\theta \frac{1}{w(y)^2}\frac{dy}{y} \leq (\ln
2)\sum_{k=0}^\infty \frac{1}{w(2^{-k}\theta)^2}.
\] By Theorem 1 in \cite[p. 281]{FellerII}, we have
\[ \frac{\tfrac{1}{w(\theta)^2}} {\int_0^\theta
\frac{1}{w(y)^2}\frac{dy}{y}} \to \alpha, \quad \hbox{ as
}\theta\rightarrow 0,
\] where $\alpha > 0 $ is the exponent of the regularly
varying function $1/w(x)^2$ (note that if $w(x)$ is
regularly varying, so is $1/w(x)^2$). Therefore, there is a
constant $C(w)$ such that
\[ \Big|\frac {\int_0^\theta
\frac{1}{w(y)^2}\frac{dy}{y}}{\tfrac{1}{w(\theta)^2}} \Big|
\leq C(w), \quad 0<\theta<\theta_0. \qedhere
\]
\end{proof}

\begin{lemma}\label{lem:ab} Given $\varepsilon>0$, under the
  assumptions of Theorem~\ref{thm:main1}, we have for
  $0<a<b<1$ and $t$ fixed
\[ \Pr(\exists s, \rho(s,t)\leq \varepsilon, \exists x\in
(a,b] : X_s\leq x <X_t)\leq
\frac{C\varepsilon^2}{w(b)^2}+(b-a),
\] and
\[ \Pr(\exists s, \rho(s,t)\leq \varepsilon, \exists x\in
(a,b] : X_t\leq x <X_s)\leq
\frac{C\varepsilon^2}{w(b)^2}+(b-a),
\] where $C$ is a constant depending only on the function
$w(x)$.
\end{lemma}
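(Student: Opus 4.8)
The plan is to prove the first inequality; the second is symmetric, obtained by interchanging the roles of $\1_{X_s\le x<X_t}$ and $\1_{X_t\le x<X_s}$ and invoking \eqref{WLts} in place of \eqref{WLst}. Fix $t\in E$ and let
\[
\mathcal E=\{\exists\, s,\ \rho(s,t)\le\varepsilon,\ \exists\, x\in(a,b]:\ X_s\le x<X_t\}
\]
be the event whose outer probability we must bound. The guiding idea is that the only genuinely dangerous configurations are those in which $X_t$ sits above the right endpoint $b$, and these are exactly the ones controlled by the WL-condition evaluated at the single point $x=b$; in every other configuration the requirement $x<X_t$ with $x>a$ pins $X_t$ inside the short interval $(a,b]$, which has probability $b-a$ since $X_t$ is uniform.

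Concretely, I would split $\mathcal E$ according to whether $X_t>b$ or $X_t\le b$. On $\mathcal E\cap\{X_t>b\}$ every $x\in(a,b]$ automatically satisfies $x\le b<X_t$, so the clause ``$\exists x\in(a,b]:X_s\le x<X_t$'' collapses to ``$\exists x\in(a,b]:X_s\le x$'', i.e.\ to $X_s\le b$ (take $x=b$). Since $\{X_t>b\}$ does not involve $s$,
\[
\mathcal E\cap\{X_t>b\}=\{\exists\, s,\ \rho(s,t)\le\varepsilon:\ X_s\le b<X_t\},
\]
which is precisely the event in \eqref{WLst} at $x=b$, so its outer probability is at most $L\varepsilon^2/w(b)^2$. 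On $\mathcal E\cap\{X_t\le b\}$, the existence of some $x>a$ with $x<X_t$ forces $X_t>a$, whence $\mathcal E\cap\{X_t\le b\}\subseteq\{a<X_t\le b\}$; as $X_t\sim U(0,1)$ this measurable event has probability $b-a$. Adding the two estimates by subadditivity of outer probability yields the stated bound with $C=L$.

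The steps that need care are the set-theoretic reductions rather than any analytic estimate: one must verify that the existential quantifier over $x\in(a,b]$ collapses exactly to the endpoint conditions above, paying attention to the half-open interval $(a,b]$ and to the strict versus nonstrict inequalities. The one real subtlety is to keep the whole argument at the level of outer probability $\Pr$, so that the (possibly uncountable) supremum over $s$ with $\rho(s,t)\le\varepsilon$ is legitimately absorbed into \eqref{WLst}; the bound for the piece $\{a<X_t\le b\}$ needs nothing beyond uniformity of $X_t$. I note that this lemma uses neither the regular-variation nor the integrability hypotheses of Theorem~\ref{thm:main1}, only the WL-condition and the uniform marginal law of the input process.
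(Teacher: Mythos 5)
Your treatment of the first inequality is correct and in fact cleaner than the paper's: the two-case split on $\{X_t>b\}$ versus $\{X_t\le b\}$, a single application of \eqref{WLst} at $x=b$, and the uniformity of $X_t$ give the bound with $C=L$. The paper instead decomposes $(a,b]$ into dyadic pieces $(2^{-k-1}b,2^{-k}b]$, applies the WL-condition at each dyadic endpoint, and then needs Lemma~\ref{lem6} (hence the regular variation and monotonicity of $w$) to sum $\sum_k w(2^{-k}b)^{-2}\le Cw(b)^{-2}$; your argument shows that for this first inequality none of that machinery is required.

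The gap is in the second inequality, which you dismiss as ``symmetric.'' It is not: interchanging the roles of the indicators also interchanges the roles of the two endpoints. For the event $\{\exists s,\ \rho(s,t)\le\varepsilon,\ \exists x\in(a,b]: X_t\le x<X_s\}$, the intersection with $\{X_t>b\}$ is empty and the intersection with $\{a<X_t\le b\}$ has probability at most $b-a$; the dangerous piece is now $\{X_t\le a\}$, on which the event reduces to $\{\exists s,\ \rho(s,t)\le\varepsilon: X_t\le a<X_s\}$. The only value of $x$ at which \eqref{WLts} applies to this event is $x=a$ (for $x<a$ the condition $X_t\le x$ is not implied, for $x>a$ the condition $x<X_s$ is not implied), so your method yields $L\varepsilon^2/w(a)^2+(b-a)$ --- with $w(a)$, not $w(b)$, in the denominator. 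To reach the stated bound you still need a comparison $w(b)\le C'w(a)$, and supplying that comparison is precisely what the paper's dyadic decomposition plus Lemma~\ref{lem6} accomplishes: there the WL-condition is only ever invoked at the points $2^{-k-1}b$, and regular variation controls the resulting sum by $Cw(b)^{-2}$. Under the hypotheses of Theorem~\ref{thm:main1} your route can be repaired when $0<a<b\le 1/2$, since $w$ is non-increasing on $(0,\gamma)$ and continuous, positive, and uniformly bounded on $[\gamma,1/2]$, so $w(b)/w(a)$ is bounded by a constant depending only on $w$ (consistent with the reduction to $x<1/2$ in Remark~\ref{rem:main1}); note that for $b$ near $1$ the ratio $w(b)/w(a)$ is unbounded (as $w(b)=w(1-b)\to\infty$), so the missing step is not cosmetic. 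As written, with no comparison between $w(a)$ and $w(b)$, the second half of your proof does not deliver the stated estimate.
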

\begin{proof} Let $N\geq 0$ be the biggest integer such that
  $ b/2^N \geq a$. Then,
\begin{align*}
& \Pr(\exists s, \rho(s,t)\leq
\varepsilon,\exists x\in (a,b]: X_s\leq x <X_t)\\
&\leq \sum_{k=0}^{N-1} \Pr(\exists s, \rho(s,t)\leq \varepsilon,
\exists x\in (2^{-k-1}b, 2^{-k}b]: X_s\leq x <X_t) \\
&\qquad + \Pr(\exists s, \rho(s,t)\leq \varepsilon: X_s\leq x <X_t)\\
&\leq \sum_{k=0}^{N-1}
\Pr(\exists s, \rho(s,t)\leq \varepsilon: X_s\leq 2^{-k}b <X_t)+
\sum_{k=0}^{N-1} \P(2^{-k-1}b<X_t\leq 2^{-k}b)\\
&\qquad + \Pr(\exists s, \rho(s,t)\leq \varepsilon: X_s\leq 2^{-N}b <X_t) + \P(a<X_t\leq 2^{-N}b)\\
&\leq \sum_{k=0}^{N-1} \Pr(\exists s, \rho(s,t)\leq
\varepsilon: X_s\leq
2^{-k}b <X_t)+ \sum_{k=0}^{N-1}(2^{-k}b - 2^{-k-1}b)\\
&\qquad + \Pr(\exists s, \rho(s,t)\leq \varepsilon: X_s\leq 2^{-N}b <X_t) + 2^{-N}b-a \\
&\leq \sum_{k=0}^{N} \Pr(\exists s, \rho(s,t)\leq \varepsilon
: X_s\leq 2^{-k}b <X_t) + \sum_{k=0}^{N-1}(2^{-k}b -
2^{-k-1}b) + 2^{-N}b-a \\ &\leq \sum_{k=0}^\infty
\frac{L\varepsilon^2}{w( 2^{-k}b)^2} + (b-a) \quad\text{
using {WL-condition} to bound the probabilities}\\ &\leq
\frac{C\varepsilon^2}{w(b)^2} +(b-a) \quad\text{ by
Lemma~\ref{lem6}. }
\end{align*} 
The proof for the second part is similar; just
change from $X_t\leq x <X_s $ for $ 2^{-k-1}b <x\leq 2^{-k}b
$ to $X_t\leq 2^{-k-1}b <X_s $, with the same exceptional
probability $(2^{-k}b-2^{-k-1}b)$.
\end{proof} 

For the following, we use $C$ to denote a constant which may
change from line to line and depends only on the weight
function $w(x)$.

Let the distance $d$ be as in \eqref{dw}. Then,
$$e((s,x),(t,y)):= \max\{d(x,y), \rho(s,t)\}$$
is bounded by the Gaussian distance $ (d(x,y)^2+
\rho(s,t)^2)^{1/2}$ on $E\times (0,1)$ and will be used
as the \lq $\rho$\rq \,in (iii) of Theorem \ref{thm:agoz}.
\begin{lemma}\label{lem:x0} For $t\in E$, $y\in (0,1)$,
let $x_0:=\inf\{x: \text{ for some } s,
e((s,x),(t,y))<\varepsilon\}$, then
\begin{equation}\label{eq:x0} d(x_0,y)\leq \varepsilon.
\end{equation}
\end{lemma}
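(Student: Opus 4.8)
The plan is to recognize that the quantifier ``for some $s$'' in the definition of $x_0$ imposes no real constraint, so that $x_0$ is simply the left endpoint of a sublevel set of the map $x\mapsto d(x,y)$, and then to read off \eqref{eq:x0} from the continuity of that map. First I would record that, since $e((s,x),(t,y))=\max\{d(x,y),\rho(s,t)\}$, the inequality $e((s,x),(t,y))<\varepsilon$ forces $d(x,y)<\varepsilon$; conversely, if $d(x,y)<\varepsilon$ then choosing $s=t$ makes $\rho(s,t)=0<\varepsilon$, so that $e((t,x),(t,y))=d(x,y)<\varepsilon$. Therefore the set defining $x_0$ coincides with $\{x:d(x,y)<\varepsilon\}$, and
\[
  x_0=\inf\{x:d(x,y)<\varepsilon\}.
\]
This set is nonempty because $d(y,y)=0<\varepsilon$, and it is bounded below, so $x_0$ is well defined.

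Next I would invoke the definition of the infimum to select points $x_n$ of this set with $x_n\to x_0$, so that $d(x_n,y)<\varepsilon$ for each $n$. The remaining ingredient is that $x\mapsto d(x,y)$ is continuous. This is immediate from the explicit expression \eqref{dw}: $d(x,y)^2$ is assembled from $w(x\vee y)$, $x\wedge y$, $|y-x|$ and $w(x)$, each continuous in $x$ once $w$ is assumed continuous. Passing to the limit along $x_n$ then yields $d(x_0,y)=\lim_n d(x_n,y)\le\varepsilon$, which is exactly \eqref{eq:x0}.

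I do not anticipate a genuine obstacle: the whole point is that taking $s=t$ collapses the two-parameter pseudometric $e$ to $d(\cdot,y)$, after which the claim is merely the standard fact that a continuous function is $\le\varepsilon$ at the infimum of its $\{<\varepsilon\}$ sublevel set. The one spot where the full hypotheses of Theorem~\ref{thm:main1} matter rather than bare continuity of $w$ is the behavior as $x\downarrow 0$, relevant only if $x_0=0$; there the assumption that $xw(x)^2$ is non-decreasing near the origin guarantees a finite right limit of $d(\cdot,y)$ at $0$, so the limiting step above remains valid up to the boundary.
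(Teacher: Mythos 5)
Your proof is correct and takes essentially the same route as the paper's: both select a sequence $x_n\to x_0$ from the defining set (so $d(x_n,y)<\varepsilon$) and pass to the limit using continuity of $x\mapsto d(x,y)$ to get $d(x_0,y)\le\varepsilon$. The only cosmetic differences are that you first collapse the vacuous ``for some $s$'' quantifier by taking $s=t$, and you justify the continuity directly from the explicit formula \eqref{dw} and the continuity of $w$, whereas the paper invokes the continuity of the weighted Wiener process $w(x)W(x)$.
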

\begin{proof} Indeed there exist a sequence $(s_n,
  x_n)_{n\in \N}$ in the set over which the infimum is taken
  such that $|x_n -x_0|\rightarrow 0$ as $n\rightarrow
  \infty$ and that $d(x_n,y)\leq \varepsilon$.  By the
  sample continuity of the weighted Wiener process
  $w(x)W(x)$, we have $d(x_n, y)\rightarrow d(x_0,
  y)$ as $n\rightarrow \infty$. Hence we have obtained
  $d(x_0,y)\leq \varepsilon$.
\end{proof}
\begin{remark*} The finiteness of $d(x_0,y)$ implies that
  $x_0$ can't be zero in view of \eqref{dw} since $w(x)\to
  \infty$ and $xw(x)^2\to 0$ as $x\to 0$.
\end{remark*}
\begin{lemma}\label{lem:x1} For $t\in E$, $y\in (0,1)$,
  let $x_1:=\sup\{x: \text{ for some } s,
  e((s,x),(t,y))<\varepsilon\}$, then
\begin{equation}\label{eq:x1} d(y,x_1)\leq \varepsilon.
\end{equation}
\end{lemma}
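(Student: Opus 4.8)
The plan is to mirror the argument used for Lemma~\ref{lem:x0}, replacing the infimum by the supremum. Write $A := \{x : \text{for some } s\in E,\ e((s,x),(t,y)) < \varepsilon\}$, so that $x_1 = \sup A$. The one structural observation I would make at the outset is that since $e((s,x),(t,y)) = \max\{d(x,y), \rho(s,t)\} \ge d(x,y)$, membership $x \in A$ forces $d(x,y) < \varepsilon$; in other words $A \subseteq \{x : d(x,y) < \varepsilon\}$. This reduces the whole lemma to a one-dimensional statement about the continuity of the map $x \mapsto d(x,y)$, and the role of $s$ and $\rho$ drops out entirely.

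Next I would produce, by the definition of supremum, a sequence $(s_n, x_n)_{n\in\N}$ in the set over which the supremum is taken --- i.e. $e((s_n,x_n),(t,y)) < \varepsilon$ for each $n$ --- with $|x_n - x_1| \to 0$. For each $n$ the preceding observation gives $d(x_n, y) < \varepsilon$. The key input is then the continuity of $x \mapsto d(x,y)$ at $x_1$: this follows from the sample continuity (equivalently, uniform $L_2$-continuity) of the weighted Wiener process $w(x)W(x)$ recorded in the Example~4.8 lemma of \cite{AGOZ}, exactly as in the proof of Lemma~\ref{lem:x0}, and can alternatively be read off directly from the explicit formula \eqref{dw} together with the continuity of $w$. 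Passing to the limit yields $d(x_1, y) = \lim_{n\to\infty} d(x_n, y) \le \varepsilon$, and since $d$ is symmetric this is $d(y, x_1) \le \varepsilon$, as claimed.

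I do not expect a genuine obstacle here, since the lemma is the exact reflection of Lemma~\ref{lem:x0}. The only point requiring a moment's care is that the limit point $x_1$ lie in the region where $w$, and hence $d(\cdot,y)$, is continuous, so that the passage to the limit is legitimate; as $w$ is continuous on $(0,1)$ and we work with $y$ in the range $(0,1/2)$ fixed in Remark~\ref{rem:main1}, this holds automatically. Note that, unlike in Lemma~\ref{lem:x0} where the accompanying Remark had to exclude the degenerate boundary value $x_0 = 0$ (using $w(x)\to\infty$ and $xw(x)^2\to 0$ as $x\to 0$), no analogous exclusion is needed for the supremum, because $x_1 \ge y > 0$ already and the finiteness $d(x_1,y)\le\varepsilon$ is preserved in the limit.
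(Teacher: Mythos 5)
Your proposal is correct and is essentially the paper's own argument: the paper disposes of Lemma~\ref{lem:x1} with ``by a similar argument as in the proof of the previous lemma,'' i.e.\ exactly your scheme of extracting a sequence $(s_n,x_n)$ with $e((s_n,x_n),(t,y))<\varepsilon$, $x_n\to x_1$, noting $d(x_n,y)<\varepsilon$, and passing to the limit via the continuity of $x\mapsto d(x,y)$ coming from the sample ($L_2$-) continuity of $w(x)W(x)$. Your added remarks (that $e\geq d$ makes the parameter $s$ irrelevant, and that $x_1\geq y>0$ removes any boundary concern at $0$) are correct refinements, not a different route.
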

\begin{proof} By a similar argument as in the proof of the
  previous lemma.
\end{proof}

The following Lemma \ref{lem:sty}, Lemma \ref{lem:gty} and Lemma \ref{lem:iii}
constitute a weighted version of
Lemma 4 in \cite{KKZ10}.
For brevity of notation, for fixed $(t,y)\in E\times [0,1]$,
we write $(s,x):e<\varepsilon, x\leq y $ for the set
$\{(s,x):e((s,x),(t,y))<\varepsilon, x\leq y\} $.
\begin{lemma} \label{lem:sty} Under the assumptions of
  Theorem ~\ref{thm:main1}, we have for all $\varepsilon>0 $
  and $(t,y)\in E\times [0,1]$,
\[ w(y)^2\Pr(\sup_{(s,x):e<\varepsilon, x\leq y} |\1_{X_s\leq
x}-\1_{X_t\leq x}|>0)\leq C\varepsilon^2.
\]
\end{lemma}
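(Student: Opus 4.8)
The plan is to split the crossing event into its two directions, bound each with Lemma~\ref{lem:ab}, and absorb the Lebesgue remainder it produces into the weight $w(y)^2$ by reading off the explicit form \eqref{dw} of $d$.

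First I would note that, for a fixed admissible pair, $|\1_{X_s\leq x}-\1_{X_t\leq x}|>0$ forces either $X_s\leq x<X_t$ or $X_t\leq x<X_s$. Hence the event inside the probability is contained in the union of a \emph{down-crossing} event (some admissible $(s,x)$ with $X_s\leq x<X_t$) and an \emph{up-crossing} event (some admissible $(s,x)$ with $X_t\leq x<X_s$), where ``admissible'' abbreviates $\rho(s,t)<\varepsilon$, $d(x,y)<\varepsilon$ and $x\leq y$. Since the choice $s=t$ is always admissible in the $\rho$-coordinate, the admissible $x$ with $x\leq y$ are exactly $\{x\leq y:d(x,y)<\varepsilon\}$; by continuity of $x\mapsto d(x,y)$ (sample continuity of $w(x)W(x)$) this is an interval $(x_0,y]$ whose left endpoint is the $x_0$ of Lemma~\ref{lem:x0}. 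The remark after that lemma gives $x_0>0$, and continuity together with $d(y,y)=0$ forces $x_0<y$, so the interval is nondegenerate.

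Next I would apply Lemma~\ref{lem:ab} with $a=x_0$ and $b=y$ to each of the two crossing events (the strict inequalities above only shrink the events relative to the ones the lemma controls). This bounds each probability by $C\varepsilon^2/w(y)^2+(y-x_0)$, hence the full probability by $2C\varepsilon^2/w(y)^2+2(y-x_0)$. Multiplying by $w(y)^2$, the only term that is not already $O(\varepsilon^2)$ is $w(y)^2(y-x_0)$, and this is exactly where \eqref{dw} does the work: since $x_0\leq y$,
\[
d(x_0,y)^2=w(y)^2(y-x_0)+x_0\bigl(w(x_0)-w(y)\bigr)^2\ \geq\ w(y)^2(y-x_0),
\]
so $w(y)^2(y-x_0)\leq d(x_0,y)^2\leq\varepsilon^2$ by Lemma~\ref{lem:x0}. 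Combining gives $w(y)^2\,\Pr(\cdots)\leq (2C+2)\varepsilon^2$, which is the assertion after relabeling the constant.

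The main obstacle is not any single estimate but getting the geometry to line up: recognizing that the admissible $x$-region collapses to a single interval $(x_0,y]$ with left endpoint the $x_0$ of Lemma~\ref{lem:x0}, and then seeing that the linear remainder $(y-x_0)$ produced by Lemma~\ref{lem:ab} is precisely one of the two nonnegative summands of $d(x_0,y)^2$, so that the weight $w(y)^2$ absorbs it at no cost. Once that identification is made, only a union bound and a substitution remain.
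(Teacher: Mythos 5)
Your proposal is correct and follows essentially the same route as the paper's own proof: split into the two crossing events, restrict the admissible $x$ to the interval $(x_0,y]$ with $x_0$ from Lemma~\ref{lem:x0}, apply Lemma~\ref{lem:ab} with $a=x_0$, $b=y$, and absorb the remainder via $w(y)^2(y-x_0)\leq d(x_0,y)^2\leq\varepsilon^2$ using \eqref{dw} and \eqref{eq:x0}. Your write-up is in fact slightly more careful than the paper's (e.g.\ justifying why the admissible $x$-set is the interval $(x_0,y]$ and keeping the union-bound factor of $2$ explicit), but there is no substantive difference.
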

\begin{proof} Let $x_0$ be as in Lemma \ref{lem:x0}. Then,
\begin{align*} &w(y)^2\Pr(\sup_{(s,x):e<\varepsilon, x\leq y}
|\1_{X_s\leq x}-\1_{X_t\leq x}|>0)\\ &=w(y)^2\Big(\Pr(\exists
(s,x),e((s,x),(t,y))<\varepsilon, x\leq y: X_s\leq x <X_t)
\\ & \qquad \qquad + \Pr(\exists
(s,x),e((s,x),(t,y))<\varepsilon, x\leq y: X_t\leq x
<X_s)\Big)\\ & =w(y)^2\Big(\Pr(\exists
(s,x),e((s,x),(t,y))<\varepsilon, x\in (x_0,y]: X_s\leq x
<X_t ) \\ & \qquad \qquad + \Pr(\exists
(s,x),e((s,x),(t,y))<\varepsilon, x\in (x_0,y]: X_t\leq x
<X_s)\Big)\\ &\leq w(y)^2(C\varepsilon^2/w(y)^2+(y-x_0))
\mbox{ by Lemma ~\ref{lem:ab} } \\ &\leq C\varepsilon^2.
\end{align*} For the last inequality, we used
\[ w(y)^2(y-x_0)\leq d(x_0,y)^2\leq \varepsilon^2 \quad
\text{ by } \eqref{eq:x0}. \qedhere
\]
\end{proof}
\begin{lemma} \label{lem:gty} Under the assumptions of
  Theorem ~\ref{thm:main1}, we have for all $\varepsilon>0 $
  and $(t,y)\in E\times [0,1]$,
\[ w(x_1)^2\Pr(\sup_{(s,x):e<\varepsilon, x> y} |\1_{X_s\leq
x}-\1_{X_t\leq x}|>0)\leq C\varepsilon^2.
\]
\end{lemma}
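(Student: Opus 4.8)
The plan is to mirror the proof of Lemma~\ref{lem:sty}, replacing the infimum $x_0$ by the supremum $x_1$ and bounding the relevant crossing probability on the interval lying to the \emph{right} of $y$. Since Lemma~\ref{lem:sty} has already handled the portion $x\leq y$ with the weight $w(y)^2$, the present statement is its symmetric counterpart for $x>y$, now carrying the weight $w(x_1)^2$.

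First I would let $x_1$ be as in Lemma~\ref{lem:x1}, so that $d(y,x_1)\leq\varepsilon$ by \eqref{eq:x1}. The event $\{\sup_{(s,x):e<\varepsilon,\,x>y}|\1_{X_s\leq x}-\1_{X_t\leq x}|>0\}$ splits, exactly as in Lemma~\ref{lem:sty}, into the two crossing events: there exist $s$ and $x>y$ with $e((s,x),(t,y))<\varepsilon$ and $X_s\leq x<X_t$, or the same with $X_t\leq x<X_s$. The key observation is that whenever $x>y$ and $e((s,x),(t,y))<\varepsilon$ for some $s$, the definition of $x_1$ as a supremum forces $x\in(y,x_1]$; thus both crossing events are confined to the interval $(y,x_1]$.

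Next I would apply Lemma~\ref{lem:ab} with $a=y$ and $b=x_1$ to bound each of the two probabilities by $C\varepsilon^2/w(x_1)^2+(x_1-y)$. Multiplying through by $w(x_1)^2$ produces a contribution of $C\varepsilon^2$ from the first term, while the remaining term is controlled by reading off from \eqref{dw} that, since $y<x_1$,
\[
w(x_1)^2(x_1-y)\leq w(x_1)^2(x_1-y)+y(w(y)-w(x_1))^2=d(y,x_1)^2\leq\varepsilon^2,
\]
the last inequality being \eqref{eq:x1}. Summing the two crossing contributions then yields the stated bound with a constant depending only on $w$.

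I do not expect a genuine obstacle here, as the argument is a faithful mirror of Lemma~\ref{lem:sty}; the only point requiring care is the bookkeeping, namely that the relevant interval is $(y,x_1]$ (to the right of $y$) rather than $(x_0,y]$, so that Lemma~\ref{lem:ab} is invoked with $b=x_1$ and the denominator $w(x_1)^2$ correctly matches the factor $w(x_1)^2$ multiplying the probability in the statement.
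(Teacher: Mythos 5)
Your proposal matches the paper's proof essentially step for step: the same split into the two crossing events $\{X_s\leq x<X_t\}$ and $\{X_t\leq x<X_s\}$, the same confinement of $x$ to $(y,x_1]$ via the definition of $x_1$, the same application of Lemma~\ref{lem:ab} with $a=y$, $b=x_1$, and the same final bound $w(x_1)^2(x_1-y)\leq d(y,x_1)^2\leq\varepsilon^2$ from \eqref{dw} and \eqref{eq:x1}. No gaps; your explicit verification of the last inequality from the formula \eqref{dw} is exactly what the paper leaves implicit.
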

\begin{proof} Let $x_1$ be as in Lemma \ref{lem:x1}. Then,
  \begin{align*} &w(x_1)^2\Pr(\sup_{(s,x):e<\varepsilon, x>
      y}
    |\1_{X_s\leq x}-\1_{X_t\leq x}|>0)\\
    &=w(x_1)^2\Big(\Pr(\exists
    (s,x),e((s,x),(t,y))<\varepsilon, x> y: X_s\leq x <X_t)
    \\ & \qquad \qquad + \Pr(\exists
    (s,x),e((s,x),(t,y))<\varepsilon, x> y: X_t\leq x
    <X_s)\Big)\\ & =w(x_1)^2\Big(\Pr(\exists
    (s,x),e((s,x),(t,y))<\varepsilon, x\in (y, x_1]: X_s\leq
    x <X_t ) \\ & \qquad \qquad + \Pr(\exists
    (s,x),e((s,x),(t,y))<\varepsilon, x\in (y, x_1]: X_t\leq
    x <X_s)\Big)\\ &\leq
    w(x_1)^2(C\varepsilon^2/w(x_1)^2+(x_1 - y)) \mbox{ by
      Lemma ~\ref{lem:ab} } \\ &\leq C\varepsilon^2.
  \end{align*} 
For the last inequality, we used
\[ w(x_1)^2(x_1 - y)\leq d(y, x_1)^2\leq \varepsilon^2 \quad
\text{ by } \eqref{eq:x1}. \qedhere
\]
\end{proof}
In the following lemma, for fixed $(t,y)\in E\times [0,1]$,
we write $\sup\limits_{(s,x):e<\varepsilon}$ for
$\sup\limits_{\{(s,x):e((s,x), (t,y))<\varepsilon\}}$ and
the same applies to other similar quantities.
\begin{lemma}\label{lem:iii} Under the assumptions of
Theorem ~\ref{thm:main1}, we have for all $\varepsilon>0 $
and $(t,y)\in E\times [0,1]$,
\[ \sup_{\alpha>0} \alpha^2\Pr(\sup_{(s,x):e<\varepsilon}
|w(x)\1_{X_s\leq x}-w(y)\1_{X_t\leq y}|>\alpha)\leq
C\varepsilon^2.
\]
\end{lemma}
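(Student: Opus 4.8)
The plan is to split the quantity into a ``horizontal'' (crossing) part and a ``vertical'' part and to prove a weak-$L^2$ bound for each. For $(s,x)$ in the ball $\{e((s,x),(t,y))<\varepsilon\}$, write
\[
w(x)\1_{X_s\le x}-w(y)\1_{X_t\le y}
= \underbrace{w(x)\big(\1_{X_s\le x}-\1_{X_t\le x}\big)}_{=:P(s,x)}
+\underbrace{\big(w(x)\1_{X_t\le x}-w(y)\1_{X_t\le y}\big)}_{=:Q(x)} .
\]
By subadditivity of the supremum and the union bound, it suffices to show $\sup_{\alpha>0}\alpha^2\Pr(\sup|P|>\alpha)\le C\varepsilon^2$ and $\sup_{\alpha>0}\alpha^2\Pr(\sup|Q|>\alpha)\le C\varepsilon^2$ separately, the suprema being taken over the ball, respectively its projection onto the $x$-axis. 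Note that $|P(s,x)|\in\{0,w(x)\}$, being nonzero only when $\1_{X_s\le x}\neq\1_{X_t\le x}$, i.e. on a crossing, which is exactly the event controlled by Lemma~\ref{lem:ab}; while $Q$ depends on the randomness only through $X_t$.

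For the vertical part, I would split according to $x\le y$ or $x>y$ and to the position of $X_t$. In each of the resulting configurations the constraint $d(x,y)<\varepsilon$, read through the explicit formula \eqref{dw}, pins $X_t$ to a short interval whenever $|Q(x)|>\alpha$. For instance, if $x\le y$ and $X_t\le x$ then $|Q(x)|=w(x)-w(y)$, and the summand $(x\wedge y)(w(x)-w(y))^2$ of \eqref{dw} gives $x(w(x)-w(y))^2\le d(x,y)^2<\varepsilon^2$, so $|Q(x)|>\alpha$ forces $X_t\le x<\varepsilon^2/\alpha^2$, whence $\alpha^2\Pr\le\varepsilon^2$. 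The configuration $x>y$, $y<X_t\le x$ uses instead the summand $w(x\vee y)^2|x-y|$ to bound $x-y<\varepsilon^2/\alpha^2$, while the two ``bounded value'' configurations (giving $|Q|=w(y)$, resp.\ $|Q|=w(y)-w(x)$) are controlled through $w(y)^2(y-x_0)\le d(x_0,y)^2\le\varepsilon^2$, which holds by Lemma~\ref{lem:x0} (and its analogue Lemma~\ref{lem:x1}). In all cases $\alpha^2\Pr(\cdots)\le\varepsilon^2$.

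For the crossing part, fix $\alpha$ and observe that $|P(s,x)|>\alpha$ requires a crossing at some $x$ with $w(x)>\alpha$, i.e. $x<w^{-1}(\alpha)$ (recall $w$ is non-increasing near $0$). Splitting at $y$ and putting $b_\alpha:=\min(w^{-1}(\alpha),y)$, Lemma~\ref{lem:ab} applied on $(x_0,b_\alpha]$ bounds each crossing probability by $C\varepsilon^2/w(b_\alpha)^2+(b_\alpha-x_0)$ with $w(b_\alpha)\ge\alpha$, so the leading term contributes at most $C\varepsilon^2/\alpha^2$. The width term is the crux: the naive estimate $\alpha^2(b_\alpha-x_0)$ need not be small, but rewriting $\alpha^2(b_\alpha-x_0)\le w(b_\alpha)^2(b_\alpha-x_0)\le d(x_0,b_\alpha)^2$ via \eqref{dw} and invoking the monotonicity Lemma~\ref{lem:mono} together with Lemma~\ref{lem:x0} yields $d(x_0,b_\alpha)\le d(x_0,y)\le\varepsilon$, hence $\alpha^2(b_\alpha-x_0)\le\varepsilon^2$. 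The region $x>y$ is symmetric, using Lemma~\ref{lem:x1}, $d(y,x_1)\le\varepsilon$, and the fact that there $w(x)\le w(y)$ forces $\alpha<w(y)$. Summing the two crossing directions and the two regions gives $\alpha^2\Pr(\sup|P|>\alpha)\le C\varepsilon^2$.

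The main obstacle is exactly this width term in the crossing estimate: the resolution is to read $\alpha^2(b_\alpha-x_0)$ as a squared $d$-distance and dominate it by $d(x_0,y)^2\le\varepsilon^2$ through Lemma~\ref{lem:mono}. Two boundary checks remain routine. First, the degenerate case $x_0=0$ (which occurs when $\sqrt{y}\,w(y)\le\varepsilon$): here Lemma~\ref{lem:ab} extends to $a=0$ by Lemma~\ref{lem6}, and the limiting form $d(0,b_\alpha)^2=b_\alpha w(b_\alpha)^2$ of \eqref{dw} keeps the estimate valid. Second, when $y\ge\gamma$ the weight $w$ is bounded above and below, so for $\alpha$ larger than $\sup_{[\gamma,1/2]}w$ the crossing event is vacuous, and for bounded $\alpha$ the estimate reduces to the classical unweighted one; thus the substantive case is $y<\gamma$, where Lemma~\ref{lem:mono} applies directly. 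Finally, since $|xw(x)-yw(y)|\le\sqrt{2}\,d(x,y)\le\sqrt{2}\,e((s,x),(t,y))$, this lemma delivers the local modulus condition (iii) of Theorem~\ref{thm:agoz} for the centered process.
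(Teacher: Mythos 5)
Your proposal is correct, and for its two harder components it follows the paper's own route: you use the same decomposition into a crossing part $w(x)(\1_{X_s\le x}-\1_{X_t\le x})$ and a part depending only on $X_t$, and your treatment of the crossing part is the paper's argument in compressed form --- your $b_\alpha=\min(w^{-1}(\alpha),y)$ merges the paper's three cases (its $x_\alpha>y$ case, handled there via Lemmas~\ref{lem:sty} and \ref{lem:gty}; its empty case $x_\alpha\le x_0$; and its main case $x_0<x_\alpha\le y$), with exactly the same resolution of the width term via $w(b_\alpha)^2(b_\alpha-x_0)\le d(x_0,b_\alpha)^2\le d(x_0,y)^2\le\varepsilon^2$ using \eqref{dw}, Lemma~\ref{lem:mono} and Lemma~\ref{lem:x0} (and symmetrically Lemma~\ref{lem:x1} for $x>y$). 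Where you genuinely depart from the paper is the ``vertical'' part $Q(x)=w(x)\1_{X_t\le x}-w(y)\1_{X_t\le y}$: the paper disposes of it in one line by observing that, $t$ being fixed, the supremum is over $\{x:d(x,y)<\varepsilon\}$ and citing the one-dimensional weighted result (Example 4.9 of \cite{AGOZ}), whereas you prove it from scratch by a case analysis on the position of $X_t$, reading each configuration through the explicit formula \eqref{dw} (e.g.\ $x(w(x)-w(y))^2<\varepsilon^2$ forces $X_t\le x<\varepsilon^2/\alpha^2$). This buys self-containedness --- the lemma no longer rests on an external computation --- at the cost of some bookkeeping; one small attribution slip in that bookkeeping: the configuration $x>y$, $X_t\le y$, where $|Q|=w(y)-w(x)$, is controlled by the summand $(x\wedge y)(w(x)-w(y))^2=y(w(y)-w(x))^2\le d(x,y)^2<\varepsilon^2$ (forcing $y<\varepsilon^2/\alpha^2$), not by $w(y)^2(y-x_0)\le\varepsilon^2$ as you wrote; the conclusion $\alpha^2\Pr(\cdot)\le\varepsilon^2$ is unaffected. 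Your two boundary checks ($x_0=0$ via Lemma~\ref{lem6}, and $y\ge\gamma$ where $w$ is bounded above and below) are points the paper passes over silently, and your closing observation $|xw(x)-yw(y)|\le\sqrt2\,d(x,y)$ belongs to the proof of Theorem~\ref{thm:main1} rather than to this lemma, but it is correct and consistent with the paper.
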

\begin{proof} We split the quantity:
$$
 w(x)\1_{X_s\leq x} - w(y)\1_{X_t\leq y} =[w(x)\1_{X_t\leq
x}-w(y)\1_{X_t\leq y}] + [w(x)(\1_{X_s\leq x} - \1_{X_t\leq
x})].
$$
Consider the weak $L_2$ norms of the components:
\begin{gather} A:=\sup_{\alpha>0}
\alpha^2\Pr(\sup_{(s,x):e<\varepsilon} |w(x)\1_{X_t\leq
x}-w(y)\1_{X_t\leq y}|>\alpha) \label{eq:A} \\
B:=\sup_{\alpha>0} \alpha^2\Pr(\sup_{(s,x):e<\varepsilon}
w(x)|\1_{X_s\leq x} - \1_{X_t\leq x}|> \alpha). \label{eq:B}
\end{gather} First we estimate A. Since
\begin{multline*} \sup_{\alpha>0}
\alpha^2\Pr(\sup_{(s,x):e<\varepsilon} |w(x)\1_{X_t\leq
x}-w(y)\1_{X_t\leq y}| >\alpha)\\ \leq \sup_{\alpha>0}
\alpha^2\Pr(\sup_{x:d(x,y)<\varepsilon} |w(x)\1_{X_t\leq
x}-w(y)\1_{X_t\leq y}| >\alpha)
\end{multline*} and $t$ is fixed, this is the case in
Example 4.9 in~\cite{AGOZ}. Hence we have
\begin{equation}\label{A} A:=\sup_{\alpha>0}
\alpha^2\Pr(\sup_{(s,x):e<\varepsilon} |w(x)\1_{X_t\leq
x}-w(y)\1_{X_t\leq y}|>\alpha)\leq C\varepsilon^2.
\end{equation} Now we consider B.  Since
\begin{multline*} \sup_{\alpha>0}
\alpha^2\Pr(\sup_{(s,x):e<\varepsilon} w(x)|\1_{X_s\leq x} -
\1_{X_t\leq x}|> \alpha)\leq \sup_{\alpha>0}
\alpha^2\Pr(\sup_{(s,x):e<\varepsilon, x\leq y}
w(x)|\1_{X_s\leq x} - \1_{X_t\leq x}|> \alpha) \\ \qquad +
\sup_{\alpha>0} \alpha^2\Pr(\sup_{(s,x):e<\varepsilon, x>y}
w(x)|\1_{X_s\leq x} - \1_{X_t\leq x}|> \alpha),
\end{multline*} it suffices to consider bounds of the
last two quantities.  Without loss of generality, we assume
$w(x)$ is monotone on $(0,1/2]$. For $\alpha>0$, let
$$x_\alpha =\sup\{x\in
[0,1/2]: w(x) > \alpha\}.$$ 

\medskip
{\noindent  \emph{Case $x\leq y$}. }\\
Recall $x_0=\inf\{x:e((s,x),(t,y))<\varepsilon\} $.  First
we consider the extreme cases for $x_\alpha$.
\item{(1).} By continuity of $w(\cdot)$,
  if $x_\alpha >y$, then $\alpha\leq w(y)$,
consequently
\begin{multline*} \sup_{\alpha<w(y)}
\alpha^2\Pr(\sup_{(s,x):e<\varepsilon, x\leq y}
w(x)|\1_{X_s\leq x} - \1_{X_t\leq x}|> \alpha)\\ \leq
w(y)^2\Pr(\sup_{(s,x):e<\varepsilon, x\leq y} |\1_{X_s\leq
x}-\1_{X_t\leq x}|>0) \leq C\varepsilon^2 \text{  by Lemma~\ref{lem:sty}}.
\end{multline*}

\item{(2).}  If $x_\alpha\leq x_0$, then $w(x_0)\leq
\alpha$, hence $w(x)\leq \alpha$ for $x_0\leq x $.  For
$\alpha$ such that $x_\alpha\leq x_0$, the event under the probability of \eqref{eq:B} is
empty.

\item{(3).} Now $x_0< x_{\alpha} \leq y$. In this case,
$w(y)\leq \alpha<w(x_0)$.  Take $\varepsilon>0$.  We have
\begin{align*} &B:=\sup_{w(y)\leq \alpha<w(x_0)}
\alpha^2\Pr(\sup_{(s,x):e<\varepsilon} w(x)|\1_{X_s\leq x} -
\1_{X_t\leq x}|> \alpha)\\
&\quad \leq \sup_{w(y)\leq
\alpha<w(x_0)} \alpha^2\Pr(\sup_{(s,x):e<\varepsilon}
w(x)\1_{X_s\leq x< X_t}> \alpha)\\
&\qquad +\sup_{w(y)\leq
\alpha<w(x_0)} \alpha^2\Pr(\sup_{(s,x):e<\varepsilon}
w(x)\1_{X_t\leq x < X_s}> \alpha)\\
&\quad =I+II.
\end{align*} For $I$,
\begin{align*} &I=\sup_{w(y)\leq
\alpha<w(x_0)}\alpha^2\Pr(\sup_{(s,x):e<\varepsilon}
w(x)\1_{X_s \leq x<X_t} > \alpha) \\ &\quad \leq \sup_{x_0<
x_{\alpha}\leq y}w(x_\alpha)^2\Pr(\sup_{(s,x):e<\varepsilon}
w(x)\1_{X_s \leq x< X_t}> \alpha) \\ &\quad \leq \sup_{x_0<
x_{\alpha}\leq y}w(x_\alpha)^2\Pr(\sup_{(s,x):e<\varepsilon}
\1_{ X_s \leq x< X_t, x\leq x_\alpha}>0) \\ &\quad \leq
\sup_{x_0< x_{\alpha}\leq y}w(x_\alpha)^2 \big (
C\varepsilon^2/w(x_\alpha)^2 + (x_\alpha - x_0)\big ) \mbox{
using Lemma~\ref{lem:ab} } \\ &\quad \leq C\varepsilon^2.
\end{align*} For the last inequality, we used
\[ w(x_\alpha)^2 (x_\alpha - x_0) \leq d(x_0,
x_\alpha)^2\leq d(x_0,y)^2\leq \varepsilon^2
\] of Lemma~\ref{lem:mono} and Lemma~\ref{lem:x0}.

$II$ can be handled in the same way.

{\noindent  \emph{Case $x> y$}. }\\
Recall
$x_1=\sup\{x:e((s,x),(t,y))<\varepsilon\} $.  First we
consider the extreme cases for $x_\alpha$.
\item{(1).}  By continuity of $w(\cdot)$,
  if $x_\alpha >x_1$, then $\alpha\leq w(x_1)$,
consequently
\begin{multline*} \sup_{\alpha<w(x_1)}
\alpha^2\Pr(\sup_{(s,x):e<\varepsilon, x> y} w(x)|\1_{X_s\leq
x} - \1_{X_t\leq x}|> \alpha)\\ \leq
w(x_1)^2\Pr(\sup_{(s,x):e<\varepsilon, x> y} |\1_{X_s\leq
x}-\1_{X_t\leq x}|>0) \leq C\varepsilon^2.
\end{multline*} by Lemma~\ref{lem:gty}. 
consider $\alpha\geq w(y)$, i.e. $x_\alpha\leq y$.
\item{(2).}  If $x_\alpha\leq y$, then $w(y)\leq \alpha$,
hence $w(x)\leq \alpha$ for $y\leq x $.  For $\alpha$ such that
$x_\alpha\leq y$, 
the event under the probability of \eqref{eq:B} is empty.

\item{(3).} Now $y< x_{\alpha} \leq x_1$. In this case,
$w(x_1)\leq \alpha<w(y)$.  Take $\varepsilon>0$.  We have
\begin{align*} &B:=\sup_{w(x_1)\leq \alpha<w(y)}
\alpha^2\Pr(\sup_{(s,x):e<\varepsilon} w(x)|\1_{X_s\leq x} -
\1_{X_t\leq x}|> \alpha)\\
&\quad \leq \sup_{w(x_1)\leq
\alpha<w(y)} \alpha^2\Pr(\sup_{(s,x):e<\varepsilon}
w(x)\1_{X_s\leq x< X_t}> \alpha)\\
&\qquad
+\sup_{w(x_1)\leq \alpha<w(y)}
\alpha^2\Pr(\sup_{(s,x):e<\varepsilon} w(x)\1_{X_t\leq x <
X_s}> \alpha)\\
&\quad =I+II.
\end{align*} For $I$,
\begin{align*} &I=\sup_{w(x_1)\leq
\alpha<w(y)}\alpha^2\Pr(\sup_{(s,x):e<\varepsilon}
w(x)\1_{X_s \leq x<X_t} > \alpha) \\ &\quad \leq \sup_{y<
x_{\alpha}\leq
x_1}w(x_\alpha)^2\Pr(\sup_{(s,x):e<\varepsilon} w(x)\1_{X_s
\leq x< X_t}> \alpha) \\ &\quad \leq \sup_{y< x_{\alpha}\leq
x_1}w(x_\alpha)^2\Pr(\sup_{(s,x):e<\varepsilon} \1_{ X_s \leq
x< X_t, x\leq x_\alpha}>0) \\ &\quad \leq \sup_{y<
x_{\alpha}\leq x_1}w(x_\alpha)^2 \big (
C\varepsilon^2/w(x_\alpha)^2 + (x_\alpha - y)\big ) \mbox{
using Lemma~\ref{lem:ab} } \\ &\quad \leq C\varepsilon^2.
\end{align*} For the last inequality, we used
\[ w(x_\alpha)^2 (x_\alpha - y) \leq d(y, x_\alpha)^2\leq
d(y, x_1)^2\leq \varepsilon^2
\] of Lemma~\ref{lem:mono} and Lemma~\ref{lem:x1}.

$II$ can be handled in the same way.  Hence we have $B\leq
C\varepsilon^2 $. This together with \eqref{A} completes the
proof.
\end{proof}
\begin{proof}[Proof of Theorem~\ref{thm:main1}] We apply
  Theorem~\ref{thm:agoz} to the process
$\{w(y)(\1_{X_t\leq y} - y):\, t\in E, y\in [0,1]\} $.

Since for each $s\in E$, $X(s)$ takes values on $(0,1)$
and $xw(x)\rightarrow 0$ as $x\to 0$, almost surely
$$\sup\limits_{s\in E, \, x\in [0,1/2]}w(x)|\1_{X_s\leq x}-x|<\infty.$$ 
Also we observe for each $s\in E, x\in [0,1/2]$
$$\P(w(x)(\1_{X_s\leq x}-x))^2<\infty.$$

Since $w(x)$ is decreasing near $0$,
\begin{align*} \lim_{\alpha\to \infty}\alpha^2\Pr(\sup_{s\in
E, x\in (0,1/2]}w(x)\1_{X_s\leq x} >\alpha) &\leq
\lim_{\alpha\to \infty}\alpha^2\Pr(\sup_{s\in E}w(X_s)
>\alpha)\\ &=0 \text{ by assumption of
Theorem~\ref{thm:main1}},
\end{align*} 
which in turn implies
\begin{align*} \lim_{\alpha\to \infty}\alpha^2\Pr(\sup_{s\in
E, x\in (0,1/2]}w(x)|\1_{X_s\leq x}-x|>\alpha) =0.
\end{align*} 
This verifies $(i)$ in Theorem
\ref{thm:agoz}. Corollary~\ref{cor:preg} verifies the pre-Gaussian 
condition $(ii)$. 

In view of Lemma~\ref{lem:iii} and the inequality
\[
 \Lambda_{2,\infty}(f+g)\leq C  (\Lambda_{2,\infty}(f) +
 \Lambda_{2,\infty}(g))
\]
where $\Lambda_{2,\infty}(f):=[\sup_{t>0} t^2\P(\{|f|>t\})]^{1/2}$
for some constant $C$, to verify the local modulus condition
($iii$) in Theorem \ref{thm:agoz} for the functions $w(x)(\1_{X_s\leq x}-x)$,
it is enough to have
\begin{equation}\label{eq:wl2}
 \sup_{\alpha>0} \alpha^2 \Pr( \sup_{d(x,y)\leq \epsilon} |w(x)x -w(y)y|>\alpha) \leq K \epsilon^2
\end{equation}
for some constant K.
W.o.l.g, assume $x<y$. 
Inequality \ref{eq:wl2} follows from 
\begin{align*}
  |xw(x) - yw(y)|^2 &\leq 2x^2(w(x) - w(y))^2 + 2 w(y)^2(y-x)^2 \\
                    &\leq 2x(w(x) - w(y))^2 + 2 w(y)^2(y-x) \\
                    &=2d(x,y)^2 \text{ by \ref{dw} }\\
                    &\leq 2\epsilon^2.\qedhere
\end{align*}
\end{proof}

\section{An example} \label{s:example} A special class of
uniform processes (copula processes) can be obtained from
distributional transforms. Specifically, given a process
$Y:=\{Y_t:t\in E\} $, define $X:=X_t:=\tilde F_t(Y_t) $,
where $\tilde F_t(\cdot)$ is the distributional transform of
the df of $Y_t$.  Now, we give an example as an application of
Theorem~\ref{thm:main1} when $\{Y_t:t\in E\}=\{B_t:t\in
[1,2]\} $, where $B_t$ is a Brownian motion.
\begin{theorem}\label{thm:ex} Let $\{B_t:t\geq 0\}$ be a
  Brownian motion and $F_t(x)$ be the distribution function
  of $B_t$.  Let $w(x)=x^{-\alpha}L(x)$, for $0<x<1/2$,
  $0<\alpha<1/2$, and $L(x)$ slowly varying at $0$ and
  assume $w(x)$ is symmetric about $1/2 $. Further assume
  that $w(x)$ is non-increasing and
  $xw(x)^2$ non-decreasing near $0$. Then
\[ \{ w(F_t(y))(\1_{B_t\leq y} - F_t(y)):t\in [1,2], \, y\in \R
\} \in \CLT \text{ in } \ell^\infty([1,2]\times \R).
\]
\end{theorem}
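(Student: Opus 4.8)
The plan is to deduce Theorem~\ref{thm:ex} from Theorem~\ref{thm:main1} via Proposition~\ref{prop:X2Y}. First I would set $X_t := \tilde F_t(B_t)$. Since $B_t\sim N(0,t)$ has the continuous, strictly increasing distribution function $F_t(y) = \Phi(y/\sqrt{t})$ (with $\Phi$ the standard normal df and $\phi$ its density), the randomized transform is automatic, $X_t = \Phi(B_t/\sqrt t)$, and by Lemma~\ref{lem:dist-trans} this is a uniform process. By part $(i)$ of Proposition~\ref{prop:X2Y} (applicable because each $F_t$ is strictly increasing), it suffices to prove $\{w(x)(\1_{X_t\le x} - x): t\in[1,2],\, x\in[0,1]\}\in\CLT$ in $\ell^\infty([1,2]\times[0,1])$, for which I would verify the hypotheses of Theorem~\ref{thm:main1} for the pair $(X;w)$; the pre-Gaussian and local modulus conditions are then supplied internally by that theorem.

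The structural hypotheses on $w$ are immediate: $w(x)=x^{-\alpha}L(x)$ is regularly varying at $0$ with exponent $-\alpha$, and continuity, symmetry, and the monotonicity of $w$ and of $xw(x)^2 = x^{1-2\alpha}L(x)^2$ near $0$ are assumed. For the integral condition \eqref{eq:1d-weight} I would note that $sw(s)^2 = s^{1-2\alpha}L(s)^2$ with $1-2\alpha>0$, so by a Potter-bound estimate $s^{-(1-2\alpha)}L(s)^{-2}$ grows faster than any multiple of $\log(1/s)$ as $s\to0$; hence $\exp[-c/(sw(s)^2)]=O(s^2)$ and the integrand is integrable near $0$. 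For the tail condition I would use the symmetry and monotonicity of $w$ to write, with $z_u := \Phi^{-1}(1-w^{-1}(u))\to\infty$,
\[
\Pr\Bigl(\sup_{t\in[1,2]} w(X_t) > u\Bigr) \le 2\,\Pr\Bigl(\sup_{t\in[1,2]} \bigl(-B_t/\sqrt t\bigr) > z_u\Bigr).
\]
The process $-B_t/\sqrt t$ is centered Gaussian with unit variance, so Borell--TIS gives the bound $\exp(-(z_u-m_0)^2/2)$ with $m_0 = \E\sup_t(-B_t/\sqrt t)<\infty$. Since $w^{-1}(u)=u^{-1/\alpha+o(1)}$ and Mills' ratio gives $z_u^2\sim(2/\alpha)\log u$, the tail is $u^{-1/\alpha+o(1)}$ and $u^2\Pr(\cdots)\to0$ precisely because $\alpha<1/2$.

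The heart of the argument is the WL-condition. Writing $Z_t := B_t/\sqrt t$, a centered Gaussian process with $\E Z_sZ_t = r_{st} := \sqrt{(s\wedge t)/(s\vee t)}$ and $\E(Z_s-Z_t)^2\le|s-t|$, the crossing events become $\{X_t\le x<X_s\}=\{Z_t\le c<Z_s\}$ with $c=\Phi^{-1}(x)$; by Remark~\ref{rem:main1} I only treat $0<x<1/2$, i.e.\ $c<0$. I would take $\rho(s,t):=|s-t|^{1/4}$, a continuous Gaussian distance, being the $L_2$ distance of fractional Brownian motion of Hurst index $1/4$ (sample bounded and uniformly continuous on $[1,2]$); crucially $\rho(s,t)\le\varepsilon$ forces $|s-t|\le\varepsilon^4$. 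To bound the crossing probability I would decompose $Z_s = r_{st}Z_t + \xi_s$ with $\xi_s\perp Z_t$ and $\mathrm{Var}(\xi_s)=1-r_{st}^2\le 2(1-r_{st})=\E(Z_s-Z_t)^2\le|s-t|\le\varepsilon^4$, condition on $Z_t=v$, and apply a Borell bound to $\Xi=\sup_s(\pm\xi_s)$ (which satisfies $\E\Xi\lesssim\varepsilon^2$); integrating the resulting Gaussian tail against the $N(0,1)$ density of $Z_t$ yields, for small $\varepsilon$, a bound of the form $C\varepsilon^2\phi(c)\exp(C\varepsilon^4 c^2)$. The proof is then closed by the elementary but essential inequality
\[
\phi(c)\,w(x)^2 = \phi(c)\,\Phi(c)^{-2\alpha}L(\Phi(c))^2 \longrightarrow 0 \quad (c\to-\infty),
\]
so that $C_w:=\sup_{c<0}\phi(c)w(x)^2<\infty$; this is exactly where $\alpha<1/2$ enters, through $\Phi(c)^{1-2\alpha}\to0$ dominating the logarithmic growth of $|c|$ from Mills' ratio. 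I expect the main obstacle to be controlling the factor $\exp(C\varepsilon^4 c^2)$ uniformly: for $\varepsilon$ below a threshold $\varepsilon_0$ depending on $\tfrac12-\alpha$ it is absorbed into $C_w$, while for $\varepsilon\ge\varepsilon_0$ I would instead bound the crossing probability by the whole-interval quantity $\Pr(\sup_{[1,2]}(\pm Z_s)\ge|c|)\le\exp(-(|c|-m_0)^2/2)$ and again invoke $\alpha<1/2$ to dominate the target $L\varepsilon^2/w(x)^2$. The downward crossing \eqref{WLst} follows from the upward one by the symmetry $Z\mapsto -Z$. With the WL-condition and the two preceding verifications in hand, Theorem~\ref{thm:main1} applies to $(X;w)$ and Proposition~\ref{prop:X2Y}$(i)$ transfers the CLT to the $B$-process, which is the claim.
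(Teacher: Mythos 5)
Your proposal is correct and follows essentially the same route as the paper's proof: the reduction through Proposition~\ref{prop:X2Y}$(i)$, the envelope verification via Borell's inequality for $\sup_t(-B_t/\sqrt t)$, and the verification of the WL-condition by conditioning on $B_t/\sqrt t$ and applying Borell's inequality to the supremum of the increment part independent of $B_t$, together with a fractional-Brownian-motion metric $|s-t|^{1/\theta}$, are exactly the paper's steps (Lemmas~\ref{lem:env}, \ref{lem:envelope}, \ref{lem:l}, \ref{lem:-l} and Propositions~\ref{prop:d1}, \ref{prop:d2}); your orthogonal decomposition $Z_s=r_{st}Z_t+\xi_s$ even coincides with the paper's $(B_s-B_t)/\sqrt s$ decomposition for $s>t$. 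The only genuine difference is bookkeeping of the same Gaussian loss: the paper takes $\theta>4$ depending on $\alpha$ so that the exponent $t/(t+\varepsilon^\theta)>2\alpha$ absorbs it, whereas you fix $\theta=4$ and dispose of the factor $\exp(C\varepsilon^4c^2)$ by a two-regime split in $\varepsilon$ --- both work.
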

\begin{remarks} The interval $[1,2]$ can be replaced by
  any interval $[a,b]$ provided $a>0$, which can be seen from the proof of
  the above theorem; also a priori, we need $F_t(\cdot)$ be strictly increasing.
\end{remarks}
We will verify the conditions in Proposition
\ref{prop:X2Y} to prove this theorem at the end of this
section.  To this end, we start with some lemmas. For the
following, let $\phi(x)= (2\pi)^{-1/2}e^{-x^2/2}$ and
$\Phi(y):= (2\pi)^{-1/2} \int_{-\infty}^y e^{-s^2/2}\,ds$.

\begin{lemma}[\cite{Feller68}, p. 175]\label{lem:tail} For
$y>0$,
\[ y^{-1}(1-y^{-2})(2\pi)^{-1/2}e^{-y^2/2}\leq \Phi(-y) \leq
y^{-1}(2\pi)^{-1/2}e^{-y^2/2}.
\] In particular, for $y>\sqrt 2$,
\[ 2^{-1}y^{-1}(2\pi)^{-1/2}e^{-y^2/2}\leq \Phi(-y) \leq
y^{-1}(2\pi)^{-1/2}e^{-y^2/2}.
\]
\end{lemma}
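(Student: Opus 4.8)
The statement is the classical Gaussian (Mills-ratio) tail estimate, so the plan is a direct, self-contained computation rather than an appeal to any structural result. The key ingredient is the defining identity $\phi'(s)=-s\phi(s)$, equivalently $s\phi(s)=-\phi'(s)$, together with the representation $\Phi(-y)=\int_{-\infty}^{-y}\phi(s)\,ds=\int_y^\infty\phi(s)\,ds$, valid for $y>0$ by the symmetry $\phi(-s)=\phi(s)$. I would extract both inequalities from a single repeated integration by parts, truncating the resulting expansion after one and after two terms and controlling the sign of the remainder integral in each case.

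For the upper bound, write $\phi(s)=\tfrac1s\bigl(s\phi(s)\bigr)=-\tfrac1s\phi'(s)$ and integrate by parts over $(y,\infty)$ with $u=1/s$ and $dv=-\phi'(s)\,ds$, so that $v=-\phi(s)$. Since the boundary term $s^{-1}\phi(s)$ vanishes as $s\to\infty$, this gives
\[
 \Phi(-y)=\frac{\phi(y)}{y}-\int_y^\infty \frac{\phi(s)}{s^2}\,ds,
\]
and because the remaining integral is strictly positive, $\Phi(-y)\le y^{-1}\phi(y)$. For the lower bound I would integrate the correction term once more, writing $s^{-2}\phi(s)=-s^{-3}\phi'(s)$ and taking $u=1/s^3$, $dv=-\phi'(s)\,ds$; the boundary term $s^{-3}\phi(s)$ again vanishes at infinity, yielding
\[
 \Phi(-y)=\frac{\phi(y)}{y}-\frac{\phi(y)}{y^3}+3\int_y^\infty \frac{\phi(s)}{s^4}\,ds.
\]
Dropping the last (positive) integral produces $\Phi(-y)\ge y^{-1}(1-y^{-2})\phi(y)$, which is exactly the first displayed lower bound. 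Both boundary evaluations are justified by $s^{-k}e^{-s^2/2}\to 0$ as $s\to\infty$, and the remainder integrals converge by the same Gaussian decay.

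For the ``in particular'' clause, observe that $y>\sqrt2$ forces $y^{-2}<\tfrac12$, hence $1-y^{-2}>\tfrac12$; multiplying the general lower bound by this gives $\Phi(-y)\ge y^{-1}(1-y^{-2})\phi(y)>\tfrac12 y^{-1}\phi(y)$, while the upper bound is unchanged. I do not expect a genuine obstacle: the argument is elementary, and the only steps needing a line of justification are the vanishing of the boundary terms at $+\infty$ and the positivity of the truncated remainder integrals, both immediate from the super-exponential decay of $\phi$. An equivalent route, if one prefers to avoid integration by parts, is to set $\psi(y)=\Phi(-y)-y^{-1}(1-y^{-2})\phi(y)$, verify $\psi'(y)=-3y^{-4}\phi(y)<0$ with $\psi(\infty)=0$ to obtain the lower bound, and likewise $\chi(y)=y^{-1}\phi(y)-\Phi(-y)$ with $\chi'(y)=-y^{-2}\phi(y)<0$ and $\chi(\infty)=0$ for the upper bound.
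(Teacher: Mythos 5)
Your proposal is correct: both integrations by parts are computed accurately (the boundary terms vanish by Gaussian decay, the remainder integrals have the stated signs), and the monotonicity variant also checks out, since $\psi'(y)=-3y^{-4}\phi(y)$ and $\chi'(y)=-y^{-2}\phi(y)$ with both functions vanishing at infinity. The paper gives no proof of this lemma --- it simply cites Feller (1968), p.~175 --- and your argument is exactly the classical one found there, so there is nothing to compare beyond noting that you have supplied the standard proof the paper leaves to the reference.
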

\begin{lemma}[\cite{Seneta76}, p. 18] \label{lem:sen} Let
$L(x)$ be a slowly varying function at $0$, then for any
$\gamma>0$,
$$x^\gamma L(x)\rightarrow 0, x^{-\gamma} L(x)\rightarrow \infty \text{ as }x\rightarrow 0. $$
Consequently, for $0<\gamma_1<2\alpha<\gamma_2<1 $ and a
function $L(x)$ slowly varying {\normalfont (at $0$)}, there
are constants $c_1$, $c_2$,
\[ c_1x^{\gamma_2}\leq x^{2\alpha}/L(x) \leq
c_2x^{\gamma_1}, \quad 0<x<1/2.
\]
\end{lemma}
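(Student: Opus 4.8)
The plan is to reduce everything to the behaviour of slowly varying functions at infinity by the substitution $y=1/x$, and then to obtain the second (quantitative) assertion as a consequence of the first. Setting $\tilde L(y):=L(1/y)$, slow variation of $L$ at $0$ is precisely slow variation of $\tilde L$ at $\infty$, since $\tilde L(\lambda y)/\tilde L(y)=L((1/\lambda)\cdot 1/y)/L(1/y)\to 1$ as $y\to\infty$ for every $\lambda>0$. Thus the first display is equivalent to the classical facts $y^{-\gamma}\tilde L(y)\to 0$ and $y^{\gamma}\tilde L(y)\to\infty$ as $y\to\infty$, and the whole lemma rests on proving the first display.

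For the first part I would invoke the Karamata representation: a measurable function slowly varying at $0$ can be written, for $v$ in a right neighbourhood $(0,b)$ of the origin, as
\[
 L(x)=c(x)\exp\Big(\int_x^{b}\frac{\epsilon(v)}{v}\,dv\Big),\qquad c(x)\to c\in(0,\infty),\ \epsilon(v)\to 0\ (v\to 0),
\]
which one obtains from the representation at infinity by the change of variable $v=1/u$. Writing $\gamma\ln x=\gamma\ln b-\gamma\int_x^b v^{-1}\,dv$, the exponent of $x^{\gamma}L(x)$ becomes $\ln c(x)+\gamma\ln b+\int_x^{b}(\epsilon(v)-\gamma)v^{-1}\,dv$. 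Since $\epsilon(v)\to 0$, we have $\epsilon(v)-\gamma<-\gamma/2$ on some $(0,\delta)$, so this integral is at most a constant minus $\tfrac{\gamma}{2}\ln(\delta/x)\to-\infty$; as $c(x)$ stays bounded, $x^{\gamma}L(x)\to 0$. The companion statement $x^{-\gamma}L(x)\to\infty$ follows identically with $\epsilon(v)+\gamma>\gamma/2>0$ on $(0,\delta)$. Alternatively one can avoid the representation theorem and argue by the uniform convergence theorem along the geometric sequence $x=2^{-n}\theta$, bounding each consecutive ratio $L(2^{-(k+1)}\theta)/L(2^{-k}\theta)$ by $2^{\gamma/2}$ for small $\theta$ and interpolating, in the same spirit as Lemma~\ref{lem6}.

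For the second part I would apply the first part to $1/L$, which is again slowly varying at $0$ because $(1/L)(\lambda x)/(1/L)(x)=L(x)/L(\lambda x)\to 1$. Factor $x^{2\alpha}/L(x)=x^{\gamma_1}\cdot\big(x^{2\alpha-\gamma_1}/L(x)\big)=x^{\gamma_2}\cdot\big(x^{2\alpha-\gamma_2}/L(x)\big)$. Since $2\alpha-\gamma_1>0$, the first part applied to $1/L$ gives $x^{2\alpha-\gamma_1}/L(x)\to 0$, so this factor is bounded above on $(0,1/2)$—it tends to $0$ at the origin and is bounded on each $[\eta,1/2)$ because a measurable slowly varying function is locally bounded and bounded away from $0$—yielding the upper bound with some $c_2$. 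Since $\gamma_2-2\alpha>0$, the first part applied to $1/L$ with exponent $\gamma_2-2\alpha$ gives $x^{2\alpha-\gamma_2}/L(x)=x^{-(\gamma_2-2\alpha)}/L(x)\to\infty$, so this factor is bounded below by a positive constant on $(0,1/2)$, giving the lower bound with some $c_1>0$.

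The only genuine obstacle is the first part: the pointwise defining relation $L(\lambda x)/L(x)\to 1$ is by itself too weak, and one must upgrade it—through measurability—to either the representation theorem or the uniform convergence theorem before any power-law comparison can be made. Once that upgrade is in hand, the exponent estimate above and the reduction of the quantitative bounds to $1/L$ are routine, the remaining point being only the standard local boundedness of measurable slowly varying functions away from the singular endpoint.
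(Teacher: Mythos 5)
The paper does not prove this lemma at all: it is quoted verbatim from Seneta (p.~18), and the ``Consequently'' clause is left as an unproved immediate consequence, so there is no internal argument to compare against. Your reconstruction is correct and is essentially the standard proof the citation points to: the reduction to slow variation at infinity via $y=1/x$ is exact, the Karamata-representation computation giving $x^{\gamma}L(x)\to 0$ and $x^{-\gamma}L(x)\to\infty$ is sound (and your alternative geometric-sequence argument is indeed the same device the paper itself uses in Lemma~\ref{lem6}), and the factorizations $x^{2\alpha}/L(x)=x^{\gamma_1}\cdot\bigl(x^{2\alpha-\gamma_1}/L(x)\bigr)=x^{\gamma_2}\cdot\bigl(x^{2\alpha-\gamma_2}/L(x)\bigr)$ correctly convert the two limits into the two-sided power bounds. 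What your write-up buys over the paper is self-containedness; what it costs is one imprecise appeal: ``a measurable slowly varying function is locally bounded and bounded away from $0$'' on each $[\eta,1/2)$ is not a consequence of slow variation at $0$, which constrains $L$ only on some $(0,\delta)$ and says nothing on a fixed interval away from the origin. Indeed, for merely measurable $L$ the stated bound on all of $(0,1/2)$ can fail: take $L\equiv 1$ on $(0,1/4]$ and $L(x)=(x-1/4)^{-1}$ on $(1/4,1/2)$, which is slowly varying at $0$ but makes $x^{2\alpha}/L(x)\to 0$ as $x\downarrow 1/4$, defeating any $c_1>0$. The gap is harmless in context -- the lemma is applied with $L(x)=x^{\alpha}w(x)$ for a continuous positive weight $w$, so $L$ is automatically bounded above and below on compact subsets of $(0,1/2]$ -- but you should invoke that continuity (or restrict the constants to a neighborhood of $0$) rather than a general fact that is false as stated.
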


For $c>0$, let $L_c(x)=\exp(c\sqrt{\ln(1/x)})$.
\begin{lemma}\label{lem:L} The function $L_c(x)$ is slowly
varying at $0$; that is for all $\lambda>0$
\[ \lim_{x\rightarrow 0}\frac{L_c(\lambda x)}{L_c(x)} =1.
\]
\end{lemma}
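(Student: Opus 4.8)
The plan is to reduce the claim to the elementary fact that $\sqrt{u-a}-\sqrt{u}\to 0$ as $u\to\infty$ for any fixed constant $a$. First I would substitute $u:=\ln(1/x)$, noting that $u\to+\infty$ as $x\to 0^+$, and that for any fixed $\lambda>0$ we have $\lambda x<1$ once $x$ is small enough, so that $\ln(1/(\lambda x))=u-\ln\lambda$ is positive and the square roots appearing below are well defined. Writing the quotient out directly from the definition $L_c(x)=\exp(c\sqrt{\ln(1/x)})$ gives
\[
\frac{L_c(\lambda x)}{L_c(x)}=\exp\!\Big(c\big[\sqrt{\ln(1/(\lambda x))}-\sqrt{\ln(1/x)}\big]\Big)=\exp\!\Big(c\big[\sqrt{u-\ln\lambda}-\sqrt{u}\big]\Big).
\]

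Next I would control the exponent by the conjugate trick: multiplying and dividing by $\sqrt{u-\ln\lambda}+\sqrt{u}$ yields
\[
\sqrt{u-\ln\lambda}-\sqrt{u}=\frac{-\ln\lambda}{\sqrt{u-\ln\lambda}+\sqrt{u}}.
\]
Since $\ln\lambda$ is a fixed constant while the denominator tends to $+\infty$ as $u\to\infty$, the right-hand side tends to $0$. Hence $c\big[\sqrt{u-\ln\lambda}-\sqrt{u}\big]\to 0$, and by continuity of the exponential the quotient tends to $\exp(0)=1$, which is precisely the slow-variation condition to be verified.

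There is essentially no hard step here; the only point requiring a word of care is the domain issue, namely ensuring that $\lambda x$ lies in the range where $L_c$ is defined and $\ln(1/(\lambda x))\ge 0$, which is harmless since it suffices to take $x$ small depending on $\lambda$ and we are only interested in the limit as $x\to 0$. I expect the conjugate-rationalization to be the single genuine idea of the argument, with everything else being routine.
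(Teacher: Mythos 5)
Your proof is correct and is exactly the routine verification the paper has in mind: the paper's own proof is simply ``By definition,'' i.e.\ the direct computation $L_c(\lambda x)/L_c(x)=\exp\bigl(c[\sqrt{u-\ln\lambda}-\sqrt{u}]\bigr)\to e^{0}=1$ with $u=\ln(1/x)$, which you carry out in full via the conjugate trick. No discrepancy to report.
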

\begin{proof} By definition.
\end{proof}
\begin{lemma}\label{lem:y} For $0<x<1/4$, let
$y=-\Phi^{-1}(x)$. Then
$$y\leq \sqrt{2\ln(1/x)}$$ and
$$ \phi(-\Phi^{-1}(x) + c) \leq  Cx L_C(x) \qquad\quad \text{ for } c<0, $$
$$\phi(-\Phi^{-1}(x) + c) \leq 2^{3/2}x\sqrt{\ln(1/x)} \quad \text{ for } c\geq 0,$$
where $C$ depends only on $c$.
\end{lemma}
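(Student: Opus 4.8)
The plan is to exploit the defining relation $x=\Phi(-y)$ (equivalently $\Phi^{-1}(x)=-y$, so $y>0$) together with the two-sided Gaussian tail estimates of Lemma~\ref{lem:tail}, and to reduce all three assertions to one common computation.

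First I would establish $y\le\sqrt{2\ln(1/x)}$, which is equivalent to $x\le e^{-y^2/2}$. The upper tail bound in Lemma~\ref{lem:tail} gives $x=\Phi(-y)\le y^{-1}(2\pi)^{-1/2}e^{-y^2/2}$, so it suffices that $y^{-1}(2\pi)^{-1/2}\le 1$, i.e. $y\ge(2\pi)^{-1/2}$. Since $x<1/4<\Phi(-(2\pi)^{-1/2})$ and $y\mapsto\Phi(-y)$ is decreasing, indeed $y>(2\pi)^{-1/2}$, which proves the first inequality for every $0<x<1/4$.

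For the two bounds on $\phi(y+c)$ I would start from the exact identity $\phi(y+c)=\phi(y)\,e^{-cy-c^2/2}$ and from the lower tail bound of Lemma~\ref{lem:tail}, valid for $y>\sqrt2$, namely $\Phi(-y)\ge \tfrac12 y^{-1}\phi(y)$, which rearranges to $\phi(y)\le 2yx$. Combined with $y\le\sqrt{2\ln(1/x)}$ from the first step this gives $\phi(y)\le 2^{3/2}x\sqrt{\ln(1/x)}$. When $c\ge0$ we have $\phi(y+c)\le\phi(y)$ because $\phi$ is even and decreasing on $[0,\infty)$ and $y>0$, so the third displayed bound follows at once. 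When $c<0$, writing $|c|=-c>0$ and dropping the harmless factor $e^{-c^2/2}\le1$, we get $\phi(y+c)=\phi(y)e^{|c|y-c^2/2}\le 2^{3/2}x\sqrt{\ln(1/x)}\,e^{|c|\sqrt{2\ln(1/x)}}$; setting $u:=\sqrt{\ln(1/x)}$ I would absorb the algebraic factor $2^{3/2}u$ into the exponential by choosing $C:=\max(|c|\sqrt2+1,\,2^{3/2})$, which depends only on $c$: then $C e^{Cu}\ge C e^{u}e^{|c|\sqrt2 u}\ge 2^{3/2}u\,e^{|c|\sqrt2 u}$ using $u\le e^{u}$, yielding $\phi(y+c)\le C x L_C(x)$ with $L_C(x)=\exp(C\sqrt{\ln(1/x)})$.

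The one point requiring care is that the lower tail bound, and hence $\phi(y)\le 2yx$, is only available for $y>\sqrt2$, i.e. for $x<\Phi(-\sqrt2)$; on the complementary compact range $\Phi(-\sqrt2)\le x<1/4$ both sides of each claimed inequality are continuous and strictly positive, so the bounds persist there by a direct comparison of continuous functions (with the displayed constant $2^{3/2}$ for $c\ge0$, using $\sqrt{\ln(1/x)}\ge1$ as $x<1/e$, and after possibly enlarging $C$ for $c<0$). I expect this matching of the tail regime $y>\sqrt2$ with the bounded regime, rather than any single estimate, to be the only mildly delicate step; everything else is the mechanical combination of Lemma~\ref{lem:tail} with the first inequality and with the elementary bound $u\le e^{u}$.
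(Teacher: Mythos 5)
Your proposal is correct and takes essentially the same route as the paper's proof: the upper tail bound of Lemma~\ref{lem:tail} yields $y\leq\sqrt{2\ln(1/x)}$, the factorization $\phi(y+c)=\phi(y)e^{-cy}e^{-c^2/2}$ combined with the lower tail bound yields $\phi(y+c)\leq 2xy\,e^{-cy}$, and the cases $c\geq 0$ and $c<0$ are finished exactly as you do (via $y\leq\sqrt{2\ln(1/x)}$ and via absorbing $y\,e^{|c|y}$ into $Ce^{Cy}$, respectively). The only difference is that you explicitly treat the range $\Phi(-\sqrt{2})\leq x<1/4$, where the lower-tail estimate of Lemma~\ref{lem:tail} (which requires $y>\sqrt{2}$) is not available, whereas the paper applies that estimate silently on all of $0<x<1/4$; so on this minor point your argument is the more careful one.
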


\begin{proof} By Lemma \ref{lem:tail}, for
$y>(2\pi)^{-1/2}$, $x\leq e^{-y^2/2}$; hence $y\leq
\sqrt{2\ln(1/x)}$.
\begin{align*} \phi(-\Phi^{-1}(x) + c) &=
(2\pi)^{-1/2}\exp(-\tfrac{(y + c)^2}{2}) \\ &=
(2\pi)^{-1/2}\exp(-\tfrac{y^2}{2})\exp(-yc)\exp(-c^2/2) \\
&\leq 2y\Phi(-y)\exp(-yc) \qquad \text{ by Lemma
\ref{lem:tail} }\\ &\leq 2xy\exp(-yc).
\end{align*} The statement for $c>0$ follows from that
$\exp(-yc)\leq 1$ and $y\leq \sqrt{2\ln(1/x)}$.  For $c\leq
0$ the statement follows from that $y\leq C\exp(yC)$ for
some constant $C$.
\end{proof}
\begin{theorem}[Borell, see also \cite{Ledoux2001}, Theorem
7.1]\label{thm:borell} Let $G=(G_t)_{t\in T}$ be a centered
Gaussian process indexed by a countable set $T$ such that
$\sup_{t\in T} G_t < \infty$ almost surely. Then,
$\E(\sup_{t\in T}G_t)< \infty$ and for every $r>0$
\[ \P(\{\sup\limits_{t\in T} G_t \geq \E(\sup_{t\in T}G_t)
+r\})\leq e^{-r^2/2\sigma^2},
\] where $\sigma=\sup_{t\in T} (\E G_t^2)^{1/2}$.
\end{theorem}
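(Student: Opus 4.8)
The plan is to prove this through the classical route: reduce to finite index sets, identify the supremum as a Lipschitz function of a standard Gaussian vector, and apply Gaussian concentration for Lipschitz functions. First I would reduce to finite $T$. Enumerating $T=\{t_1,t_2,\ldots\}$ and setting $M_n:=\max_{1\le i\le n}G_{t_i}$, we have $M_n\uparrow M:=\sup_{t\in T}G_t$ pointwise. Since $M_n\ge G_{t_1}$, which is integrable, monotone convergence applied to $M_n-G_{t_1}\ge 0$ gives $\E M_n\uparrow \E M$ (a value in $(-\infty,+\infty]$). It then suffices to (a) prove a two-sided concentration bound for each finite $M_n$ about its own mean, (b) deduce $\sup_n\E M_n<\infty$ from the almost sure finiteness of $M$, and (c) pass to the limit to recover the stated one-sided bound about $\E M$.

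For the finite-dimensional step, fix $n$ and write the centered Gaussian vector $(G_{t_1},\ldots,G_{t_n})$ as $Ag$, where $g$ is a standard Gaussian vector in $\R^n$ and $A$ is $n\times n$ with $AA^{\top}$ equal to the covariance matrix $\Sigma$. Then $M_n=F(g)$ with $F(x):=\max_{1\le i\le n}\langle a_i,x\rangle$, where $a_i$ is the $i$-th row of $A$. Using $|\max_i u_i-\max_i v_i|\le \max_i|u_i-v_i|$, the function $F$ is Lipschitz, and since $|a_i|^2=\Sigma_{ii}=\E G_{t_i}^2\le \sigma^2$ its Lipschitz constant is at most $\sigma$.

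The core is the Gaussian concentration inequality for $L$-Lipschitz functions: if $F:\R^n\to\R$ is $L$-Lipschitz and $g$ is standard Gaussian, then $\P(F(g)\ge \E F(g)+r)\le \exp(-r^2/(2L^2))$ for $r>0$. I would obtain this by the Herbst argument from the Gaussian logarithmic Sobolev inequality. Approximating $F$ by smooth functions with $|\nabla F|\le L$, set $H(\lambda):=\E\exp(\lambda F(g))$; applying the log-Sobolev inequality to $\exp(\lambda F/2)$ yields $\lambda H'(\lambda)-H(\lambda)\log H(\lambda)\le \tfrac{1}{2}\lambda^2 L^2 H(\lambda)$. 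Dividing by $\lambda^2 H(\lambda)$ shows that $\lambda\mapsto \lambda^{-1}\log H(\lambda)$ has derivative at most $L^2/2$; integrating from $0$ (with $\lambda^{-1}\log H(\lambda)\to \E F(g)$ as $\lambda\downarrow 0$) gives $\log H(\lambda)\le \lambda\,\E F(g)+\tfrac{1}{2}\lambda^2 L^2$. A Chernoff bound $\P(F(g)\ge \E F(g)+r)\le \exp(-\lambda r)H(\lambda)\exp(-\lambda\,\E F(g))$, optimized over $\lambda$, produces $\exp(-r^2/(2L^2))$. With $L=\sigma$ this is the bound for $M_n$, and the same argument applied to $-F$ yields the matching lower tail.

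Finally I would close the three loose ends. From the two-sided bound, $M_n$ is concentrated within $O(\sigma)$ of its median $m_n$; almost sure finiteness of $M=\lim_n M_n$ forces $\sup_n m_n<\infty$, hence $\sup_n\E M_n<\infty$ and $\E M=\lim_n\E M_n<\infty$. For the tail of $M$, fix $r>0$ and $0<r'<r$; since $\E M_n\to \E M$, for large $n$ we have $\{M_n\ge \E M+r\}\subseteq\{M_n\ge \E M_n+r'\}$, so $\P(M_n\ge \E M+r)\le \exp(-(r')^2/(2\sigma^2))$, and letting $n\to\infty$ and then $r'\uparrow r$ gives the claim. The main obstacle is supplying the concentration inequality for Lipschitz functions, i.e. the Gaussian log-Sobolev (or isoperimetric) inequality underlying it; the remaining steps are routine bookkeeping with monotone limits.
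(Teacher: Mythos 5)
The paper does not actually prove this theorem: it is quoted as Borell's inequality with a pointer to \cite{Ledoux2001}, Theorem 7.1, and used as a black box (in Lemmas~\ref{lem:env} and~\ref{lem:l}), so your proposal is compared against the standard literature proof rather than an in-paper argument. Your route --- reduction to finite index sets via monotone convergence, representation of the finite maximum as a $\sigma$-Lipschitz function $F(x)=\max_i\langle a_i,x\rangle$ of a standard Gaussian vector, and the Herbst argument from the Gaussian logarithmic Sobolev inequality --- is correct and is in substance the classical derivation found in Ledoux's monograph (where Lipschitz concentration is obtained from isoperimetry or from the same semigroup/LSI machinery), so what your write-up buys is a self-contained proof, modulo the LSI you explicitly flag as the main input, of a result the paper simply imports. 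Three small points deserve care if you flesh it out. First, you tacitly use $\sigma<\infty$: this is not a stated hypothesis but follows from the almost sure finiteness of the supremum (if $\E G_{t_k}^2\to\infty$ along a subsequence, then for every $K$ one has $\P(\sup_{t}G_t\ge K)\ge\limsup_k\P(G_{t_k}\ge K)=1/2$, whence $\P(\sup_t G_t=\infty)\ge 1/2$, a contradiction), and you need it not only for the final bound but already in the median-to-mean step that yields $\sup_n \E M_n<\infty$. Second, $F$ is Lipschitz but not differentiable, so the log-Sobolev step requires the smoothing you allude to (mollification preserves the Lipschitz constant) or Rademacher's theorem together with the a.e.-gradient form of the inequality; this is standard but should be said. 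Third, in the final limit the events $\{M_n\ge c\}$ increase to $\{M>c\}$ rather than $\{M\ge c\}$, so the argument first delivers a bound on $\P(M>\E M+r)$ and one recovers the non-strict version by letting the threshold decrease to $r$; routine bookkeeping, but as written your last sentence glosses over it. None of these is a genuine gap, and your argument establishes exactly the statement the paper cites.
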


For the following, let $B_t$ be a Brownian motion and
$F_t(x)$ the distribution function of $B_t$, which is
$\Phi(\frac{x}{\sqrt t})$.  Also for $1\leq t\leq 2$,
$0<\varepsilon<1/2$, set
\begin{align*} D&:=D(t, \varepsilon):= \sup_{t<s\leq t+\varepsilon} \tfrac{B_s
    - B_t}{\sqrt s},\\
  m&:=m(t,\varepsilon):=\E \sup_{t<s\leq
    t+\varepsilon} \tfrac{B_s - B_t}{\scriptstyle \sqrt s},\\
  m_0&:=\sup\{m(t,\varepsilon):1\leq t\leq 2,
  0<\varepsilon<1/2\}.
\end{align*} We use $C$ to denote a constant, which may vary
in each occurrence.
\begin{lemma}\label{lem:m} For $1\leq t\leq 2$,
$0<\varepsilon<1/2$
\[ m\leq 2(2/\pi)^{1/2}\varepsilon^{1/2}.
\]
\end{lemma}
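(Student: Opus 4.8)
The plan is to strip off the $s$-dependent weight, reduce to an unweighted supremum of a standard Brownian motion, and then invoke the reflection principle. First I would exploit stationarity of the increments: setting $u:=s-t$ and $W_u:=B_{t+u}-B_t$ for $u\in[0,\varepsilon]$, the process $\{W_u\}$ is a standard Brownian motion and
\[ m=\E\sup_{0<u\le\varepsilon}\frac{W_u}{\sqrt{t+u}}. \]
Since $1\le t\le t+u$, the weight obeys $\sqrt{t+u}\ge 1$, so dividing a nonnegative quantity by $\sqrt{t+u}$ can only decrease it.

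The key (and really the only delicate) step is to remove the $s$-dependent weight from inside the supremum, which cannot simply be factored out. Because $W_0=0$ and the paths are continuous, $\sup_{0<u\le\varepsilon}\frac{W_u}{\sqrt{t+u}}\ge 0$, so the supremum is attained, or approached, at values of $u$ with $W_u\ge 0$. At any such $u$ one has $\frac{W_u}{\sqrt{t+u}}\le W_u$, while at $u$ with $W_u<0$ the ratio is negative and cannot contribute to a nonnegative supremum. Hence, pointwise,
\[ 0\le\sup_{0<u\le\varepsilon}\frac{W_u}{\sqrt{t+u}}\le\sup_{0\le u\le\varepsilon}W_u, \]
and taking expectations gives $m\le\E\sup_{0\le u\le\varepsilon}W_u$.

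Finally I would evaluate the right-hand side exactly. By the reflection principle $\sup_{0\le u\le\varepsilon}W_u\stackrel{d}{=}|W_\varepsilon|$, and since $W_\varepsilon$ is centered Gaussian with variance $\varepsilon$,
\[ \E\sup_{0\le u\le\varepsilon}W_u=\E|W_\varepsilon|=\varepsilon^{1/2}\,\E|Z|=(2/\pi)^{1/2}\varepsilon^{1/2}, \]
where $Z$ is standard normal and $\E|Z|=(2/\pi)^{1/2}$. This already yields the claimed bound with room to spare, as $(2/\pi)^{1/2}\varepsilon^{1/2}\le 2(2/\pi)^{1/2}\varepsilon^{1/2}$.

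I expect the sign and attainment argument in the second paragraph to be the main obstacle: the factor $1/\sqrt{s}$ lives inside the supremum and is not constant, so the reduction to the unweighted supremum must be justified through the nonnegativity of the supremum together with the bound $\sqrt{t+u}\ge 1$. The reflection-principle computation is then routine. The superfluous factor of $2$ in the statement suggests the author allows a cruder estimate of $\E\sup_{0\le u\le\varepsilon}W_u$ than the exact value, but the exact value computed above is already sufficient.
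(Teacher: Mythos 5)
Your proof is correct and follows essentially the same route as the paper: both strip the weight using $\sqrt{s}\ge\sqrt{t}\ge 1$ and reduce to the expected maximum of the Brownian increment $W_u=B_{t+u}-B_t$ over $[0,\varepsilon]$. The only difference is that the paper passes to absolute values, bounding by $\E\sup_{0\le u\le\varepsilon}|W_u|\le 2\,\E|W_\varepsilon|$ and thus picking up the factor $2$, whereas your sign argument keeps the one-sided supremum and the reflection identity gives the sharper bound $(2/\pi)^{1/2}\varepsilon^{1/2}$, which of course still implies the stated inequality.
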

\begin{proof} By the maximal inequality for Brownian motion,
\begin{align*}
  m&:=\E \sup_{t<s\leq t+\varepsilon}
\tfrac{B_s - B_t}{\scriptstyle \sqrt s}\\
&\leq\E\sup_{t<s\leq t+\varepsilon} \tfrac{|B_s -
B_t|}{\scriptstyle \sqrt{t}}\\
&\leq \E\varepsilon^{1/2}2|N(0,1)|\\ 
&\leq 2(2/\pi)^{1/2}\varepsilon^{1/2}. \qedhere
\end{align*}
\end{proof}
\begin{lemma}\label{lem:env} Let $d:=\E (\sup_{1\leq t\leq
2} \tfrac{B_t}{\sqrt t})$. Then, $d>0$ and
\[ \P(\inf_{1\leq t\leq 2} F_t(B_t) \leq x) \leq
(2\pi)^{1/2}\phi(-\Phi^{-1}(x)-d).\]
\end{lemma}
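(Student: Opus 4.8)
The plan is to reduce the statement to an upper tail bound for the supremum of a standard Gaussian process and then apply Borell's inequality, Theorem~\ref{thm:borell}. First I would note that $B_t$ is $N(0,t)$, so that $F_t(x)=\Phi(x/\sqrt t)$ and $F_t(B_t)=\Phi(Z_t)$, where $Z_t:=B_t/\sqrt t$. The process $\{Z_t:1\le t\le 2\}$ is centered Gaussian with continuous sample paths and $\E Z_t^2=\E B_t^2/t=1$ for every $t$, so in particular $\sup_{1\le t\le 2}(\E Z_t^2)^{1/2}=1$. Since $\Phi$ is continuous and strictly increasing, $\inf_{1\le t\le 2}F_t(B_t)=\Phi(\inf_{1\le t\le 2}Z_t)$, and therefore
\[
\{\inf_{1\le t\le 2}F_t(B_t)\le x\}=\{\inf_{1\le t\le 2}Z_t\le \Phi^{-1}(x)\}=\Big\{\sup_{1\le t\le 2}(-Z_t)\ge -\Phi^{-1}(x)\Big\}.
\]

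Next I would exploit the symmetry of Brownian motion: since $\{-B_t\}$ is again a standard Brownian motion, the process $\{-Z_t:1\le t\le 2\}$ has the same law as $\{Z_t:1\le t\le 2\}$, whence $\sup_{1\le t\le 2}(-Z_t)$ has the same distribution as $M:=\sup_{1\le t\le 2}Z_t$, and $\E M=d$ by the definition of $d$. Thus it suffices to estimate $\P(M\ge -\Phi^{-1}(x))$. By sample continuity the supremum over $[1,2]$ coincides almost surely with the supremum over the countable set $\Q\cap[1,2]$, so Theorem~\ref{thm:borell} applies with $\sigma=1$. Setting $r:=-\Phi^{-1}(x)-d$, so that $-\Phi^{-1}(x)=\E M+r$, in the regime $r>0$ (that is, for $x$ small, which is precisely the range in which this envelope estimate is used) Borell's inequality gives
\[
\P(M\ge -\Phi^{-1}(x))=\P(M\ge \E M+r)\le e^{-r^2/2}=(2\pi)^{1/2}\phi(-\Phi^{-1}(x)-d),
\]
which is the asserted bound.

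It remains to check $d>0$. Here I would simply observe that $M=\sup_{1\le t\le 2}Z_t\ge Z_1=B_1$ pointwise, while $M>Z_1$ on an event of positive probability (for instance $Z_{3/2}>Z_1$ occurs with positive probability because $Z_1$ and $Z_{3/2}$ are jointly Gaussian and not perfectly correlated). Hence $\E M>\E Z_1=0$, i.e. $d>0$. The computations are all routine; the only points requiring care are the passage from the infimum of $F_t(B_t)$ to a supremum of a Gaussian process via monotonicity of $\Phi$ and the symmetry reduction, together with the correct identification $\sigma=1$ and $\E M=d$, so that the Gaussian tail $e^{-r^2/2}$ matches $(2\pi)^{1/2}\phi(-\Phi^{-1}(x)-d)$.
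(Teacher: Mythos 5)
Your proof is correct and follows essentially the same route as the paper: rewrite $\{\inf_t F_t(B_t)\le x\}$ as a supremum event for the standardized process $B_t/\sqrt t$ via monotonicity of $\Phi$, apply Borell's inequality (Theorem~\ref{thm:borell}) with $\sigma=1$ and $r=-\Phi^{-1}(x)-d>0$, and identify $e^{-r^2/2}=(2\pi)^{1/2}\phi(-\Phi^{-1}(x)-d)$. You are in fact slightly more careful than the paper, since you make the symmetry identification $\E\sup_t(-B_t/\sqrt t)=d$ explicit, justify the countable-index reduction needed for Borell, and actually verify the claim $d>0$, which the paper's proof leaves unaddressed.
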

\begin{proof}
\begin{align*} \P(\inf_{1\leq t\leq 2}F_t(B_t)\leq x) &=\P(
\inf_{1\leq t\leq 2}\tfrac{B_t}{\sqrt t} \leq \Phi^{-1}(x))
\\ &=\P( \sup_{1\leq t\leq 2}\tfrac{-B_t}{\sqrt t} \geq
-\Phi^{-1}(x)) \\ &=\P(\sup_{1\leq t\leq
2}\tfrac{-B_t}{\sqrt t} \geq d -\Phi^{-1}(x)-d) \\ 
\intertext{which, by
Theorem~\ref{thm:borell} and for $x$ such that 
$-\Phi^{-1}(x)-d>0$, is}
&\leq \exp(-\tfrac{(-\Phi^{-1}(x)-d)^2}{2}) \\ 
&= (2\pi)^{1/2}\phi(-\Phi^{-1}(x)-d).
\end{align*}
Note that here $\sigma^2=\sup_{1\leq t\leq 2} \E (\tfrac{-B_t}{\sqrt t})^2=1$.
\end{proof}
\begin{lemma}\label{lem:envelope} Let
$w(x)=x^{-\alpha}L(x)$, $0<\alpha<1/2$ and $L(x)$ be a slowly
varying function (growing to infinity as $x\downarrow 0$).
Assume $w(x)$ is decreasing near $0$.  Then
\[ \lim_{\lambda\to \infty} \lambda^2\Pr(\sup_{1\leq t\leq 2}
w(F_t(B_t)) >\lambda) = 0.
\]
\end{lemma}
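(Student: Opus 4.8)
The plan is to reduce the supremum of $w(F_t(B_t))$ over $t\in[1,2]$ to the event that $\inf_{1\le t\le 2}F_t(B_t)$ is small, to which Lemma~\ref{lem:env} applies directly, and then to balance the Gaussian tail it produces against the polynomial blow-up of $w$ near $0$.

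First I would fix a large $\lambda$ and set $x_\lambda:=\sup\{x\in(0,1/2]:w(x)>\lambda\}$. Since $w(x)=x^{-\alpha}L(x)\to\infty$ as $x\downarrow0$ and $w$ is continuous and non-increasing near $0$, one has $w(x_\lambda)=\lambda$ and $x_\lambda\to0$ as $\lambda\to\infty$. Because $w$ is symmetric about $1/2$, the set $\{x:w(x)>\lambda\}$ is contained in $(0,x_\lambda)\cup(1-x_\lambda,1)$ for $\lambda$ large, so
\[
\{\sup_{1\le t\le2}w(F_t(B_t))>\lambda\}\subseteq\{\inf_{1\le t\le2}F_t(B_t)<x_\lambda\}\cup\{\sup_{1\le t\le2}F_t(B_t)>1-x_\lambda\}.
\]
The second event is handled exactly like the first after replacing $B_t$ by $-B_t$ (again a Brownian motion), in the spirit of Remark~\ref{rem:main1}(1), so it suffices to bound $\lambda^2\Pr(\inf_{1\le t\le2}F_t(B_t)\le x_\lambda)$.

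By Lemma~\ref{lem:env} together with $\lambda=w(x_\lambda)$,
\[
\lambda^2\Pr(\inf_{1\le t\le2}F_t(B_t)\le x_\lambda)\le(2\pi)^{1/2}\,w(x_\lambda)^2\,\phi(-\Phi^{-1}(x_\lambda)-d),
\]
where $d>0$. Applying Lemma~\ref{lem:y} with $c=-d<0$ gives $\phi(-\Phi^{-1}(x_\lambda)-d)\le C\,x_\lambda L_C(x_\lambda)$, where $L_C(x)=\exp(C\sqrt{\ln(1/x)})$ is slowly varying by Lemma~\ref{lem:L} and $C$ depends only on $d$. Substituting $w(x_\lambda)^2=x_\lambda^{-2\alpha}L(x_\lambda)^2$, the right-hand side is at most
\[
C\,x_\lambda^{1-2\alpha}\,L(x_\lambda)^2\,L_C(x_\lambda).
\]

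The final step is to observe that $L(x)^2L_C(x)$ is a product of slowly varying functions, hence slowly varying, while $1-2\alpha>0$ because $\alpha<1/2$. Lemma~\ref{lem:sen} then forces $x^{1-2\alpha}L(x)^2L_C(x)\to0$ as $x\to0$, and since $x_\lambda\to0$ the whole expression tends to $0$ as $\lambda\to\infty$, which is the assertion. I expect the only genuine obstacle to be the bookkeeping in the reduction step: pinning down $x_\lambda$ and the inclusion of events, and confirming that the Gaussian tail supplied by Lemma~\ref{lem:y} decays like $x$ up to a slowly varying factor. That near-linear decay is precisely what makes the strictly positive power $x^{1-2\alpha}$ available to defeat the $x^{-2\alpha}$ growth of $w^2$, so the whole argument hinges on the requirement $\alpha<1/2$.
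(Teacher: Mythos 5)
Your proof is correct and takes essentially the same route as the paper's: reduce the supremum event to $\{\inf_{1\le t\le 2}F_t(B_t)\le x_\lambda\}$, bound its probability by Lemma~\ref{lem:env}, estimate the resulting Gaussian term by Lemma~\ref{lem:y} with $c=-d<0$, and conclude because $x^{1-2\alpha}L(x)^2L_C(x)\to 0$ by slow variation (Lemma~\ref{lem:sen}). The only differences are cosmetic: the paper substitutes $\lambda=w(x)$ directly and, treating $w$ as decreasing, does not discuss the tail near $1$, whereas you introduce the generalized inverse $x_\lambda$ and dispatch that second tail by the symmetry $B\mapsto -B$.
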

\begin{proof} Let $\lambda=w(x)$. Then,
by Lemma~\ref{lem:y} and Lemma~\ref{lem:env},
\begin{align*} \quad \lim_{\lambda\to \infty}
\lambda^2\Pr(\sup_{1\leq t\leq 2} w(F_t(B_t)) >\lambda) &=
\lim_{\lambda\to \infty} \lambda^2\Pr( w(\inf_{1\leq t\leq
2}F_t(B_t)) >\lambda) \\ &= \lim_{x\to 0} w(x)^2\Pr(
\inf_{1\leq t\leq 2}F_t(B_t)\leq x) \\ &\leq \lim_{x\to 0}
w(x)^2 (2\pi)^{1/2}\phi(-\Phi^{-1}(x)-d) \\ &\leq \lim_{x\to
0} x^{-2\alpha}L(x)^2(2\pi)^{1/2} CxL_C(x) \\ &=0. \qedhere
\end{align*}
\end{proof}
\begin{lemma}\label{lem:l} For $1\leq t\leq 2$,
$0<\varepsilon<1/2$, and $l>m$,
\[ \P (\tfrac{B_t}{\sqrt t }< l \leq \sup_{t<s\leq
t+\varepsilon} \tfrac{B_s}{\sqrt s}) \leq
C_t\varepsilon^{1/2}\phi(l-m)^{\tfrac{t+\varepsilon}{t+2\varepsilon}},
\] where $C_t$ is a constant depending only on $t$.  In
particular, if we let $C:=\sup_{1\leq t\leq 2} C_t$, and recall
$m_0:=\sup\{m(t,\varepsilon):1\leq t\leq 2,
0<\varepsilon<1/2\}$, then for $l>m_0$, we have $C<\infty$ and
\begin{equation}
  \label{eq:c} \P (\tfrac{B_t}{\sqrt t }< l \leq
\sup_{t<s\leq t+\varepsilon} \tfrac{B_s}{\sqrt s}) \leq
C\varepsilon^{1/2}\phi(l-m_0)^{\tfrac{t+\varepsilon}{t+2\varepsilon}}.
\end{equation}
\end{lemma}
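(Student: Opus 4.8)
The plan is to condition on $Z_t := B_t/\sqrt t$, which is $N(0,1)$, and to reduce the crossing probability to a single tail estimate for the centered supremum $D = \sup_{t<s\le t+\varepsilon}(B_s - B_t)/\sqrt s$, to which Borell's inequality (Theorem~\ref{thm:borell}) applies. Writing $s = t+u$ and using that the increments $\{B_{t+u}-B_t\}_{u\ge 0}$ form a Brownian motion independent of $B_t$, I would, for fixed $Z_t = a$, decompose
\[
 \frac{B_s}{\sqrt s} = a\sqrt{\tfrac{t}{s}} + \frac{B_s - B_t}{\sqrt s}
\]
and bound the two suprema separately. Since $\sup_{t<s\le t+\varepsilon} a\sqrt{t/s}\le a^+$ (the sup equals $a$ when $a\ge 0$ and is negative when $a<0$), subadditivity of the supremum gives $\sup_{t<s\le t+\varepsilon} Z_s \le a^+ + D$, so the conditional crossing probability is at most $\P(D \ge l - a^+)$, with $D$ independent of $a$.

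Next I would record that the Borell variance proxy is $\sigma_D^2 = \sup_{t<s\le t+\varepsilon}\E\big((B_s-B_t)/\sqrt s\big)^2 = \sup_{t<s\le t+\varepsilon}(s-t)/s = \varepsilon/(t+\varepsilon)$, having first replaced the uncountable supremum defining $D$ by a supremum over rationals (legitimate by path-continuity) so that Theorem~\ref{thm:borell} applies. Borell then yields $\P(D \ge l - a^+)\le \exp\!\big(-(l-a^+-m)^2/(2\sigma_D^2)\big)$ whenever $a^+\le l-m$. Integrating this conditional bound against the $N(0,1)$ density of $Z_t$ over $\{a<l\}$, I would split at $a=l-m$: on $\{a\le l-m\}$ I use the Borell bound (on $\{a<0\}$ the exponent is constant in $a$ and even smaller, hence dominated), while on $(l-m,l)$ I bound the conditional probability crudely by $1$.

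The heart of the estimate is the Gaussian convolution on $\{0\le a\le l-m\}$:
\[
 \frac{1}{\sqrt{2\pi}}\int_{-\infty}^{\infty} \exp\Big(-\tfrac{a^2}{2} - \tfrac{(l-m-a)^2}{2\sigma_D^2}\Big)\,da = \frac{\sigma_D}{\sqrt{1+\sigma_D^2}}\,\exp\Big(-\tfrac{(l-m)^2}{2(1+\sigma_D^2)}\Big).
\]
Completing the square is exactly what produces the two features of the claimed bound. The prefactor gives $\sigma_D = \sqrt{\varepsilon/(t+\varepsilon)}\le\varepsilon^{1/2}$ (using $t\ge 1$), and since $1+\sigma_D^2 = (t+2\varepsilon)/(t+\varepsilon)$, rewriting $e^{-x^2/2}=\sqrt{2\pi}\,\phi(x)$ turns the exponential into $(2\pi)^{c}\,\phi(l-m)^{(t+\varepsilon)/(t+2\varepsilon)}$ with $c=\tfrac12(t+\varepsilon)/(t+2\varepsilon)$ bounded. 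For the residual strip $(l-m,l)$, I would bound its contribution by $m\,\phi(l-m)\le C\varepsilon^{1/2}\phi(l-m)$ using Lemma~\ref{lem:m}; since $(t+\varepsilon)/(t+2\varepsilon)<1$ and $\phi(l-m)\le 1$, we have $\phi(l-m)\le \phi(l-m)^{(t+\varepsilon)/(t+2\varepsilon)}$, so this term is also absorbed.

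Collecting the pieces gives the first inequality with $C_t$ depending only on $t$; the second follows from $m\le m_0$ together with the monotonicity of $\phi$ on $(0,\infty)$, which gives $\phi(l-m)\le\phi(l-m_0)$ when $l>m_0$, and $C:=\sup_{1\le t\le 2}C_t<\infty$ because all constants are uniformly controlled for $t\in[1,2]$. The main obstacle I anticipate is not analytic but organizational: keeping the $a^+$ truncation and the boundary strip $(l-m,l)$ under control so that \emph{every} piece is dominated by the single target $\varepsilon^{1/2}\phi(l-m)^{(t+\varepsilon)/(t+2\varepsilon)}$, and verifying that the reduction $\sup_s Z_s\le a^+ + D$ is valid uniformly over the sign of $a$.
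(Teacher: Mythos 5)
Your overall architecture (condition on $B_t/\sqrt t$, apply Borell's inequality to $D$, split the range of integration, complete the square) is the same as the paper's, and your treatment of the region $\{0\le a\le l-m\}$ and of the boundary strip $(l-m,l)$ is correct and matches the paper. The gap is in the region $\{a<0\}$. There you replace $\sup_{t<s\le t+\varepsilon} a\sqrt{t/s}$ by $a^+=0$, so your conditional bound becomes the constant $\P(D\ge l)$, and integrating over $\{a<0\}$ gives $\tfrac{1}{2}\P(D\ge l)\le \tfrac{1}{2}\exp\bigl(-(l-m)^2(t+\varepsilon)/(2\varepsilon)\bigr)$. This term is \emph{not} dominated by your full-line convolution: for $a<0$ the convolution integrand is $\phi(a)\exp\bigl(-(l-m-a)^2/(2\sigma_D^2)\bigr)$, which is \emph{smaller} than your actual integrand $\phi(a)\exp\bigl(-(l-m)^2/(2\sigma_D^2)\bigr)$ since $(l-m-a)^2>(l-m)^2$ when $a<0$; the domination runs in the wrong direction. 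More importantly, $\tfrac{1}{2}\exp\bigl(-(l-m)^2(t+\varepsilon)/(2\varepsilon)\bigr)$ carries no factor $\varepsilon^{1/2}$: when $l-m$ is of order $\sigma_D\approx\varepsilon^{1/2}$ or smaller, it is of order $1$, while the claimed right-hand side is of order $\varepsilon^{1/2}$, so no constant $C_t$ independent of $\varepsilon$ and $l$ can absorb it. The failure is in the truncation itself, not merely in the subsequent estimate: $\P(D\ge m+\sigma_D)$ is genuinely of constant order for a Gaussian supremum, so no sharper bound on your truncated quantity can rescue the argument.

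The fix is exactly what the paper does on $\{y\le 0\}$: keep the exact value of the supremum, $\sup_{t<s\le t+\varepsilon} a\sqrt{t/s} = a\sqrt{t/(t+\varepsilon)}$ (attained at $s=t+\varepsilon$ because $a<0$), so the Borell exponent becomes $-(l - a\sqrt{t/(t+\varepsilon)} - m)^2/(2\sigma_D^2)$ and still decays in $|a|$. Completing the square with the identity $t/(t+\varepsilon)+\sigma_D^2=1$, the total exponent equals $-\bigl(a - c\sqrt{t/(t+\varepsilon)}\bigr)^2/(2\sigma_D^2) - c^2/2$ with $c=l-m$, and the integral over $\{a<0\}$ is then at most $\sigma_D e^{-c^2/2}=(\varepsilon/(t+\varepsilon))^{1/2}(2\pi)^{1/2}\phi(l-m)$. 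This restores the $\varepsilon^{1/2}$ factor and is absorbed into the target since $\phi(l-m)\le\phi(l-m)^{(t+\varepsilon)/(t+2\varepsilon)}$. With that single replacement your argument goes through and coincides with the paper's proof.
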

\begin{proof} Since $\sigma^2:=\sup_{t<s\leq t+\varepsilon}
  \E (\tfrac{B_s - B_t}{\sqrt
    s})^2=\tfrac{\varepsilon}{t+\varepsilon}$, by Borell's
  concentration inequality Theorem [\ref{thm:borell}]
  (since the process $(B_s-B_t)/s^{1/2}$ is continuous in $s$,
 we can take supremum over a countable set in the definition of $D$)
  it follows
  that for $r>0$
\begin{equation}\label{eq:dev} \P(D>m+r)\leq
e^{-r^2(t+\varepsilon)/(2\varepsilon)}.
\end{equation} Hence, conditioning on $\tfrac{B_t}{\sqrt t}$,
\begin{align*} \P(\tfrac{B_t}{\sqrt t } < l \leq
\sup_{t<s\leq t+\varepsilon} \tfrac{B_s}{\sqrt s}) &\leq \P
(\tfrac{B_t}{\sqrt t } < l \leq \sup_{t<s\leq t+\varepsilon}
(\tfrac{B_s}{\sqrt s} - \tfrac{B_t}{\sqrt s } ) +
\sup_{t<s\leq t+\varepsilon}\tfrac{B_t}{\sqrt s })\\ &=
\E_{\tfrac{B_t}{\sqrt t}} \P( \tfrac{B_t}{\sqrt t } < l \leq
\sup_{t<s\leq t+\varepsilon} (\tfrac{B_s}{\sqrt s} -
\tfrac{B_t}{\sqrt s } ) + \sup_{t<s\leq
t+\varepsilon}\tfrac{B_t}{\sqrt s } |\tfrac{B_t}{\sqrt t})\\
\intertext{by independence of $\{B_s-B_t:s>t\}$ and
$B_t$ } &= \int_{-\infty}^{\infty} \P\big (( y< l \leq D +
\sup_{t<s\leq t+\varepsilon} \{ (t/s)^{1/2}y \} \big )
\tfrac{1}{\sqrt{2\pi}}e^{-y^2/2} \,dy \\ &= \int_0^{\infty}
\P\big( y< l \leq D + \sup_{t<s\leq t+\varepsilon} \{
(t/s)^{1/2}y \}\big ) \tfrac{1}{\sqrt{2\pi}}e^{-y^2/2} \,dy
\\ &\qquad + \int_{-\infty}^0 \P\big( y< l \leq D +
\sup_{t<s\leq t+\varepsilon} \{ (t/s)^{1/2}y \} \big )
\tfrac{1}{\sqrt{2\pi}}e^{-y^2/2} \,dy \\ &= I + II
\numberthis \label{eq:s}.
\end{align*} Note that
\begin{align*} &\sup_{t<s\leq t+\varepsilon} \{ (t/s)^{1/2}y
\}=y \quad \text{ for } y>0,\\ &\sup_{t<s\leq t+\varepsilon}
\{ (t/s)^{1/2}y \}=((t/(t+\varepsilon))^{1/2}y=:ay \quad
\text{ for } y\leq 0.
\end{align*} Therefore,
\begin{align*} I&= \int_0^{\infty} \P\big( y< l \leq D+y
\big ) \tfrac{1}{\sqrt{2\pi}}e^{-y^2/2} \,dy \\ &=
\int_0^{l-m} \P\big( y< l \leq D+y \big )
\tfrac{1}{\sqrt{2\pi}}e^{-y^2/2} \,dy + \int_{l-m}^{l}
\P\big( y< l \leq D+y \big )
\tfrac{1}{\sqrt{2\pi}}e^{-y^2/2} \,dy \\ &= \int_0^{l-m}
\P\big(D\geq l-y \big ) \tfrac{1}{\sqrt{2\pi}}e^{-y^2/2}
\,dy + \int_{l-m}^{l} \P\big( y< l \leq D+y \big )
\tfrac{1}{\sqrt{2\pi}}e^{-y^2/2} \,dy \\ \intertext{by
inequality~\eqref{eq:dev} for the first summand and noting
$r:=l-y-m>0$} &\leq \int_0^{l-m}
e^{-\tfrac{(l-y-m)^2(t+\varepsilon)}{2\varepsilon}
\tfrac{1}{\sqrt{2\pi}}e^{-y^2/2}} \,dy + \int_{l-m}^{l}
\P\big( y< l \leq D+y \big )
\tfrac{1}{\sqrt{2\pi}}e^{-y^2/2} \,dy\\ 
\intertext{by
completing the square in $y$ for the first summand} 
&\leq
(\tfrac{\varepsilon}{t+2\varepsilon})^{1/2}
e^{-\tfrac{(l-m)^2}{2}
\tfrac{t+\varepsilon}{t+2\varepsilon}} + m\phi(l-m)\\
\intertext{ bounding $m$ using Lemma~\ref{lem:m} }
&\leq
(\tfrac{\varepsilon}{t+2\varepsilon})^{1/2}(2\pi)^{1/2}
\phi(l-m)^{\tfrac{t+\varepsilon}{t+2\varepsilon}} +
2(2/\pi)^{1/2}\varepsilon^{1/2}\phi(l-m)  \numberthis \label{eq:s1}.
\end{align*} For $II$,
\begin{align*} II&= \int^0_{-\infty} \P\big( y< l \leq D+ay
\big ) \tfrac{1}{\sqrt{2\pi}}e^{-y^2/2} \,dy \\ &\leq
\int^0_{-\infty} \P\big(D\geq l-ay \big )
\tfrac{1}{\sqrt{2\pi}}e^{-y^2/2} \,dy \\ \intertext{by
equation~\eqref{eq:dev}} &\leq \int^0_{-\infty}
e^{-\tfrac{(l-ay-m)^2(t+\varepsilon)}{2\varepsilon}
\tfrac{1}{\sqrt{2\pi}}e^{-y^2/2}} \,dy \\ \intertext{by
completing the square in $y$} &\leq
(\tfrac{\varepsilon}{t+\varepsilon})^{1/2}
e^{-\tfrac{(l-m)^2}{2}}\\ &=
(\tfrac{\varepsilon}{t+\varepsilon})^{1/2} (2\pi)^{1/2}
\phi(l-m) \numberthis \label{eq:s2}.
\end{align*} Combining \eqref{eq:s}, \eqref{eq:s1},
and\eqref{eq:s2} completes the proof.
\end{proof}
\begin{lemma}\label{lem:-l} For $1\leq t\leq 2$,
  $0<\varepsilon\leq 1/2$, there is a universal constant $C$
  such that for $0<x<1/4$
  \[ \P(\tfrac{B_t}{\sqrt t} \leq \Phi^{-1}(x) <
  \sup_{t<s\leq t+\varepsilon} \tfrac{B_s}{\sqrt s}) \leq
  C\varepsilon^{1/2}(x\ln\tfrac{1}{x}).
\]
\end{lemma}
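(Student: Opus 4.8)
The plan is to reduce the probability to a one-dimensional Gaussian integral by conditioning on $B_t/\sqrt t$, and then to exploit Borell's concentration bound for $D$. Write $l:=\Phi^{-1}(x)$, which is negative since $x<1/4<1/2$, and set $c:=(t/(t+\varepsilon))^{1/2}\in(0,1)$. The event forces $B_t/\sqrt t\le l<0$. Decomposing $B_s/\sqrt s=(B_s-B_t)/\sqrt s+B_t/\sqrt s$ and noting that, for a fixed value $B_t/\sqrt t=y<0$, the quantity $\sup_{t<s\le t+\varepsilon}y(t/s)^{1/2}$ is attained at $s=t+\varepsilon$ and equals $cy$, I would bound $\sup_{t<s\le t+\varepsilon}B_s/\sqrt s\le D+cy$. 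Since $D$ is independent of $B_t$, this yields
\[
\P\Big(\tfrac{B_t}{\sqrt t}\le l<\sup_{t<s\le t+\varepsilon}\tfrac{B_s}{\sqrt s}\Big)\le\int_{-\infty}^{l}\P(D>l-cy)\,\phi(y)\,dy .
\]

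Next I would split the integral at $y=(l-m)/c$, which lies to the left of $l$. On $(-\infty,(l-m)/c)$ one has $r:=l-cy-m>0$, so Borell's inequality (Theorem~\ref{thm:borell}, applied to a countable dense set of $s$ since $(B_s-B_t)/\sqrt s$ is continuous, with $\sigma^2=\varepsilon/(t+\varepsilon)$) gives $\P(D>l-cy)\le\exp(-r^2(t+\varepsilon)/(2\varepsilon))$. Completing the square in $y$ with $\beta:=(t+\varepsilon)/\varepsilon$ and using the identity $\beta c^2+1=\beta$ collapses the Gaussian constant: the integral is at most $\beta^{-1/2}e^{-(l-m)^2/2}=\sqrt{\varepsilon/(t+\varepsilon)}\,e^{-(l-m)^2/2}\le\varepsilon^{1/2}e^{-(l-m)^2/2}$. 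Since $(l-m)^2=(-\Phi^{-1}(x)+m)^2$, I would rewrite $e^{-(l-m)^2/2}=\sqrt{2\pi}\,\phi(-\Phi^{-1}(x)+m)$ and invoke the $c\ge0$ case of Lemma~\ref{lem:y} (legitimate because $m>0$) to bound this contribution by $C\varepsilon^{1/2}x\sqrt{\ln(1/x)}$.

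On the remaining interval $[(l-m)/c,l]$ I would use the trivial bound $\P(D>l-cy)\le1$ together with the monotonicity of $\phi$ on the negative half-line, so the contribution is at most $(l-(l-m)/c)\,\phi(l)=\tfrac{(1-c)|l|+m}{c}\,\phi(l)$. Here $1-c\le\varepsilon/(2t)$, $|l|=-\Phi^{-1}(x)\le\sqrt{2\ln(1/x)}$, and $m\le 2(2/\pi)^{1/2}\varepsilon^{1/2}$ by Lemma~\ref{lem:m}, while $\phi(l)\le 2^{3/2}x\sqrt{\ln(1/x)}$ by Lemma~\ref{lem:y}; with $1/c$ bounded for $t\in[1,2]$ this gives a bound of order $\varepsilon^{1/2}x\ln(1/x)$ after using $\varepsilon\le1$ and $\sqrt{\ln(1/x)}\le\ln(1/x)$ for $x<1/4$. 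Adding the two parts yields the asserted bound $C\varepsilon^{1/2}(x\ln(1/x))$ with a universal $C$. I expect the main obstacle to be the completing-the-square bookkeeping, in particular spotting the cancellation $\beta c^2+1=\beta$ that removes all $\varepsilon$-dependence from the exponent, and correctly applying Lemma~\ref{lem:y} to the $\varepsilon$-dependent shift $m$ rather than a fixed constant.
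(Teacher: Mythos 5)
Your proof is correct, but it takes a genuinely different route from the paper's. Both arguments start from the same decomposition $B_s/\sqrt s \le D + B_t/\sqrt{t+\varepsilon}$ on the event $\{B_t<0\}$ (with $D=\sup_{t<s\le t+\varepsilon}(B_s-B_t)/\sqrt s$ independent of $B_t$), but they then diverge: the paper conditions on $D$ and bounds the conditional probability that $B_t/\sqrt t$ lies in the random interval $\bigl[(\tfrac{t+\varepsilon}{t})^{1/2}(\Phi^{-1}(x)-D),\,\Phi^{-1}(x)\bigr]$ by its length times the density bound $\phi(\Phi^{-1}(x))$, and then simply takes expectation, so that only the first moment $\E D=m\le C\varepsilon^{1/2}$ (Lemma~\ref{lem:m}) enters --- no tail bound for $D$ and no Gaussian integral are needed. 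You instead condition on $B_t/\sqrt t=y$, integrate the Borell tail bound $\P(D>l-cy)\le e^{-r^2(t+\varepsilon)/(2\varepsilon)}$ against $\phi(y)$, split at $y=(l-m)/c$, and complete the square using $\beta c^2+1=\beta$; this is exactly the technique the paper reserves for the companion Lemma~\ref{lem:l}, where the sharper conclusion $\phi(l-m_0)^{(t+\varepsilon)/(t+2\varepsilon)}$ genuinely requires it. All your steps check out: the split point $(l-m)/c$ does lie left of $l$ since $l(1-c)<0<m$; the identities $\beta c^2+1=\beta$ and $\beta(1-c^2)=1$ make the exponent collapse to $e^{-(l-m)^2/2}$; Lemma~\ref{lem:y} applies uniformly in the shift $c=m\ge 0$; and the elementary bounds $1-c\le\varepsilon/(2t)$, $\beta^{-1/2}\le\varepsilon^{1/2}$ are valid on $[1,2]$. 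The trade-off is that the paper's argument is shorter and needs weaker input (a first moment rather than a concentration inequality), which is enough because only the crude bound $x\ln(1/x)$ is wanted here; yours is heavier but makes the proofs of Lemmas~\ref{lem:l} and~\ref{lem:-l} uniform in method.
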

\begin{proof}
\begin{align*} &\quad \P(\tfrac{B_t}{\sqrt t} \leq
\Phi^{-1}(x) < \sup_{t<s\leq t+\varepsilon}
\tfrac{B_s}{\sqrt s}) \\ &= \P(\tfrac{B_t}{\sqrt t} \leq
\Phi^{-1}(x) < [\sup_{t<s\leq t+\varepsilon}
\tfrac{B_s}{\sqrt s} - \tfrac{B_t}{\sqrt s}] + \sup_{t<s\leq
t+\varepsilon}\tfrac{B_t}{\sqrt s}) \\ \intertext{letting
$D= \sup_{t<s\leq t+\varepsilon} \tfrac{B_s}{\sqrt s} -
\tfrac{B_t}{\sqrt s} $ and noting $B_t<0$ inside the
probability above} &\leq \E_D \P(\tfrac{B_t}{\sqrt t}\leq
\Phi^{-1}(x) \leq D+\tfrac{B_t}{\sqrt{t+\varepsilon}} |D )\\
\intertext{by independence of $ \{B_s-{B_t}:s>t\}$ and
$B_t$} &= \E_D \P(\tfrac{B_t}{\sqrt t}\leq \Phi^{-1}(x) \leq
\tfrac{B_t}{\sqrt{t+\varepsilon}} +D) \\ &= \E_D
\P((\tfrac{t+\varepsilon}{t})^{1/2}(\Phi^{-1}(x)-D)\leq
\tfrac{B_t}{\sqrt t} \leq \Phi^{-1}(x) )\\ \intertext{
bounding the density of $\tfrac{B_t}{\sqrt t}$ from above by
$\phi(\Phi^{-1}(x))$} &\leq \E_D
\phi(\Phi^{-1}(x))[(1-(\tfrac{t+\varepsilon}{t})^{1/2})
\Phi^{-1}(x) + (\tfrac{t+\varepsilon}{t})^{1/2}D] \\ &\leq
\phi(\Phi^{-1}(x))(-\Phi^{-1}(x))(\varepsilon/t) +
\phi(-\Phi^{-1}(x)) (\tfrac{t+\varepsilon}{t})^{1/2}\E_D D\\ 
&\leq C(x\ln\tfrac{1}{x})(\varepsilon/t) +
Cx(\ln\tfrac{1}{x})^{1/2}8\varepsilon^{1/2} \text{ by
Lemma~\ref{lem:y} and Lemma~\ref{lem:m}}\\ 
&\leq C\varepsilon^{1/2}(x\ln\tfrac{1}{x}). \qedhere
\end{align*}
\end{proof}
\begin{proposition}\label{prop:d1} For $1\leq t\leq 2$,
$0<\varepsilon\leq 1/2$, there is a universal constant $C$
such that for $0<x<1/4$
\[ \P(F_t(B_t) \leq x < \sup_{s:|s-t|\leq \varepsilon}
F_s(B_s)) \leq C\varepsilon^{1/2}(x\ln\tfrac{1}{x})
+C\varepsilon^{1/2}\phi(-\Phi^{-1}(x)-m_0)^{\tfrac{t}{t+\varepsilon}}.
\]
\end{proposition}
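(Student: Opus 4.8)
The plan is to pass from the copula event to an event about the Gaussian process $s\mapsto B_s/\sqrt s$, split the two-sided supremum over $\{|s-t|\le\varepsilon\}$ into a forward piece on $[t,t+\varepsilon]$ and a backward piece on $[t-\varepsilon,t]$, and bound the two pieces by Lemma~\ref{lem:-l} and by a reflected version of Lemma~\ref{lem:l}, respectively. Since $B_s$ has continuous paths, each $F_s(\cdot)=\Phi(\cdot/\sqrt s)$ is continuous and strictly increasing, so $F_s(B_s)>x$ iff $B_s/\sqrt s>\Phi^{-1}(x)$ and $F_t(B_t)\le x$ iff $B_t/\sqrt t\le\Phi^{-1}(x)$. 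Writing $l:=\Phi^{-1}(x)$, which is negative because $0<x<1/4<1/2$, the event in question is exactly $\{B_t/\sqrt t\le l<\sup_{|s-t|\le\varepsilon}B_s/\sqrt s\}$, and at $s=t$ the supremum cannot exceed $l$, so any crossing occurs in $(t,t+\varepsilon]$ or in $[t-\varepsilon,t)$. Subadditivity then gives
\begin{align*}
\P\big(F_t(B_t)\le x<\sup_{|s-t|\le\varepsilon}F_s(B_s)\big)
&\le \P\big(\tfrac{B_t}{\sqrt t}\le l<\sup_{t<s\le t+\varepsilon}\tfrac{B_s}{\sqrt s}\big)\\
&\quad+\P\big(\tfrac{B_t}{\sqrt t}\le l<\sup_{t-\varepsilon\le s<t}\tfrac{B_s}{\sqrt s}\big).
\end{align*}
The forward term is precisely the quantity estimated in Lemma~\ref{lem:-l} with threshold $\Phi^{-1}(x)$, hence is at most $C\varepsilon^{1/2}(x\ln\tfrac1x)$, which is the first term on the right-hand side of the proposition.

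For the backward term I would reflect and reverse time. Put $\tilde B:=-B$, again a Brownian motion, so the backward event becomes $\{\tilde B_t/\sqrt t\ge -l,\ \inf_{t-\varepsilon\le s<t}\tilde B_s/\sqrt s<-l\}$ with the \emph{positive} threshold $-l=-\Phi^{-1}(x)$, which for the relevant range of $x$ exceeds $m_0$. Viewed with base point $t-\varepsilon$ and $s$ running from $t-\varepsilon$ up to $t$, this is a crossing event of the same type as in Lemma~\ref{lem:l}, but anchored at $t-\varepsilon$ rather than at $t$, with the value constraint now sitting at the right endpoint. Concretely I would condition on $B_{t-\varepsilon}$, use that the forward increments $\{B_s-B_{t-\varepsilon}:t-\varepsilon\le s\le t\}$ are independent of $B_{t-\varepsilon}$, integrate the two constraints (both linear in $B_{t-\varepsilon}$) against the Gaussian density of $B_{t-\varepsilon}$, and apply Borell's concentration inequality (Theorem~\ref{thm:borell}) to the increment supremum, exactly as in the displays of Lemma~\ref{lem:l}. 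Running that scheme with $t$ replaced by the base point $t-\varepsilon$ and window width $\varepsilon$ produces the exponent $\tfrac{(t-\varepsilon)+\varepsilon}{(t-\varepsilon)+2\varepsilon}=\tfrac{t}{t+\varepsilon}$ and the Gaussian factor $\phi(-\Phi^{-1}(x)-m_0)^{t/(t+\varepsilon)}$, while the $\varepsilon^{1/2}$ prefactor comes from the expected-increment bound of Lemma~\ref{lem:m}; this is the second term of the proposition.

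The main obstacle is exactly the backward direction. Unlike the forward increments on $[t,t+\varepsilon]$, the increments $\{B_s-B_t:t-\varepsilon\le s\le t\}$ are correlated with the conditioning variable $B_t$ (their covariance with $B_t$ equals $s-t\ne0$), so one cannot condition on $B_t$ and treat the window increments as independent, which is what makes the forward Lemma~\ref{lem:l} and Lemma~\ref{lem:-l} run cleanly. Anchoring instead at $t-\varepsilon$, where the subsequent increments genuinely are independent of $B_{t-\varepsilon}$, is the device that restores the product structure needed for the Borell estimate; tracking the endpoint constraint at $t$ through this reanchoring is the step requiring the most care, and it is precisely what pins the exponent at $t/(t+\varepsilon)$ and introduces the shift by $m_0$ in the Gaussian factor. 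Combining the two bounds yields the stated inequality.
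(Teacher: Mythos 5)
Your skeleton---rewrite the copula event as a Gaussian one via $F_s(B_s)=\Phi(B_s/\sqrt s)$, split the supremum into the forward window $(t,t+\varepsilon]$ and the backward window $[t-\varepsilon,t)$, bound the forward piece by Lemma~\ref{lem:-l}, and attack the backward piece by reflection and re-anchoring at $t-\varepsilon$---is exactly the paper's, and your forward half is complete. The gap is in the backward half. After reflecting ($\tilde B:=-B$), your event is $\{\inf_{t-\varepsilon\le s<t}\tilde B_s/\sqrt s<-l\le \tilde B_t/\sqrt t\}$: a dip below the level inside the window together with a value constraint at the right endpoint. This is \emph{not} an event of the type treated in Lemma~\ref{lem:l} (anchor below the level, supremum over the window above it), and the computation does not run ``exactly as in the displays of Lemma~\ref{lem:l}''. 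Conditioning on $\tilde B_{t-\varepsilon}/\sqrt{t-\varepsilon}=y$, there are two regimes. If $y<-l$, the dip is automatic, the binding constraint is the endpoint rise, the event is contained in the Lemma~\ref{lem:l} event with base point $t-\varepsilon$, and your $\phi$ term with exponent $t/(t+\varepsilon)$ does come out. But if $y\ge -l$ (unreflected: $B_{t-\varepsilon}/\sqrt{t-\varepsilon}\le\Phi^{-1}(x)$), the endpoint constraint is essentially free and the binding constraint is the dip inside the window; Borell applied to the increment supremum toward the level is the wrong functional there, and the Lemma~\ref{lem:l} scheme produces nothing. You also cannot discard this regime by its marginal probability: it has probability about $x$, which lacks the indispensable factor $\varepsilon^{1/2}$.

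The paper resolves precisely this point by splitting the backward event on the position of $B_{t-\varepsilon}/\sqrt{t-\varepsilon}$ relative to $\Phi^{-1}(x)$ \emph{before} invoking any lemma: the regime $B_{t-\varepsilon}/\sqrt{t-\varepsilon}\le\Phi^{-1}(x)$ is contained in $\{B_{t-\varepsilon}/\sqrt{t-\varepsilon}\le\Phi^{-1}(x)<\sup_{t-\varepsilon<s\le t}B_s/\sqrt s\}$, i.e.\ the Lemma~\ref{lem:-l} event re-anchored at $t-\varepsilon$, contributing a second $C\varepsilon^{1/2}x\ln\tfrac{1}{x}$; the other regime reduces, after dropping the window supremum and reflecting the two-point event $\{B_t/\sqrt t\le\Phi^{-1}(x)<B_{t-\varepsilon}/\sqrt{t-\varepsilon}\}$, to the re-anchored Lemma~\ref{lem:l} event and gives the $\phi$ term; this is why \eqref{eq:II} carries both terms. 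Your claim that the backward piece is bounded by the $\phi$ term alone is, as it happens, true, but only because $x\ln\tfrac{1}{x}\le C\,\phi(-\Phi^{-1}(x)-m_0)^{t/(t+\varepsilon)}$ uniformly on $0<x<1/4$ (the factor $e^{m_0|\Phi^{-1}(x)|}$ eventually beats any power of $\ln\tfrac{1}{x}$); you neither prove this domination nor supply the missing regime, so as written the backward estimate has a hole. The fix is either the paper's case split (and then simply accept both terms, which the stated bound allows) or that extra domination argument. (Your side claim that $-\Phi^{-1}(x)>m_0$ for all $0<x<1/4$ is also not guaranteed, since $m_0$ may exceed $\Phi^{-1}(3/4)$; but the paper's own appeal to Lemma~\ref{lem:l} makes the same implicit assumption, so I do not count it against you.)
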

\begin{proof}
\begin{align*} 
  &\quad \P(F_t(B_t) \leq x <
  \sup_{\{s:|s-t|\leq \varepsilon\}} F_s(B_s)) \\
  &=\P(\Phi(\tfrac{B_t}{\sqrt t}) \leq x <
  \sup_{\{s:|s-t|\leq
    \varepsilon\}} \Phi(\tfrac{B_s}{\sqrt s})) \\
  &=\P(\tfrac{B_t}{\sqrt t} \leq \Phi^{-1}(x) <
  \sup_{\{s:|s-t|\leq \varepsilon\}} \tfrac{B_s}{\sqrt s}) \\
  &\leq \P(\tfrac{B_t}{\sqrt t} \leq \Phi^{-1}(x) <
  \sup_{t<s\leq t+\varepsilon} \tfrac{B_s}{\sqrt s}) +
  \P(\tfrac{B_t}{\sqrt t} \leq \Phi^{-1}(x) <
  \sup_{t-\varepsilon\leq s<t} \tfrac{B_s}{\sqrt s}) \\
  &= I + II.
\end{align*} By Lemma~\ref{lem:-l},
\begin{equation}
  \label{eq:d1_1} I\leq
C\varepsilon^{1/2}(x\ln\tfrac{1}{x}).
\end{equation} Now we consider $II$.
\begin{align*} II&=\P(\tfrac{B_t}{\sqrt t} \leq
  \Phi^{-1}(x)
  < \sup_{t-\varepsilon\leq s<t} \tfrac{B_s}{\sqrt s})\\
  &=\P(\tfrac{B_{t-\varepsilon}}{\sqrt{t-\varepsilon}}\leq
  \Phi^{-1}(x),\ \tfrac{B_t}{\sqrt t} \leq \Phi^{-1}(x) <
  \sup_{t-\varepsilon\leq s<t} \tfrac{B_s}{\sqrt s})\\
  &\qquad
  +\P(\tfrac{B_{t-\varepsilon}}{\sqrt{t-\varepsilon}} >
  \Phi^{-1}(x),\ \tfrac{B_t}{\sqrt t} \leq \Phi^{-1}(x) <
  \sup_{t-\varepsilon\leq s<t} \tfrac{B_s}{\sqrt s})\\
  &\leq \P(\tfrac{B_{t-\varepsilon}}{\sqrt{t-\varepsilon}}
  \leq \Phi^{-1}(x) < \sup_{t-\varepsilon<s\leq t}
  \tfrac{B_s}{\sqrt s}) + \P(\tfrac{B_t}{\sqrt t}\leq
  \Phi^{-1}(x) <
  \tfrac{B_{t-\varepsilon}}{\sqrt{t-\varepsilon}})\\
  &\leq \P(\tfrac{B_{t-\varepsilon}}{\sqrt{t-\varepsilon}}
  \leq \Phi^{-1}(x) < \sup_{t-\varepsilon<s\leq t}
  \tfrac{B_s}{\sqrt s}) +
  \P(\tfrac{B_{t-\varepsilon}}{\sqrt{t-\varepsilon}}\leq
  -\Phi^{-1}(x) < \tfrac{B_t}{\sqrt t})\\
  &\leq C\varepsilon^{1/2}(x\ln\tfrac{1}{x}) +
  C\varepsilon^{1/2}\phi(-\Phi^{-1}(x)-m_0)^{\tfrac{t}{t+\varepsilon}}
  \text{\quad by Lemmas~\ref{lem:-l} and
    \ref{lem:l}}. \numberthis\label{eq:II}
\end{align*}
\end{proof}
\begin{proposition}\label{prop:d2} 
  For $1\leq t\leq 2$, $0<\varepsilon\leq 1/2$, there is a
  universal constant $C$ such that for $0<x<1/4$
  \[ \P (\inf_{\{s: |s-t|\leq \varepsilon\}} F_s(B_s)\leq x
  <F_t(B_t)) \leq C\varepsilon^{1/2} \phi(-\Phi^{-1}(x)
  -m_0)^{\tfrac{t}{t+\varepsilon}} +
  C\varepsilon^{1/2}(x\ln\tfrac{1}{x}).
\]
\end{proposition}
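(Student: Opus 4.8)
The plan is to reduce this proposition to the companion estimate of Proposition~\ref{prop:d1} by exploiting the reflection symmetry of Brownian motion. Since $F_s(x)=\Phi(x/\sqrt s)$, the event in question is $\{\inf_{|s-t|\le\varepsilon}\tfrac{B_s}{\sqrt s}\le \Phi^{-1}(x)<\tfrac{B_t}{\sqrt t}\}$. Writing $\hat B:=-B$, which is again a standard Brownian motion, and setting $l:=-\Phi^{-1}(x)=\Phi^{-1}(1-x)>0$ (note $l>0$ precisely because $0<x<1/4<1/2$), one checks that this event has the same probability as
\[
\P\Big(\tfrac{\hat B_t}{\sqrt t}<l\le \sup_{|s-t|\le\varepsilon}\tfrac{\hat B_s}{\sqrt s}\Big).
\]
Thus, up to the harmless sign change of the threshold, the quantity to be bounded is of exactly the same shape as the one treated in Proposition~\ref{prop:d1}, but now with the positive level $l$ in place of $\Phi^{-1}(x)$; this is what makes the tail factor $\phi(-\Phi^{-1}(x)-m_0)$ appear.

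Next I would split the supremum over $|s-t|\le\varepsilon$ into the forward block $s\in(t,t+\varepsilon]$ and the backward block $s\in[t-\varepsilon,t)$, giving two terms $J_1$ and $J_2$. For the forward block $J_1=\P(\tfrac{\hat B_t}{\sqrt t}<l\le \sup_{t<s\le t+\varepsilon}\tfrac{\hat B_s}{\sqrt s})$ I would apply Lemma~\ref{lem:l} directly (legitimate once $l>m_0$), obtaining $J_1\le C\varepsilon^{1/2}\phi(l-m_0)^{(t+\varepsilon)/(t+2\varepsilon)}$; since $\phi\le(2\pi)^{-1/2}<1$ and $(t+\varepsilon)^2\ge t(t+2\varepsilon)$ forces $(t+\varepsilon)/(t+2\varepsilon)\ge t/(t+\varepsilon)$, the exponent may be lowered to $t/(t+\varepsilon)$, so $J_1\le C\varepsilon^{1/2}\phi(-\Phi^{-1}(x)-m_0)^{t/(t+\varepsilon)}$, the first term of the claimed bound.

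For the backward block $J_2$ I would mirror the treatment of the term $II$ in the proof of Proposition~\ref{prop:d1}, conditioning on the left endpoint $\hat B_{t-\varepsilon}/\sqrt{t-\varepsilon}$ relative to $l$. On $\{\hat B_{t-\varepsilon}/\sqrt{t-\varepsilon}<l\}$ the event forces an upcrossing of $l$ starting at time $t-\varepsilon$, so it is dominated by $\P(\tfrac{\hat B_{t-\varepsilon}}{\sqrt{t-\varepsilon}}<l\le \sup_{t-\varepsilon<s\le t}\tfrac{\hat B_s}{\sqrt s})$, to which Lemma~\ref{lem:l} with base time $t-\varepsilon$ applies and yields $C\varepsilon^{1/2}\phi(l-m_0)^{t/(t+\varepsilon)}$ (the base-time shift turns the exponent into exactly $t/(t+\varepsilon)$). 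On $\{\hat B_{t-\varepsilon}/\sqrt{t-\varepsilon}\ge l\}$ the event is contained in $\{\tfrac{\hat B_t}{\sqrt t}<l\le \tfrac{\hat B_{t-\varepsilon}}{\sqrt{t-\varepsilon}}\}$; reflecting once more and recalling $-l=\Phi^{-1}(x)$ turns this into $\P(\tfrac{B_{t-\varepsilon}}{\sqrt{t-\varepsilon}}\le\Phi^{-1}(x)<\tfrac{B_t}{\sqrt t})$ for a standard Brownian motion $B$, which is dominated by the upcrossing probability $\P(\tfrac{B_{t-\varepsilon}}{\sqrt{t-\varepsilon}}\le\Phi^{-1}(x)<\sup_{t-\varepsilon<s\le t}\tfrac{B_s}{\sqrt s})$ and hence bounded by Lemma~\ref{lem:-l} (base time $t-\varepsilon$) by $C\varepsilon^{1/2}(x\ln\tfrac1x)$. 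Adding $J_1$ and the two pieces of $J_2$ gives the asserted estimate.

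The bookkeeping just described hinges on two points I would be careful about. First, every use of Lemma~\ref{lem:l} and Lemma~\ref{lem:-l} is at base time $t-\varepsilon$, which lies in $[\tfrac12,2]$ rather than $[1,2]$; since their constants are uniform over base times bounded away from $0$, exactly as already used for the term $II$ in Proposition~\ref{prop:d1}, this is harmless. The genuine obstacle is the hypothesis $l>m_0$ required by Lemma~\ref{lem:l}: for $x$ close to $1/4$ one may have $l=-\Phi^{-1}(x)\le m_0$, so that regime must be treated separately. There, however, $-\Phi^{-1}(x)-m_0$ ranges over a bounded set, so $\phi(-\Phi^{-1}(x)-m_0)^{t/(t+\varepsilon)}$ is bounded below by a positive constant, while the event forces $B_s/\sqrt s$ to cross a fixed level over a window of length $\varepsilon$, an event of probability $O(\varepsilon^{1/2})$ by the reflection principle and Brownian scaling; thus the trivial bound $O(\varepsilon^{1/2})$ is absorbed into $C\varepsilon^{1/2}\phi(-\Phi^{-1}(x)-m_0)^{t/(t+\varepsilon)}$, completing the proof in all cases.
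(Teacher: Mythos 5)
Your proof follows essentially the same route as the paper's: reflect $B\mapsto -B$ to turn the infimum event into a supremum event at level $l=-\Phi^{-1}(x)$, split into the forward window (Lemma~\ref{lem:l} applied directly, with the exponent $\tfrac{t+\varepsilon}{t+2\varepsilon}$ lowered to $\tfrac{t}{t+\varepsilon}$ since $\phi<1$) and the backward window (split on the position of $B_{t-\varepsilon}/\sqrt{t-\varepsilon}$ relative to the level, then Lemma~\ref{lem:l} at base time $t-\varepsilon$ for one piece and, after one more reflection, Lemma~\ref{lem:-l} for the other), exactly as in the paper. Your additional care about the hypothesis $l>m_0$ of Lemma~\ref{lem:l} --- which the paper invokes without comment --- and your absorption of the regime $l\le m_0$ into the main term via a trivial $O(\varepsilon^{1/2})$ crossing bound are sound refinements of, not departures from, that argument.
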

\begin{proof} First we consider the case $\{s>t: |s-t|\leq
\varepsilon\}$.  Let $D= \sup_{t<s\leq t+\varepsilon}
\tfrac{B_s}{\sqrt s} - \tfrac{B_t}{\sqrt s} $.
\begin{align*} 
  &\P (\inf_{t<s\leq t+\varepsilon} F_s(B_s)
  \leq x <F_t(B_t)) \\
  &= \P(\inf_{t<s\leq t+\varepsilon} \tfrac{B_s}{\sqrt s}
  \leq \Phi^{-1}(x)< \tfrac{B_t}{\sqrt t
  }) \\
  &= \P(\tfrac{B_t}{\sqrt t } < -\Phi^{-1}(x)\leq
  \sup_{t<s\leq t+\varepsilon} \tfrac{B_s}{\sqrt s}) \\
  &\leq C \varepsilon^{1/2} \phi(-\Phi(x)
  -m_0)^{\tfrac{t+\varepsilon}{t+2\varepsilon}}\quad \text{by
    Lemma~\ref{lem:l}}.
\end{align*} For the the case $\{s<t: |s-t|\leq
\varepsilon\}$,
\begin{align*} &\P (\inf_{t-\varepsilon\leq s<t} F_s(B_s)
\leq x <F_t(B_t)) \\
&= \P(\tfrac{B_t}{\sqrt t } <
-\Phi^{-1}(x)\leq \sup_{t-\varepsilon\leq s<t}
\tfrac{B_s}{\sqrt s}) \\ &=
\P(\tfrac{B_{t-\varepsilon}}{\sqrt{t-\varepsilon}} <
-\Phi^{-1}(x), \tfrac{B_t}{\sqrt t } < -\Phi^{-1}(x)\leq
\sup_{t-\varepsilon\leq s<t} \tfrac{B_s}{\sqrt s}) \\ &\quad
+ \P(\tfrac{B_{t-\varepsilon}}{\sqrt{t-\varepsilon}} \geq
-\Phi^{-1}(x), \tfrac{B_t}{\sqrt t } < -\Phi^{-1}(x)\leq
\sup_{t-\varepsilon\leq s<t} \tfrac{B_s}{\sqrt s}) \\ &=
\P(\tfrac{B_{t-\varepsilon}}{\sqrt{t-\varepsilon}} <
-\Phi^{-1}(x) \leq \sup_{t-\varepsilon\leq s<t}
\tfrac{B_s}{\sqrt s}) + \P(\tfrac{B_t}{\sqrt t }<
-\Phi^{-1}(x) \leq
\tfrac{B_{t-\varepsilon}}{\sqrt{t-\varepsilon}})\\ &=
\P(\tfrac{B_{t-\varepsilon}}{\sqrt{t-\varepsilon}} <
-\Phi^{-1}(x) \leq \sup_{t-\varepsilon\leq s<t}
\tfrac{B_s}{\sqrt s}) + \P(
\tfrac{B_{t-\varepsilon}}{\sqrt{t-\varepsilon}}\leq
\Phi^{-1}(x)<\tfrac{B_t}{\sqrt t } )\\
&\leq C
\varepsilon^{1/2} \phi(-\Phi(x)
-m_0)^{\tfrac{t}{t+\varepsilon}}
+C\varepsilon^{1/2}(x\ln\tfrac{1}{x}) \text{ by Lemmas~\ref{lem:l}
and \ref{lem:-l} }. \qedhere
\end{align*}
\end{proof}
\begin{proof}[Proof of Theorem~\ref{thm:ex}] Let
  $0<\varepsilon<1/2 $ and $1\leq t\leq 2 $.  Choose
  $\theta>4$ big enough such that
  $\tfrac{t}{t+\varepsilon^\theta}>2\alpha$ uniformly in $t$
  and $\varepsilon$.  Let $\rho(s,t)=|s-t|^{1/\theta}$. Then
  $\rho(s,t)$ is a continuous Gaussian metric on $[0,1]$
  (indeed it is the $L_2$ distance of the fractional
  Brownian motion with Hurst index ${1/\theta}$).  By
  Lemmas~\ref{lem:sen}, \ref{lem:L}, and \ref{lem:y}, it
  follows that for $0<x<1/4$ (for $1/4\leq x\leq 1/2$, the proof is trivial as $w(\cdot)$ is uniformly bounded on it)
\[ \phi(-\Phi^{-1}(x)
-m_0)^{\tfrac{t}{t+\varepsilon^\theta}} \leq
[CxL_C(x)]^{\tfrac{t}{t+\varepsilon^\theta}} \leq
Cx^{2\alpha}/L(x)=\frac{C}{w(x)^2}.
\] Hence Propositions \ref{prop:d1} and \ref{prop:d2}
verify the WL-condition in Theorem~\ref{thm:main1} and
Lemma~\ref{lem:envelope} verifies the envelope function
condition therein. Hence by part ($i$) of Proposition~\ref{prop:X2Y} and
noting the distribution functions $F_t$ of $B_t$ are
strictly increasing, we conclude the proof.
\end{proof}

\section*{Appendix}
In this appendix, we give the proof of Lemma~\ref{lem:extension}.
\begin{proof}[Proof of Lemma~\ref{lem:extension}]
  We denote the restricted process $\{G(t):t\in T_0\}$ by $G_0$. Then
  almost surely its sample paths are uniformly continuous on $T_0$.
  Each sample path can be extended to a uniformly continuous sample
  path on $T$. Indeed, if we let $G_0(\omega)$ be a sample path and $t\in
  T$, then there is a sequence, say $(t_m)\subset T_0$, such that
  $d_G(t_m, t)\rightarrow 0$ as $m\to \infty $ and define $\tilde
  G(t)(\omega):=\lim_{m\to \infty} G(t_m)(\omega)$.  It's easy to see
  it's well defined and is uniformly $d_G$ continuous on $T$. Moreover, in view of
  its characteristic function, $\tilde G(t)$ is normal. Let
  $\tilde \rho $ be the covariance of $\tilde G$.  It remains to show
  $\rho=\tilde \rho $.  But that $\rho$ and $\tilde \rho$ coincide on
  $T_0\times T_0$
  implies they
  coincide on $T\times T$. Indeed, for any $s,t\in T$, we can find
  a sequences $(s_m)$ and $(t_m)$ in $T_0$, such that
  $d_G(s_m, s)\rightarrow 0$ and
  $d_G(s_t, t)\rightarrow 0$.
  Then $|\rho(s,t) - \rho(s_m, t_m)|\rightarrow 0$.
\end{proof}
\begin{ack}
The author would like to thank Prof. J. Zinn for helpful suggestions and the referees
for careful reading of the paper and the comments, which have led to many improvements.
\end{ack}
\bibliographystyle{plain}

\def\cprime{$'$}

\end{document}